\documentclass[11pt, twoside, leqno]{article}

\usepackage{amssymb}
\usepackage{amsmath}
\usepackage{amsthm}
\usepackage{color}
\usepackage{mathrsfs}
\usepackage{txfonts}

\allowdisplaybreaks

\pagestyle{myheadings}\markboth{\footnotesize\rm\sc
Jun Liu, Dachun Yang and Wen Yuan}
{\footnotesize\rm\sc Anisotropic Variable Hardy-Lorentz Spaces}

\textwidth=15cm
\textheight=21cm
\oddsidemargin 0.35cm
\evensidemargin 0.35cm

\parindent=13pt

\def\rr{{\mathbb R}}
\def\rn{{\mathbb{R}^n}}
\def\zz{{\mathbb Z}}
\def\cc{{\mathbb C}}
\def\nn{{\mathbb N}}
\def\cl{{\mathcal L}}
\def\cp{{\mathcal P}}
\def\cs{{\mathcal S}}
\def\cx{{\mathcal X}}

\def\fz{\infty }
\def\az{\alpha}
\def\bz{\beta}
\def\dz{\delta}
\def\lz{\lambda}

\def\lf{\left}
\def\r{\right}
\def\hs{\hspace{0.25cm}}
\def\ls{\lesssim}

\def\noz{\nonumber}
\def\wz{\widetilde}
\def\st{\subset}
\def\com{\complement}

\def\dis{\displaystyle}
\def\loc{{\mathop\mathrm{\,loc\,}}}
\def\supp{\mathop\mathrm{\,supp\,}}

\def\HL{M_{{\rm HL}}}
\def\q1{\wz q}

\def\vlpq{{L^{p(\cdot),q}(\rn)}}
\def\vh{{H_A^{p(\cdot)}(\rn)}}
\def\vhlpq{{H_A^{p(\cdot),q}(\rn)}}
\def\vahlpq{{H_A^{p(\cdot),r,s,q}(\rn)}}
\def\lfz{{L^{\fz}(\rn)}}
\def\lv{{L^{p(\cdot)}(\rn)}}
\def\Bik{{B_i^k}}
\def\Qik{{Q_i^k}}
\def\lik{{\lz_i^k}}
\def\aik{{a_i^k}}
\def\xik{{x_i^k}}

\newtheorem{theorem}{Theorem}[section]
\newtheorem{lemma}[theorem]{Lemma}
\newtheorem{corollary}[theorem]{Corollary}
\newtheorem{proposition}[theorem]{Proposition}

\theoremstyle{definition}
\newtheorem{remark}[theorem]{Remark}
\newtheorem{definition}[theorem]{Definition}
\renewcommand{\appendix}{\par
   \setcounter{section}{0}%
   \setcounter{subsection}{0}%
   \setcounter{subsubsection}{0}%
   \gdef\thesection{\@Alph\c@section}%
   \gdef\thesubsection{\@Alph\c@section.\@arabic\c@subsection}%
   \gdef\theHsection{\@Alph\c@section.}%
   \gdef\theHsubsection{\@Alph\c@section.\@arabic\c@subsection}%
   \csname appendixmore\endcsname
 }

\numberwithin{equation}{section}

\begin{document}

\arraycolsep=1pt

\title{\bf\Large Anisotropic Variable Hardy-Lorentz Spaces
and Their Real Interpolation
\footnotetext{\hspace{-0.35cm} 2010 {\it
Mathematics Subject Classification}. Primary  42B35;
Secondary 46E30, 42B30, 42B25, 46B70.
\endgraf {\it Key words and phrases.} variable exponent,
(Hardy-)Lorentz space, expansive matrix, atom,
Lusin area function, real interpolation.
\endgraf This project is supported by the National
Natural Science Foundation of China
(Grant Nos.~11571039, 11671185 and 11471042).}}
\author{Jun Liu, Dachun Yang\footnote{Corresponding author / April 26, 2017.}\ \
and Wen Yuan}
\date{}
\maketitle

\vspace{-0.8cm}

\begin{center}
\begin{minipage}{13cm}
{\small {\bf Abstract}\quad
Let $p(\cdot):\ \mathbb R^n\to(0,\infty)$ be a variable exponent
function satisfying the globally log-H\"{o}lder continuous condition,
$q\in(0,\infty]$ and $A$ be a general expansive matrix on $\mathbb{R}^n$.
In this article, the authors first introduce the anisotropic variable
Hardy-Lorentz space $H_A^{p(\cdot),q}(\mathbb R^n)$ associated
with $A$, via the radial grand maximal function, and then establish its
radial or non-tangential maximal function characterizations. Moreover,
the authors also obtain characterizations of
$H_A^{p(\cdot),q}(\mathbb R^n)$, respectively, in terms of the atom
and the Lusin area function. As an application, the authors
prove that the anisotropic variable Hardy-Lorentz space
$H_A^{p(\cdot),q}(\mathbb R^n)$ severs as the intermediate space
between the anisotropic variable Hardy space $H_A^{p(\cdot)}(\mathbb R^n)$
and the space $L^\infty(\mathbb R^n)$ via the real interpolation.
This, together with a special case of the real interpolation theorem of H. Kempka and J. Vyb\'iral
on the variable Lorentz space, further implies the coincidence
between $H_A^{p(\cdot),q}(\mathbb R^n)$ and the variable Lorentz space $L^{p(\cdot),q}(\mathbb R^n)$
when $\mathop\mathrm{essinf}_{x\in\mathbb{R}^n}p(x)\in (1,\infty)$.

}
\end{minipage}
\end{center}

\section{Introduction\label{s1}}
\hskip\parindent
As a generalization of the classical Lebesgue spaces $L^p(\rn)$,
the variable Lebesgue spaces $\lv$, in which the constant
exponent $p$ is replaced by an exponent function
$p(\cdot):\ \rn\to(0,\fz)$, were studied by Musielak \cite{m83}
and Nakano \cite{n50,n51}, which can be traced back to Orlicz \cite{o31,o32}.
But the modern theory of function spaces with variable exponents was started with the articles \cite{kr91} of
Kov\'a\v{c}ik and R\'akosn\'{\i}k  and \cite{fz01} of Fan and Zhao
as well as \cite{cruz03} of Cruz-Uribe and \cite{din04} of Diening,
and nowadays has been widely used in harmonic analysis
(see, for example, \cite{cf13,dhhr11,ylk17}). In addition, the theory of
variable function spaces also has interesting applications
in fluid dynamics \cite{am02}, image processing \cite{clr06},
partial differential equations and variational calculus \cite{am05,f07,hhv08,su09}.

Recently, Nakai and Sawano \cite{ns12} and, independently,
Cruz-Uribe and Wang \cite{cw14} with
some weaker assumptions on $p(\cdot)$ than those used in \cite{ns12},
extended the theory of variable Lebesgue spaces via investigating
the variable Hardy spaces on $\rn$. Later, Sawano \cite{s10},
Zhuo et al. \cite{zyl16} and Yang et al. \cite{yzn16} further completed
the theory of these variable Hardy spaces. For more developments of
function spaces with variable exponents, we refer the reader to
\cite{ah10,dhr09,kv14,n16,ns2012,v09,xu08,xu2008,yyyz16,yzy15}
and their references. In particular, Kempka and Vyb\'iral \cite{kv14}
introduced the variable Lorentz spaces which were a generalization of both
the variable Lebesgue spaces and the classical Lorentz spaces and obtained some
basic properties of these spaces including several embedding conclusions.
The real interpolation result that the variable Lorentz space serves as the
intermediate space between the variable Lebesgue space $\lv$ and the space
$\lfz$ was also presented in \cite{kv14}.

Very recently, Yan et al. \cite{yyyz16}
first introduced the variable weak Hardy spaces on $\rn$ and established
various real-variable characterizations of these spaces; as application,
the boundedness of some Calder\'{o}n-Zygmund operators in the critical
case was also presented. Based on these results, via establishing a very
interesting decomposition for any distribution of the variable weak Hardy space,
Zhuo et al. \cite{zyy17} proved the following real interpolation theorem between
the variable Hardy space $H^{p(\cdot)}(\mathbb R^n)$
and the space $L^{\infty}(\mathbb R^n)$:
\begin{equation}\label{e1.1}
(H^{p(\cdot)}(\mathbb R^n),L^{\infty}(\mathbb R^n))_{\theta,\infty}=
W\!H^{p(\cdot)/(1-\theta)}(\mathbb R^n),\quad \theta\in(0,1),
\end{equation}
where $W\!H^{p(\cdot)/(1-\theta)}(\mathbb R^n)$ denotes the variable
weak Hardy space and $(\cdot,\cdot)_{\theta,\fz}$ the real interpolation.

As was well known, Fefferman et al. \cite{frs74} showed that the Hardy-Lorentz
space $H^{p,q}(\rn)$ was actually the intermediate space between the classical Hardy
space $H^{p}(\rn)$ and the space $\lfz$ under the real interpolation, which is
the main motivation to develop the real-variable theory of $H^{p,q}(\rn)$.
Thus, it is natural and interesting  to ask whether or not the variable Hardy-Lorentz
space also serves as the intermediate space between the variable Hardy space
$H^{p(\cdot)}(\rn)$ and the space $\lfz$ via the real interpolation, namely,
if $(\cdot,\cdot)_{\theta,\fz}$ in \eqref{e1.1} is replaced by $(\cdot,\cdot)_{\theta,q}$
with $q\in (0,\fz]$, what happens?

On the other hand, as the series of works
(see, for example, \cite{at07,ac10, a94, frs74,fs87,l91,p05}) reveal,
the Hardy-Lorentz spaces (as well the weak Hardy spaces)
serve as a more subtle research object than the usual Hardy
spaces when studying the boundedness of singular integrals, especially,
in some critical cases, due to the fact that these function spaces own
finer structures. Moreover, after the celebrated
articles \cite{c77,ct75,ct77} of Calder\'{o}n and Torchinsky on parabolic
Hardy spaces, there has been an enormous interest in extending classical
function spaces arising in harmonic analysis from Euclidean spaces to
some more general underlying spaces; see, for example,
\cite{ds16,fs82,hmy06,hmy08,st87,s13,s16,t83,t92,yyh13}. The function spaces in the
anisotropic setting have proved of wide generality (see, for example,
\cite{mb03,mb07,bh06}), which include the classical isotropic spaces and the parabolic spaces as
special cases. For more progresses about this theory, we refer the reader to
\cite{lby14,lbyy12,lyy16,lyy16LP,lyy17,fhly17,t06,t15}
and their references. In particular, the authors recently introduced
the anisotropic Hardy-Lorentz spaces $H_A^{p,q}(\rn)$,
associated with some dilation $A$, and obtained their various real-variable
characterizations (see \cite{lyy16,lyy16LP}). Also,
very recently, Zhuo et al. \cite{zsy16} developed the real-variable theory
of the variable Hardy space $H^{p(\cdot)}(\cx)$ on an RD-space $\cx$.
Recall that a metric measure space of homogeneous type $\cx$ is called an RD-space if
it is a metric measure space of homogeneous type in the sense of Coifman and Weiss \cite{cw71,cw77}
and satisfies some reverse doubling property, which was originally introduced
by Han et al. \cite{hmy08} (see also \cite{zy11} for some equivalent
characterizations).

To further study the intermediate space between the variable Hardy space
$H^{p(\cdot)}(\rn)$ and the space $\lfz$ via the real interpolation and
also to give a complete theory of variable Hardy-Lorentz spaces in anisotropic
setting, in this article, we first introduce the anisotropic variable Hardy-Lorentz
space, via the radial grand maximal function, and then establish its several
real-variable characterizations, respectively,
in terms of the atom, the radial or the non-tangential
maximal functions, and the Lusin area function. As an application, we prove
that the anisotropic variable Hardy-Lorentz space $\vhlpq$ severs as the
intermediate space between the anisotropic variable Hardy space
$H_A^{p(\cdot)}(\rn)$ and the space $\lfz$ via the real interpolation.
This, together with a special case of the real interpolation theorem of
Kempka and Vyb\'iral in \cite{kv14} on the variable Lorentz space, further implies the coincidence
between $H_A^{p(\cdot),q}(\mathbb R^n)$ and the variable Lorentz space $L^{p(\cdot),q}(\mathbb R^n)$
when $\mathop\mathrm{essinf}_{x\in\mathbb{R}^n}p(x)\in (1,\infty)$.

To be precise, this article is organized as follows.

In Section \ref{s2}, we first recall some notation and notions on
Euclidean spaces, with anisotropic dilations, and variable
Lebesgue spaces as well as some basic properties of these spaces to be
used in this article. Then we introduce the anisotropic variable
Hardy-Lorentz space $\vhlpq$ via the radial grand maximal function.

Section \ref{s3} is aimed to characterize $\vhlpq$ by means of
the radial or the non-tangential maximal functions (see Theorem \ref{tt1} below).
To this end, via the Aoki-Rolewicz theorem (see \cite{ta42, sr57}),
we first prove that the $\vlpq$ quasi-norm of the tangential maximal
function $T^{N(K,L)}_\phi(f)$ can be controlled by that of the
non-tangential maximal function $M^{(K,L)}_\phi(f)$ for all
$f\in\cs'(\rn)$ (see Lemma \ref{tl1} below), where $K$ is the
truncation level, $L$ is the decay level and $\cs'(\rn)$ denotes
the set of all tempered distributions on $\rn$. Then, by the
boundedness of the Hardy-Littlewood maximal function as in \eqref{se5} below
on $\lv$ (see Lemma \ref{sl1} below) with
$p(\cdot)$ satisfying the so-called globally log-H\"older continuous
condition (see \eqref{se6} and \eqref{se7} below) and
$1<p_-\le p_+<\fz$, where $p_-$ and $p_+$ are as in \eqref{se3} below,
we obtain the boundedness of the Hardy-Littlewood maximal function
on $\vlpq$ (see Lemma \ref{sl7} below) with $p(\cdot)$ satisfying
the same condition as that in Lemma \ref{sl1} and $q\in(0,\fz]$.
We point out that the monotone convergence property for increasing
sequences on $\vlpq$ (see Proposition \ref{sl6} below) as well as Lemmas
\ref{sl1} and \ref{sl7} play a key role in proving Theorem \ref{tt1}.

In Section \ref{s4}, via borrowing some ideas from
\cite[Theorem 3.6]{lyy16} and \cite[Theorem 4.4]{yyyz16},
we establish the atomic characterization of $\vhlpq$. Indeed, we first introduce
the anisotropic variable atomic Hardy-Lorentz space $\vahlpq$ in
Definition \ref{fd2} below and then prove
$$\vhlpq=\vahlpq$$
with equivalent quasi-norms (see Theorem \ref{ft1} below). To prove
that $\vahlpq$ is continuously embedded into $\vhlpq$, motivated by
\cite[Lemma 4.1]{s10}, we first conclude that some estimates related to
$L^{p(\cdot)}(\rn)$ norms for some series of functions can be reduced
into dealing with the $L^r(\rn)$ norms of the corresponding functions
(see Lemma \ref{fl2} below), which actually is an anisotropic version of
\cite[Lemma 4.1]{s10}. Then, by using this key lemma and the Fefferman-Stein
vector-valued inequality of the Hardy-Littlewood maximal operator $\HL$ on
$\lv$ (see Lemma \ref{fl1} below), we prove that $\vahlpq\st\vhlpq$ and
the inclusion is continuous.
The method used in the proof for the converse embedding is different from
that used in the proof for the corresponding embedding of variable Hardy spaces
$H^{p(\cdot)}(\rn)$ (or, resp., anisotropic Hardy spaces $H_A^p(\rn)$).
Recall that $L_\loc^1(\rn)\cap H^{p(\cdot)}(\rn)$
(or, resp., $L_\loc^1(\rn)\cap H_A^p(\rn)$) is dense in $H^{p(\cdot)}(\rn)$
(or, resp., $H_A^p(\rn)$), which plays a key role in the atomic decomposition
of $H^{p(\cdot)}(\rn)$ (or, resp., $H_A^p(\rn)$). However, this standard procedure
is invalid for the space $H_A^{p(\cdot),\fz}(\rn)$, due to its lack of a dense function subspace.
To overcome this difficulty, we borrow some ideals from \cite[Theorem 3.6]{lyy16}
(see also \cite{dl08}), in which the authors directly obtained an atomic decomposition
for convolutions of distributions in $H_A^{p(\cdot),\fz}(\rn)$ and Schwartz functions instead
of some dense function subspace.

As an application of the atomic characterization of $\vhlpq$ obtained in
Theorem \ref{ft1}, in Section \ref{s5}, we establish the Lusin area function
characterization of $\vhlpq$ (see Theorem \ref{fivet1} below).
In the proof of Theorem \ref{fivet1}, the anisotropic Calder\'{o}n reproducing
formula and the method used in the proof of the atomic characterization
of $\vhlpq$ play a key role. However, when we decompose a distribution
into a sum of atoms, the dual method used in estimating the norm of each atom in the
classic case does not work anymore in the present setting. Instead,
a strategy, used in \cite{lyy16LP}, originated from Fefferman \cite{f88}, that
obtains a subtle estimate (see, for example, \cite[(3.23)]{lyy16LP}) plays a
key role here; see the estimate \eqref{five11} below.

In Section \ref{s6}, as another application of the atomic characterization of
$\vhlpq$, we prove the following real interpolation result between the
anisotropic variable Hardy space $H_A^{p(\cdot)}(\rn)$ and the space $\lfz$:
\begin{equation}\label{e1.2}
(H_A^{p(\cdot)}(\mathbb R^n),L^{\infty}(\mathbb R^n))_{\theta,q}=
H_A^{p(\cdot)/(1-\theta),q}(\mathbb R^n),\quad \theta\in(0,1),\ q\in(0,\fz]
\end{equation}
(see Theorem \ref{sixt1} below). To prove this result, via borrowing some ideas from
\cite{zyy17}, we first obtain a decomposition for any distribution of the anisotropic
variable Hardy-Lorentz space into ``good" and ``bad" parts (see Lemma \ref{sixl1}
below), which is of independent interest. We point out that, as a special case of
\cite[Theorem 4.3(i)]{zsy16}, we know that the atomic characterization of
$\vh$ holds true. This, together with the vector-valued inequality of the
Hardy-Littlewood maximal function on the variable Lebesgue space
(see Lemma \ref{fl1} below), plays a key role in the proof of Lemma \ref{sixl1}.
Applying \eqref{e1.2}, together with \cite[Corollary 4.20]{zsy16} on the coincidence
between $H_A^{p(\cdot)}(\mathbb R^n)$ and $L^{p(\cdot)}(\mathbb R^n)$
as well as \cite[Remark 4.2(ii)]{kv14}, we further obtain the coincidence
between $H_A^{p(\cdot),q}(\mathbb R^n)$ and $L^{p(\cdot),q}(\mathbb R^n)$
when $\mathop\mathrm{essinf}_{x\in\mathbb{R}^n}p(x)\in (1,\infty)$;
see Corollary \ref{sixc1} below.

We should point out that, if $A:=d\,{\rm I}_{n\times n}$ for some
$d\in\rr$ with $|d|\in(1,\fz)$, here and hereafter,
${\rm I}_{n\times n}$ denotes the $n\times n$
\emph{unit matrix} and $|\cdot|$ denotes the
Euclidean norm in $\rn$, then the space $\vhlpq$ becomes the classical
isotropic variable Hardy-Lorentz space. In this case, the results in this article
are also independently obtained by Jiao et al. \cite{jzz17} via some slight
different methods.

Finally, we make some conventions on notation.
Throughout this article, we always let
$\mathbb{N}:=\{1,2,\ldots\}$ and
$\mathbb{Z}_+:=\{0\}\cup\mathbb{N}$.
For any multi-index
$\beta:=(\beta_1,\ldots,\beta_n)\in\mathbb{Z}_+^n$,
let
$|\beta|:=\beta_1+\cdots+\beta_n$.
We denote by $C$ a \emph{positive constant}
which is independent of the main parameters,
but its value may change from line to line. Moreover,
we use $f\ls g$ to denote $f\leq Cg$ and, if $f\ls g\ls f$,
we then write $f\sim g$. For any $q\in[1,\fz]$, we denote by $q'$
its \emph{conjugate index}, namely, $1/q+1/q'=1$. In addition,
for any set $E\subset\rn$, we denote by $E^\complement$ the
set $\rn\setminus E$, by $\chi_E$ its \emph{characteristic function}
and by $\sharp E$ the \emph{cardinality} of $E$. The symbol $\lfloor s\rfloor$,
for any $s\in\mathbb{R}$, denotes the \emph{largest integer not greater than $s$}.

\section{Preliminaries \label{s2}}
\hskip\parindent
In this section, we introduce the anisotropic variable Hardy-Lorentz space
via the radial grand maximal function. To this end, we first recall some
notation and notions on spaces of homogeneous type associated with dilations
and variable Lebesgue spaces as well as some basic conclusions of these spaces
to be used in this article. For an exposition of these concepts,
we refer the reader to the monographs
\cite{mb03,cf13,dhhr11}.

We begin with recalling the notion of expansive matrices
in \cite{mb03}.

\begin{definition}\label{d-em}
A real $n\times n$ matrix $A$ is called an \emph{expansive matrix}
(shortly, a \emph{dilation}) if
$$\min_{\lz\in\sigma(A)}|\lz|>1,$$
here and hereafter, $\sigma(A)$ denotes the \emph{collection of
all eigenvalues of $A$}.
\end{definition}

Throughout this article,
$A$ always denotes a fixed dilation
and $b:=|\det A|$. Then we easily find that
$b\in(1,\fz)$ by \cite[p.\,6, (2.7)]{mb03}.
Let $\lambda_-$ and $\lambda_+$
be two \emph{positive numbers} satisfying that
$$1<\lambda_-<\min\{|\lambda|:\
\lambda\in\sigma(A)\}
\leq\max\{|\lambda|:\
\lambda\in\sigma(A)\}<\lambda_+.$$
In the case when $A$ is diagonalizable over
$\mathbb{C}$, we can even take
$\lambda_-:=\min\{|\lambda|:\
\lambda\in\sigma(A)\}$
and
$\lambda_+:=\max\{|\lambda|:\
\lambda\in\sigma(A)\}$.
Otherwise, we need to choose them
sufficiently close to these equalities
according to what we need in the arguments below.

It was proved in \cite[p.\,5, Lemma 2.2]{mb03} that,
for a given dilation $A$, there exist an open
ellipsoid $\Delta$ and $r\in(1,\infty)$ such that
$\Delta\subset r\Delta\subset A\Delta$, and one
may additionally assume that $|\Delta|=1$,
where $|\Delta|$ denotes the \emph{n-}dimensional
Lebesgue measure of the set $\Delta$.
For any $k\in\zz$, let $B_k:=A^k\Delta$.
Obviously, $B_k$ is open,
$B_k\subset rB_k\subset B_{k+1}$ and $|B_k|=b^k$.
An ellipsoid $x+B_k$ for some $x\in\rn$ and $k\in\mathbb{Z}$
is called a \emph{dilated ball}.
Denote by $\mathfrak{B}$ the set of all such
dilated balls, namely,
\begin{align}\label{se14}
\mathfrak{B}:=\lf\{x+B_k:\ x\in\rn,\ k\in\mathbb{Z}\r\}.
\end{align}
Throughout this article, let $\tau$ be
the \emph{minimal integer} such that
$r^\tau\geq2$. Then, for any $k\in\mathbb{Z}$,
it holds true that
\begin{align}\label{se1}
B_k+B_k\subset B_{k+\tau}
\end{align}
and
\begin{align}\label{se2}
B_k+(B_{k+\tau})^\complement
\subset (B_k)^\complement,
\end{align}
where $E+F$ denotes the algebraic sum
$\{x+y:\ x\in E,\,y\in F\}$ of sets
$E,\,F\subset\mathbb{R^\emph{n}}$.

The notion of the homogeneous quasi-norm
induced by $A$ was introduced in
\cite[p.\,6, Definition 2.3]{mb03} as follows.

\begin{definition}\label{sd2}
A measurable mapping
$\rho:\ \rn \to [0,\infty)$
is called a \emph{homogeneous quasi-norm},
associated with a dilation $A$, if
\begin{enumerate}
\item[\rm{(i)}] $x\neq\vec0_n$ implies that $\rho(x)\in(0,\fz)$,
here and hereafter, $\vec0_n$ denotes the \emph{origin} of $\rn$;

\item[\rm{(ii)}] $\rho(Ax)=b\rho(x)$ for any $x\in\rn$;

\item[\rm{(iii)}] $\rho(x+y)\leq H[\rho(x)+\rho(y)]$
for any $x,\,y\in\rn$, where $H\in[1,\fz)$ is a constant
independent of $x$ and $y$.
\end{enumerate}
\end{definition}

In the standard dyadic case
$A:=2{\rm I}_{n\times n}$, $\rho(x):=|x|^n$
for any $x\in\rn$ is an example of the
homogeneous quasi-norm associated
with $A$. In \cite[p.\,6, Lemma 2.4]{mb03},
it was proved that all homogeneous
quasi-norms associated with $A$ are equivalent.
Therefore, for a given dilation $A$, in what follows,
we always use the
\emph{step homogeneous quasi-norm} $\rho$
defined by setting, for any $x\in\rn$,
\begin{equation*}
\rho(x):=\sum_{j\in\mathbb{Z}}
b^j\chi_{B_{j+1}\setminus B_j}(x)\hspace{0.25cm}
{\rm if}\ x\neq\vec0_n,\hspace{0.35cm} {\rm or\ else}
\hspace{0.25cm}\rho(\vec0_n):=0
\end{equation*}
for convenience.
Obviously, for any $k\in\zz$,
$B_k=\{x\in\rn:\ \rho(x)<b^k\}$.
Observe that $(\rn,\,\rho,\,dx)$ is a space
of homogeneous type in the sense of
Coifman and Weiss \cite{cw71,cw77}, here and hereafter,
$dx$ denotes the \emph{$n$-dimensional Lebesgue measure},
and, moreover, $(\rn,\,\rho,\,dx)$ is indeed an RD-space (see \cite{hmy08,zy11}).

Recall that a measurable function $p(\cdot):\ \rn\to(0,\fz)$ is called a
\emph{variable exponent}. For any variable exponent
$p(\cdot)$, let
\begin{align}\label{se3}
p_-:=\mathop\mathrm{ess\,inf}_{x\in \rn}p(x)\hspace{0.35cm}
{\rm and}\hspace{0.35cm}
p_+:=\mathop\mathrm{ess\,sup}_{x\in \rn}p(x).
\end{align}
Denote by $\cp(\rn)$ the \emph{set of all variable exponents}
$p(\cdot)$ satisfying $0<p_-\le p_+<\fz$.

Let $f$ be a measurable function on $\rn$ and $p(\cdot)\in\cp(\rn)$.
Then the \emph{modular functional} (or, for simplicity, the \emph{modular})
$\varrho_{p(\cdot)}$, associated with $p(\cdot)$, is defined by setting
$$\varrho_{p(\cdot)}(f):=\int_\rn|f(x)|^{p(x)}\,dx$$ and the
\emph{Luxemburg} (also called \emph{Luxemburg-Nakano})
\emph{quasi-norm} $\|f\|_{\lv}$ by
\begin{equation*}
\|f\|_{\lv}:=\inf\lf\{\lz\in(0,\fz):\ \varrho_{p(\cdot)}(f/\lz)\le1\r\}.
\end{equation*}
Moreover, the \emph{variable Lebesgue space} $\lv$ is defined to be the
set of all measurable functions $f$ satisfying that $\varrho_{p(\cdot)}(f)<\fz$,
equipped with the quasi-norm $\|f\|_{\lv}$.

\begin{remark}\label{sr1}
 Let $p(\cdot)\in\cp(\rn)$.
\begin{enumerate}
\item[(i)] Obviously, for any $r\in (0,\fz)$ and $f\in\lv$,
$$\lf\||f|^r\r\|_{\lv}=\|f\|_{L^{rp(\cdot)}(\rn)}^r.$$
Moreover, for any $\mu\in{\mathbb C}$ and $f,\ g\in\lv$,
$\|\mu f\|_{\lv}=|\mu|\|f\|_{\lv}$ and
$$\|f+g\|_{\lv}^{\underline{p}}\le \|f\|_{\lv}^{\underline{p}}
+\|g\|_{\lv}^{\underline{p}},$$
here and hereafter,
\begin{align}\label{se4}
\underline{p}:=\min\{p_-,1\}
\end{align}
with $p_-$ as in \eqref{se3}.
In particular, when $p_-\in[1,\fz]$,
$\lv$ is a Banach space (see \cite[Theorem 3.2.7]{dhhr11}).

\item[(ii)] It was proved in \cite[Proposition 2.21]{cf13} that,
for any function $f\in\lv$ with $\|f\|_{\lv}>0$,
$\varrho_{p(\cdot)}(f/\|f\|_{\lv})=1$
and, in \cite[Corollary 2.22]{cf13} that, if $\|f\|_{\lv}\le1$,
then $\varrho_{p(\cdot)}(f)\le\|f\|_{\lv}$.
\end{enumerate}
\end{remark}

A function $p(\cdot)\in\cp(\rn)$ is said to satisfy the
\emph{globally log-H\"older continuous condition}, denoted by $p(\cdot)\in C^{\log}(\rn)$,
if there exist two positive constants $C_{\log}(p)$ and $C_\fz$, and
$p_\fz\in\rr$ such that, for any $x,\ y\in\rn$,
\begin{equation}\label{se6}
|p(x)-p(y)|\le \frac{C_{\log}(p)}{\log(e+1/\rho(x-y))}
\end{equation}
and
\begin{equation}\label{se7}
|p(x)-p_\fz|\le \frac{C_\fz}{\log(e+\rho(x))}.
\end{equation}

The following variable Lorentz space $\vlpq$
is known as a special case of the variable Lorentz space
$L^{p(\cdot),q(\cdot)}(\rn)$ investigated by Kempka and
Vyb\'iral in \cite{kv14}.

\begin{definition}\label{sd4}
Let $p(\cdot)\in\cp(\rn)$.
The \emph{variable Lorentz space} $\vlpq$
is defined to be the set of all measurable functions $f$ such that
\begin{align*}
\|f\|_{\vlpq}:=\left\{
\begin{array}{ll}
\lf[\dis\int_0^\fz
\lz^q\lf\|\chi_{\{x\in\rn:\ |f(x)|>\lz\}}\r\|_{\lv}^q\,\frac{d\lz}{\lz}\r]^{\frac{1}{q}}
\hspace{0.5cm} &{\rm when}\hspace{0.5cm} q\in(0,\fz),\\
\dis\sup_{\lz\in(0,\fz)}\lf[\lz\lf\|\chi_{\{x\in\rn:\ |f(x)|>\lz\}}\r\|_{\lv}\r]
\hspace{1.15cm}&{\rm when}\hspace{0.5cm} q=\infty
\end{array}\r.
\end{align*}
is finite.
\end{definition}

From \cite[Lemma 2.4 and Theorem 3.1]{kv14},
we deduce the following Lemmas \ref{sl3*} and \ref{sl3}, respectively.

\begin{lemma}\label{sl3*}
Let $p(\cdot)\in \cp(\rn)$ and $q\in(0,\fz]$. Then, for
any measurable function $f$,
$$\|f\|_{\vlpq}\sim\lf[\sum_{k\in\zz}2^{kq}
\lf\|\chi_{\{x\in\rn:\ |f(x)|>2^k\}}\r\|_{\lv}^q\r]^{1/q}$$
with the usual interpretation for $q=\fz$, where the equivalent
positive constants are independent of $f$.
\end{lemma}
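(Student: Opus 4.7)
The plan is to discretize the integral (or supremum) defining $\|f\|_{\vlpq}$ by a dyadic decomposition of the $\lambda$-axis, exploiting the monotonicity of the level-set quasi-norm $\lambda\mapsto\Phi(\lambda):=\|\chi_{\{x\in\rn:\,|f(x)|>\lambda\}}\|_{\lv}$. Since $\{|f|>\lambda_1\}\supset\{|f|>\lambda_2\}$ whenever $\lambda_1<\lambda_2$, and the Luxemburg quasi-norm is monotone with respect to pointwise absolute value (as is easily checked from the definition of $\varrho_{p(\cdot)}$), the function $\Phi$ is non-increasing on $(0,\fz)$. This monotonicity is the only structural input needed; the rest is a one-line dyadic splitting.

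For $q\in(0,\fz)$ I would write
\begin{equation*}
\int_0^\fz\lambda^q\Phi(\lambda)^q\,\frac{d\lambda}{\lambda}
=\sum_{k\in\zz}\int_{2^k}^{2^{k+1}}\lambda^{q-1}\Phi(\lambda)^q\,d\lambda.
\end{equation*}
On each interval $[2^k,2^{k+1})$, the factor $\lambda^{q-1}\,d\lambda$ contributes $\sim 2^{kq}$, while $\Phi$ is trapped between $\Phi(2^{k+1})$ and $\Phi(2^k)$. Hence the $k$-th summand is bounded from above by $C\,2^{kq}\Phi(2^k)^q$ and from below by $c\,2^{kq}\Phi(2^{k+1})^q$. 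Summing the upper bound gives $\ls\sum_k 2^{kq}\Phi(2^k)^q$ directly, and summing the lower bound and re-indexing $k\mapsto k-1$ gives $\gs\sum_k 2^{kq}\Phi(2^k)^q$. This establishes the equivalence. For $q=\fz$, on each $[2^k,2^{k+1})$ one has $\lambda\Phi(\lambda)\le 2^{k+1}\Phi(2^k)=2\cdot 2^k\Phi(2^k)$ and $\lambda\Phi(\lambda)\ge 2^k\Phi(2^{k+1})$; taking $\sup_{\lambda>0}$ on the left corresponds, up to a factor $2$ and a trivial index shift, to $\sup_{k\in\zz}2^k\Phi(2^k)$.

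I do not anticipate any genuine obstacle: no property of the variable Lebesgue space beyond the monotonicity of $\Phi$ (which comes from the definition of $\varrho_{p(\cdot)}$ and the monotone behaviour of the Luxemburg quasi-norm) is needed, and no appeal to log-H\"older continuity or to the dilation $A$ enters. The mildest care required is the index shift and the $q=\fz$ case, which must be handled separately. The implicit equivalence constants depend only on $q$ (they arise from the factor $2^q$ coming from $\lambda^{q-1}\,d\lambda$ on a dyadic annulus), and are in particular independent of $f$, as asserted.
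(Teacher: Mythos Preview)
Your argument is correct. The paper does not actually prove this lemma; it merely cites \cite[Lemma 2.4]{kv14} (see the sentence preceding Lemmas \ref{sl3*} and \ref{sl3}), and your dyadic discretization of the $\lambda$-axis together with the monotonicity of $\lambda\mapsto\|\chi_{\{|f|>\lambda\}}\|_{\lv}$ is precisely the standard elementary proof that underlies that cited result.
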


\begin{lemma}\label{sl3}
Let $p(\cdot)\in \cp(\rn)$ and $q\in(0,\fz]$. Then $\|\cdot\|_{\vlpq}$
defines a quasi-norm on $\vlpq$.
\end{lemma}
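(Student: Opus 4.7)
The plan is to verify the three defining properties of a quasi-norm: positive definiteness, absolute homogeneity, and the quasi-triangle inequality. The first two are essentially bookkeeping, so the main work will go into the third.

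For positive definiteness, I would argue that if $\|f\|_{\vlpq}=0$, then Lemma \ref{sl3*} forces $\|\chi_{\{x\in\rn:\ |f(x)|>2^k\}}\|_{\lv}=0$ for every $k\in\zz$. By the property of the Luxemburg quasi-norm on characteristic functions (a nonzero characteristic function has positive $\lv$ quasi-norm whenever $p(\cdot)\in\cp(\rn)$), this gives $|\{|f|>2^k\}|=0$ for every $k\in\zz$, and hence $|\{|f|>0\}|=\lim_{k\to-\fz}|\{|f|>2^k\}|=0$, so $f=0$ almost everywhere. Absolute homogeneity $\|\mu f\|_{\vlpq}=|\mu|\,\|f\|_{\vlpq}$ for $\mu\in\cc\setminus\{0\}$ follows from the substitution $\lz\mapsto|\mu|\lz$ in the defining integral (and similarly for the sup when $q=\fz$); the case $\mu=0$ is trivial.

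For the quasi-triangle inequality, I will work with the discrete characterization in Lemma \ref{sl3*}. The key pointwise inclusion is
\begin{equation*}
\{x\in\rn:\ |f(x)+g(x)|>2^k\}\st\{x\in\rn:\ |f(x)|>2^{k-1}\}\cup\{x\in\rn:\ |g(x)|>2^{k-1}\},
\end{equation*}
which yields $\chi_{\{|f+g|>2^k\}}\le\chi_{\{|f|>2^{k-1}\}}+\chi_{\{|g|>2^{k-1}\}}$. Raising to the $\lv$ quasi-norm and using the $\underline{p}$-subadditivity from Remark \ref{sr1}(i) gives
\begin{equation*}
\lf\|\chi_{\{|f+g|>2^k\}}\r\|_{\lv}^{\underline{p}}\le\lf\|\chi_{\{|f|>2^{k-1}\}}\r\|_{\lv}^{\underline{p}}+\lf\|\chi_{\{|g|>2^{k-1}\}}\r\|_{\lv}^{\underline{p}}.
\end{equation*}
Taking the $q/\underline{p}$ power of both sides and applying the elementary inequality $(a+b)^s\ls a^s+b^s$ (valid for all $s>0$ with a constant depending only on $s$), then multiplying by $2^{kq}$, summing over $k\in\zz$, and performing the index shift $k\mapsto k+1$, I obtain
\begin{equation*}
\sum_{k\in\zz}2^{kq}\lf\|\chi_{\{|f+g|>2^k\}}\r\|_{\lv}^q\ls\sum_{k\in\zz}2^{kq}\lf[\lf\|\chi_{\{|f|>2^k\}}\r\|_{\lv}^q+\lf\|\chi_{\{|g|>2^k\}}\r\|_{\lv}^q\r].
\end{equation*}
Invoking Lemma \ref{sl3*} once more and taking the $\min\{q,1\}$-th root of both sides yields the desired quasi-triangle inequality with a constant depending only on $\underline{p}$ and $q$. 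The case $q=\fz$ is handled analogously by replacing sums with suprema.

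The main obstacle is really the calibration of powers: one must track how $\underline{p}$ (which governs the subadditivity of the $\lv$ quasi-norm) interacts with the outer exponent $q$ (from the Lorentz integration). Using the discretized form from Lemma \ref{sl3*} circumvents this cleanly because the substitute for the layer-cake integral is a sum to which elementary $(a+b)^s$ estimates apply term by term. Once this calibration is done, everything else—positivity, definiteness, and homogeneity—reduces to routine properties of the Luxemburg quasi-norm already collected in Remark \ref{sr1}.
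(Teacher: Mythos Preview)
Your argument is correct. The paper does not actually prove this lemma: it simply cites \cite[Theorem~3.1]{kv14}. Your self-contained proof, built from the discrete characterization (Lemma~\ref{sl3*}) together with the $\underline{p}$-subadditivity of the Luxemburg quasi-norm (Remark~\ref{sr1}(i)), is a perfectly good substitute and has the advantage of keeping everything internal to the paper's toolkit. One small cosmetic point: for positive definiteness you can bypass Lemma~\ref{sl3*} and argue directly from Definition~\ref{sd4}, since $\|f\|_{\vlpq}=0$ forces $\|\chi_{\{|f|>\lambda\}}\|_{\lv}=0$ for almost every (hence, by monotonicity in $\lambda$, every) $\lambda\in(0,\infty)$; this avoids even the appearance of leaning on an equivalence that is itself only quoted from \cite{kv14}.
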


It is easy to obtain the following result, the details
being omitted.
\begin{lemma}\label{sl4}
Let $p(\cdot)\in \cp(\rn)$ and $q\in(0,\fz]$.
Then, for any $r\in(0,\fz)$ and $f\in\vlpq$,
$$\lf\||f|^r\r\|_{\vlpq}=\|f\|_{L^{rp(\cdot),rq}(\rn)}^r.$$
\end{lemma}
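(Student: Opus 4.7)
The plan is to reduce the claimed identity directly to the defining expression of the variable Lorentz quasi-norm (an integral when $q<\fz$, a supremum when $q=\fz$) via the substitution $\lz=\mu^r$ in the distributional parameter, together with the power-scaling identity for the Luxemburg quasi-norm recorded in Remark \ref{sr1}(i).

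First I would handle the case $q=\fz$ separately, since the supremum involves no integration subtleties. By Definition \ref{sd4},
$$\lf\||f|^r\r\|_{L^{p(\cdot),\fz}(\rn)}=\sup_{\lz\in(0,\fz)}\lz\lf\|\chi_{\{x\in\rn:\,|f(x)|^r>\lz\}}\r\|_{\lv}.$$
Writing $\lz=\mu^r$ converts the level set into $\{|f|>\mu\}$ and the prefactor into $\mu^r$. Since $|\chi_E|^r=\chi_E$ for any measurable set $E$, Remark \ref{sr1}(i) gives $\|\chi_E\|_{\lv}=\|\chi_E\|_{L^{rp(\cdot)}(\rn)}^r$, which allows the whole expression to be repackaged as $\lf[\sup_{\mu\in(0,\fz)}\mu\|\chi_{\{|f|>\mu\}}\|_{L^{rp(\cdot)}(\rn)}\r]^r$, and this is exactly $\|f\|_{L^{rp(\cdot),\fz}(\rn)}^r$.

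For $q\in(0,\fz)$ the mechanism is identical. I would start from the integral
$$\lf\||f|^r\r\|_{\vlpq}^q=\int_0^\fz\lz^q\lf\|\chi_{\{x\in\rn:\,|f(x)|^r>\lz\}}\r\|_{\lv}^q\,\frac{d\lz}{\lz}$$
and perform the substitution $\lz=\mu^r$, which sends $\lz^q$ to $\mu^{rq}$, the level set to $\{|f|>\mu\}$, and the multiplicative measure $\frac{d\lz}{\lz}$ to a constant multiple of $\frac{d\mu}{\mu}$. Then raising the Remark \ref{sr1}(i) identity to the $q$-th power turns $\|\chi_{\{|f|>\mu\}}\|_{\lv}^q$ into $\|\chi_{\{|f|>\mu\}}\|_{L^{rp(\cdot)}(\rn)}^{rq}$, so that the resulting integrand coincides with the one defining $\|f\|_{L^{rp(\cdot),rq}(\rn)}^{rq}$; extracting the $q$-th root yields the claimed equality.

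I expect no genuine obstacle, which is consistent with the authors declaring the proof omitted. The only point meriting care is the multiplicative factor that arises from the change of variables on $\frac{d\lz}{\lz}$, which must be tracked against the normalization chosen in Definition \ref{sd4}; modulo that bookkeeping the argument is entirely definitional, and no ingredient beyond Definition \ref{sd4} and Remark \ref{sr1}(i) is needed.
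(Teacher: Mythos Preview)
Your approach is exactly the natural one, and since the paper omits the proof entirely there is nothing to compare against. One concrete point you flagged but did not resolve: with Definition~\ref{sd4} as written, the substitution $\lz=\mu^r$ gives $\frac{d\lz}{\lz}=r\,\frac{d\mu}{\mu}$, so the computation actually yields
$$\lf\||f|^r\r\|_{\vlpq}=r^{1/q}\,\|f\|_{L^{rp(\cdot),rq}(\rn)}^r$$
when $q<\fz$ (the case $q=\fz$ is exact, as you note). Thus the lemma as stated is off by the harmless constant $r^{1/q}$; this is immaterial for the paper, which only invokes the lemma up to equivalence (see the proof of Theorem~\ref{tt1}, where $\sim$ is used). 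Your hedging in the final paragraph is therefore well placed: the bookkeeping does not cancel, but nothing downstream depends on it.
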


From the monotone convergence theorem of $L^{p(\cdot)}(\rn)$
(see \cite[Corollary 2.64]{cf13}), we easily deduce the following
monotone convergence property of $\vlpq$, the details being omitted.

\begin{proposition}\label{sl6}
Let $p(\cdot)\in\cp(\rn)$ and $\{f_k\}_{k\in\nn}\subset\vlpq$
be some sequence of non-negative functions satisfying that
$f_k$, as $k\to\fz$, increases pointwisely almost everywhere
to $f\in\vlpq$ in $\rn$. Then
$$\|f-f_k\|_{\vlpq}\to0\hspace{0.5cm}{\rm as}\ \ k\to\fz.$$
\end{proposition}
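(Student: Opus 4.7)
The plan is to exploit the discrete characterization of the $\vlpq$ quasi-norm supplied by Lemma \ref{sl3*} and reduce the whole proposition to two dominated convergence arguments: one inside $\lv$ at each fixed level, and one (for $q<\fz$) for the series indexed by $\zz$. I will focus on $q\in(0,\fz)$; the $q=\fz$ case is handled analogously with a supremum in place of a series.

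First I would apply Lemma \ref{sl3*} to the non-negative function $f-f_k$, obtaining
\begin{equation*}
\|f-f_k\|_{\vlpq}^q \sim \sum_{j\in\zz} 2^{jq}
\lf\|\chi_{E_k^j}\r\|_{\lv}^q,
\qquad E_k^j:=\lf\{x\in\rn:\ (f-f_k)(x)>2^j\r\}.
\end{equation*}
Since $0\le f-f_k\le f$, one has $E_k^j\subset\{x\in\rn:\ f(x)>2^j\}$ for all $j,\,k\in\zz$, and the hypothesis $f\in\vlpq$, re-read through Lemma \ref{sl3*}, guarantees that the majorant series $\sum_{j\in\zz}2^{jq}\|\chi_{\{f>2^j\}}\|_{\lv}^q$ is finite. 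This supplies the dominating sequence that the outer sum will require.

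Next, for each fixed $j\in\zz$, I would show that $\|\chi_{E_k^j}\|_{\lv}\to 0$ as $k\to\fz$. The key observation is that $f_k\uparrow f$ almost everywhere forces $f-f_k\downarrow 0$ almost everywhere, whence $\chi_{E_k^j}\downarrow 0$ almost everywhere, while staying bounded by $\chi_{\{f>2^j\}}\in\lv$. The monotone convergence theorem on $\lv$ quoted in \cite[Corollary 2.64]{cf13}, applied to the increasing sequence $\chi_{\{f>2^j\}}-\chi_{E_k^j}$, then yields
\begin{equation*}
\lim_{k\to\fz}\lf\|\chi_{\{f>2^j\}}-\chi_{E_k^j}\r\|_{\lv}
=\lf\|\chi_{\{f>2^j\}}\r\|_{\lv},
\end{equation*}
which, together with Remark \ref{sr1}(i), gives the desired $\|\chi_{E_k^j}\|_{\lv}\to 0$.

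Combining the two ingredients, dominated convergence for series on $\zz$ finishes the proof for $q<\fz$. The single non-routine step is the passage from the monotone convergence statement of \cite[Corollary 2.64]{cf13} to the dominated version actually invoked; this is the short standard trick of replacing a decreasing dominated sequence by its complement inside a fixed $\lv$-majorant. Everything else is bookkeeping on the discrete level-set side, and I anticipate that the only delicate point to watch is that the quasi-triangle exponent $\underline{p}$ in Remark \ref{sr1}(i) plays no role here because each $\chi_{E_k^j}$ is a genuine characteristic function and comparisons are done level by level.
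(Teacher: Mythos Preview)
Your strategy is precisely what the paper has in mind (it omits the proof, pointing only to monotone convergence in $\lv$), and for $q\in(0,\fz)$ it works. Two points need repair.

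First, the passage from
$\lim_{k\to\fz}\|\chi_{\{f>2^j\}}-\chi_{E_k^j}\|_{\lv}=\|\chi_{\{f>2^j\}}\|_{\lv}$
to $\|\chi_{E_k^j}\|_{\lv}\to0$ via Remark~\ref{sr1}(i) does not go through: the quasi-triangle inequality only yields
$\|\chi_{E_k^j}\|_{\lv}^{\underline p}\le\|\chi_{\{f>2^j\}}\|_{\lv}^{\underline p}+\|\chi_{\{f>2^j\}}-\chi_{E_k^j}\|_{\lv}^{\underline p}$,
which gives no decay. The fix is to bypass the quasi-norm and use the modular: since $E_k^j$ decreases to a null set and $|E_1^j|\le|\{f>2^j\}|<\fz$ (the latter because $\|\chi_{\{f>2^j\}}\|_{\lv}<\fz$ and $p_+<\fz$), classical dominated convergence gives $\varrho_{p(\cdot)}(\chi_{E_k^j}/\lambda)=\int_{E_k^j}\lambda^{-p(x)}\,dx\to0$ for every $\lambda>0$, hence $\|\chi_{E_k^j}\|_{\lv}\to0$.

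Second, and more seriously, your claim that $q=\fz$ ``is handled analogously with a supremum'' is wrong: dominated convergence fails in $\ell^\fz(\zz)$, and in fact the proposition \emph{as stated is false for $q=\fz$}. On $\rr$ with $p(\cdot)\equiv1\in\cp(\rr)$, take $f(x):=x^{-1}\chi_{(0,1)}(x)$ and $f_k:=f\chi_{(1/k,1)}$. Then $f_k\uparrow f$ with $f,f_k\in L^{1,\fz}(\rr)$, yet $f-f_k=f\chi_{(0,1/k]}$ satisfies $\lambda\,|\{f-f_k>\lambda\}|=1$ for every $\lambda\ge k$, so $\|f-f_k\|_{L^{1,\fz}(\rr)}\ge1$ for all $k$. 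The applications in Theorem~\ref{tt1} only require the weaker conclusion $\|f_k\|_{\vlpq}\to\|f\|_{\vlpq}$, which does hold for all $q\in(0,\fz]$ by a straightforward monotonicity argument; if you are writing this out, state and prove that version instead.
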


Throughout this paper, denote by $\cs(\rn)$ the space of
all Schwartz functions, namely, the set of all $C^\infty(\rn)$
functions $\varphi$ satisfying that, for every integer $\ell\in\zz_+$ and
multi-index $\alpha\in(\zz_+)^n$,
$$\|\varphi\|_{\alpha,\ell}:=
\sup_{x\in\rn}[\rho(x)]^\ell
\lf|\partial^\alpha\varphi(x)\r|<\infty.$$
These quasi-norms $\{\|\cdot\|_{\az,\ell}\}_{\az\in(\zz_+)^n,\ell\in\zz_+}$
also determine the topology of $\cs(\rn)$.
We use $\cs'(\rn)$ to denote the dual space of $\cs(\rn)$, namely,
the space of all tempered distributions on $\rn$ equipped with the
weak-$\ast$ topology. For any $N\in\mathbb{Z}_+$, let
$$\cs_N(\rn):=\{\varphi\in\cs(\rn):\
\|\varphi\|_{\alpha,\ell}\leq1,\
|\alpha|\leq N,\ \ell\leq N\};$$
equivalently,
\begin{align*}
\varphi\in\cs_N(\rn)\Longleftrightarrow
\|\varphi\|_{\cs_N(\rn)}:=\sup_{|\alpha|\leq N}
\sup_{x\in\rn}\lf[\lf|\partial^\alpha
\varphi(x)\r|\max\lf\{1,\lf[
\rho(x)\r]^N\r\}\r]\leq1.
\end{align*}
In what follows, for any
$\varphi\in\cs(\rn)$ and $k\in\mathbb{Z}$, let
$\varphi_k(\cdot):=b^{-k}\varphi(A^{-k}\cdot)$.

\begin{definition}\label{sd1}
Let $\varphi\in\cs(\rn)$ and $f\in\cs'(\rn)$. The
\emph{non-tangential maximal function} $M_\varphi(f)$
and the \emph{radial maximal function} $M_\varphi^0(f)$
of $f$ with respect to $\varphi$ are defined, respectively,
by setting, for any $x\in\rn$,
\begin{align}\label{se11}
M_\varphi(f)(x):= \sup_{y\in x+B_k,
k\in\mathbb{Z}}|f\ast\varphi_k(y)|
\end{align}
and
\begin{equation}\label{se10}
M_\varphi^0(f)(x):= \sup_{k\in\mathbb{Z}}
|f\ast\varphi_k(x)|.
\end{equation}
For any given $N\in\mathbb{N}$, the
\emph{non-tangential grand maximal function} $M_N(f)$
and the \emph{radial grand maximal function} $M_N^0(f)$
of $f\in\cs'(\rn)$ are defined, respectively, by setting,
for any $x\in\rn$,
\begin{equation}\label{se8}
M_N(f)(x):=\sup_{\varphi\in\cs_N(\rn)}
M_\varphi(f)(x)
\end{equation}
and
\begin{equation*}
M_N^0(f)(x):=\sup_{\varphi\in\cs_N(\rn)}
M_\varphi^0(f)(x).
\end{equation*}
\end{definition}

We now introduce anisotropic variable Hardy-Lorentz spaces as follows.

\begin{definition}\label{sd3}
Let $p(\cdot)\in C^{\log}(\rn)$, $q\in(0,\fz]$ and
$N\in\mathbb{N}\cap[\lfloor(\frac1{\underline{p}}-1)\frac{\ln b}{\ln
\lambda_-}\rfloor+2,\fz)$,
where $\underline{p}$ is as in \eqref{se4}.
The \emph{anisotropic variable Hardy-Lorentz space},
denoted by $\vhlpq$, is defined by setting
\begin{equation*}
\vhlpq
:=\lf\{f\in\cs'(\rn):\ M_N^0(f)\in\vlpq\r\}
\end{equation*}
and, for any $f\in\vhlpq$, let
$\|f\|_{\vhlpq}
:=\| M_N^0(f)\|_{\vlpq}$.
\end{definition}

\begin{remark}\label{sr2}
\begin{enumerate}
\item[(i)] Even though the quasi-norm of $\vhlpq$ in Definition \ref{sd3}
depends on $N$, it follows from
Theorem \ref{tt1} below that the space $\vhlpq$
is independent of the choice of $N$ as long as
$N\in[\lfloor(\frac1{\underline{p}}-1)\frac{\ln b}{\ln
\lambda_-}\rfloor+2,\fz)$. If $p(\cdot)\equiv p\in(0,\fz)$,
then the space $\vhlpq$ is just the anisotropic Hardy-Lorentz space
$H^{p,q}_A(\rn)$ investigated by Liu et al. in \cite{lyy16} and, if
$A:=d\,{\rm I}_{n\times n}$ for some $d\in\rr$ with $|d|\in(1,\fz)$ and $q=\fz$,
then the space $H^{p(\cdot),\fz}_A(\rn)$ becomes the variable weak Hardy space introduced
by Yan et al. in \cite{yyyz16}.

\item[(ii)] Very recently, via the variable Lorentz spaces
$\cl^{p(\cdot),q(\cdot)}(\rn)$ in \cite{eks08}, where
$$p(\cdot),\ q(\cdot):\ (0,\fz)\to(0,\fz)$$
are bounded measurable functions, Almeida et al. \cite{abr16}
investigated the anisotropic variable Hardy-Lorentz spaces
$H^{p(\cdot),q(\cdot)}(\rn, A)$ on $\rn$.
As was mentioned in
\cite[Remark 2.6]{kv14}, the space $\cl^{p(\cdot),q(\cdot)}(\rn)$
in \cite{eks08} never goes back to the space $\lv$,
since the variable exponent $p(\cdot)$ in $\cl^{p(\cdot),q(\cdot)}(\rn)$
is only defined on $(0,\fz)$ while not on $\rn$.
On the other hand, the space $\vhlpq$, in this article,
is defined via the variable Lorentz space
$L^{p(\cdot),q(\cdot)}(\rn)$ (with $q(\cdot)\equiv{\rm a\ constant}\in(0,\fz]$)
from \cite{kv14}, which is not covered by the space $H^{p(\cdot),q(\cdot)}(\rn, A)$
in \cite{abr16}.
Moreover, as was pointed out in \cite[p.\,6]{abr16},
the key tool of \cite{abr16} is the fact that
the set $L_\loc^1(\rn)\cap H^{p(\cdot),q(\cdot)}(\rn, A)$
is dense in $H^{p(\cdot),q(\cdot)}(\rn, A)$.
Therefore, the method used in \cite{abr16} does not work for
$\vhlpq$ in the present article,
due to the lack of a dense function subspace of $H_A^{p(\cdot),\fz}(\rn)$ even when
$p(\cdot)\equiv {\rm a\ constant}\in(0,\fz)$ and
$A:=d\,{\rm I}_{n\times n}$ for some $d\in\rr$ with $|d|\in(1,\fz)$.
\end{enumerate}
\end{remark}

\section{Maximal function characterizations of $\vhlpq$\label{s3}}
\hskip\parindent
In this section, we  characterize $\vhlpq$
in terms of the radial maximal function $M_\varphi^0$ (see \eqref{se10})
or the non-tangential maximal function $M_\varphi$ (see \eqref{se11}).
We begin with the following Definitions \ref{td1} and \ref{td2}
from \cite{mb03}.

\begin{definition}\label{td1}
For any function
$F:\ \rn\times\mathbb{Z}\to [0,\infty)$,
$K\in\mathbb{Z}\cup\{\infty\}$ and $\ell\in\mathbb{Z}$,
the \emph{maximal function $F^{* K}_\ell$} of $F$ with \emph{aperture}
$\ell$ is defined by setting, for any $x\in\rn$,
\begin{align*}
F^{* K}_\ell(x):=
\sup_{k\in\mathbb{Z},\,k\le K}
\sup_{y\in x+B_{k+\ell}}F(y,k).
\end{align*}
\end{definition}

\begin{definition}\label{td2}
Let $K\in\zz$, $L\in[0,\fz)$ and $N\in\nn$. For any $\phi\in\cs$,
the \emph{radial maximal function} $M_\phi^{0(K,L)}(f)$,
the \emph{non-tangential maximal function} $M_\phi^{(K,L)}(f)$ and
the \emph{tangential maximal function} $T_\phi^{N(K,L)}(f)$ of
$f\in\cs'(\rn)$ are, respectively, defined by
setting, for any $x\in\rn$,
\begin{align*}
M_\phi^{0(K,L)}(f)(x):=\sup_{k\in\mathbb{Z},\,k\le K}
|(f\ast\phi_k)(x)|\lf[\max\lf\{1,\rho\lf(A^{-K}x\r)\r\}\r]^
{-L}\lf(1+b^{-k-K}\r)^{-L},
\end{align*}
\begin{align*}
M_\phi^{(K,L)}(f)(x):=\sup_{k\in\mathbb{Z},\,k\le K}
\sup_{y\in x+B_k}|(f\ast\phi_k)(y)|\lf[\max\lf\{1,\rho
\lf(A^{-K}y\r)\r\}\r]^{-L}\lf(1+b^{-k-K}\r)^{-L}
\end{align*}
and
\begin{align*}
T_\phi^{N(K,L)}(f)(x):=\sup_{k\in\mathbb{Z},\,k\le K}
\sup_{y\in\rn}\frac{|(f\ast\phi_k)(y)|}{[\max\lf\{1,\rho
\lf(A^{-k}(x-y)\r)\r\}]^{N}}\frac{(1+b^{-k-K})^{-L}}
{[\max\lf\{1,\rho\lf(A^{-K}y\r)\r\}]^{L}}.
\end{align*}
Furthermore, the \emph{radial grand maximal function} $M_N^{0(K,L)}(f)$
and the \emph{non-tangential grand maximal function}
$M_N^{(K,L)}(f)$ of
$f\in\cs'(\rn)$ are, respectively, defined by
setting, for any $x\in\rn$,
\begin{align*}
M_N^{0(K,L)}(f)(x):=
\sup_{\phi\in\cs_N(\rn)}M_\phi^{0(K,L)}(f)(x)
\end{align*} and
\begin{align*}
M_N^{(K,L)}(f)(x):=
\sup_{\phi\in\cs_N(\rn)}M_\phi^{(K,L)}(f)(x).
\end{align*}
\end{definition}

For any $r\in(0,\fz)$, denote by $L_{\rm loc}^r(\rn)$ the set of all
locally $r$-integrable functions
on $\rn$ and, for any measurable set $E\subset\rn$, by $L^r(E)$
the set of all measurable functions $f$ such that
$$\|f\|_{L^r(E)}:=\lf[\int_E|f(x)|^r\,dx\r]^{1/r}<\fz.$$
Recall that the \emph{Hardy-Littlewood maximal operator}
$M_{{\rm HL}}(f)$ is defined by setting,
for any $f\in L^1_{{\rm loc}}(\rn)$ and $x\in\rn$,
\begin{align}\label{se5}
M_{{\rm HL}}(f)(x):=\sup_{k\in\mathbb{Z}}
\sup_{y\in x+B_k}\frac1{|B_k|}
\int_{y+B_k}|f(z)|\,dz=\sup_{x\in B\in\mathfrak{B}}
\frac1{|B|}\int_B|f(z)|\,dz,
\end{align}
where $\mathfrak{B}$ is as in \eqref{se14}.

Observe that $(\rn,\,\rho,\,dx)$ is a space
of homogeneous type in the sense of
Coifman and Weiss \cite{cw71,cw77}. From this and
\cite[Theorems 5.2 and 4.3]{hhp04}, we deduce the following lemma,
which is an anisotropic version of \cite[Theorem 3.16]{cf13},
the details being omitted.

\begin{lemma}\label{sl1}
Let $p(\cdot)\in C^{\log}(\rn)$.
\begin{enumerate}
\item[{\rm(i)}] If $1\le p_-\le p_+<\fz$, then,
for any $s\in[1,\fz)$, $sp(\cdot)\in C^{\log}(\rn)$
and, for any $f\in L^{sp(\cdot)}(\rn)$,
$$\sup_{\lz\in(0,\fz)}\lf\|\lz\chi_{\{x\in\rn:\ M_{{\rm HL}}(f)(x)>\lz\}}\r\|
_{L^{sp(\cdot)}(\rn)}\le C\|f\|_{L^{sp(\cdot)}(\rn)},$$
where $C$ is a positive constant independent of $f$;

\item[{\rm(ii)}] If $1<p_-\le p_+<\fz$, then,
for any $s\in[1,\fz)$, $sp(\cdot)\in C^{\log}(\rn)$
and, for any $f\in L^{sp(\cdot)}(\rn)$,
$$\|M_{{\rm HL}}(f)\|_{L^{sp(\cdot)}(\rn)}
\le \widetilde{C}\|f\|_{L^{sp(\cdot)}(\rn)},$$
where $\widetilde{C}$ is a positive constant independent of $f$.
\end{enumerate}
\end{lemma}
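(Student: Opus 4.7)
The plan is to deduce both parts as direct consequences of known maximal function bounds on variable Lebesgue spaces over spaces of homogeneous type, once the exponent $sp(\cdot)$ has been shown to inherit the log-H\"older continuity of $p(\cdot)$. The preservation of the log-H\"older condition under multiplication by a constant $s\in[1,\fz)$ is immediate, since the right-hand sides of \eqref{se6} and \eqref{se7} are linear in $p(\cdot)$: the function $sp(\cdot)$ satisfies the same two inequalities with constants $sC_{\log}(p)$ and $sC_\fz$ and with limit value $sp_\fz$, so $sp(\cdot)\in C^{\log}(\rn)$. Since $s\ge1$, the hypotheses $(sp)_-\ge1$ in (i) and $(sp)_->1$ in (ii) follow at once from the corresponding hypotheses on $p_-$, and $(sp)_+=sp_+<\fz$.

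Next, I would invoke the observation already made in Section \ref{s2} that $(\rn,\rho,dx)$ is a space of homogeneous type in the sense of Coifman and Weiss. In this quasi-metric structure, the $\rho$-ball centred at $x\in\rn$ of $\rho$-radius $b^k$ is precisely the dilated ball $x+B_k\in\mathfrak{B}$, and Lebesgue measure is doubling with respect to these balls by \eqref{se1} together with $|B_k|=b^k$. Hence the operator $M_{\rm HL}$ in \eqref{se5} coincides, up to constants depending only on $A$, with the uncentred Hardy--Littlewood maximal operator associated to $\rho$ on this space of homogeneous type. Part (i) then follows by applying \cite[Theorem 5.2]{hhp04} to the exponent $sp(\cdot)\in C^{\log}(\rn)$ with $(sp)_-\ge1$, which yields the weak-type endpoint bound. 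Part (ii) follows from \cite[Theorem 4.3]{hhp04}, which, under $sp(\cdot)\in C^{\log}(\rn)$ and $(sp)_->1$, upgrades the weak bound to the strong bound via an interpolation argument with the trivial $L^\fz$ estimate.

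The main, and essentially only, point requiring care is the identification between $M_{\rm HL}$ in \eqref{se5} and the Hardy--Littlewood maximal operator to which \cite[Theorems 4.3 and 5.2]{hhp04} apply. This is not a conceptual obstacle but a bookkeeping one: every $\rho$-ball is a dilated ball $x+B_k$, and the bounded distortion encoded by $\tau$ in \eqref{se1} allows one to absorb the passage between centred and uncentred averages, and between $y+B_k$ for $y\in x+B_k$ and $x+B_{k+\tau}$, into multiplicative constants. Once this identification is recorded, the log-H\"older continuity of $sp(\cdot)$ verified above is the only remaining hypothesis, and the resulting constants $C$ and $\widetilde C$ depend only on $n$, $A$, $s$, $p_\pm$ and the log-H\"older constants of $p(\cdot)$.
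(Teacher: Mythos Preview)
Your proposal is correct and matches the paper's approach exactly: the paper also omits a detailed proof, simply observing that $(\rn,\rho,dx)$ is a space of homogeneous type and citing \cite[Theorems 5.2 and 4.3]{hhp04} to obtain the weak-type and strong-type bounds, respectively. Your additional remarks on why $sp(\cdot)\in C^{\log}(\rn)$ and on the identification of $M_{\rm HL}$ with the maximal operator in \cite{hhp04} simply fill in details the paper leaves implicit.
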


Moreover, as a simple consequence of \cite[Theorem 4.1]{kv14},
\cite[Theorem 3.1]{yyyz16} and Lemma \ref{sl1}(ii),
we immediately obtain the following boundedness of $\HL$ on $\vlpq$,
which is of independent interest, the details being omitted.

\begin{lemma}\label{sl7}
Let $p(\cdot)\in C^{\log}(\rn)$ satisfy $1<p_-\le p_+<\fz$,
where $p_-$ and $p_+$ are as in \eqref{se3}, and $q\in(0,\fz]$.
Then the Hardy-Littlewood maximal operator $\HL$ is bounded on $\vlpq$.
\end{lemma}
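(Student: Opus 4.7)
The plan is to deduce the result by Marcinkiewicz-type real interpolation between two strong-type endpoints, both supplied by Lemma \ref{sl1}(ii). Since $p_->1$, I can choose constants $s_0\in(1/p_-,1)$ and $s_1\in(1,\fz)$. Then $s_ip(\cdot)\in C^{\log}(\rn)$ (the $C^{\log}$ condition is invariant under multiplication by a positive constant), and $(s_ip)_-=s_ip_->1$ for $i\in\{0,1\}$. Applying Lemma \ref{sl1}(ii) to each of these exponents shows that $\HL$ is bounded on $L^{s_0p(\cdot)}(\rn)$ and on $L^{s_1p(\cdot)}(\rn)$.

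Next I would invoke \cite[Theorem 4.1]{kv14}, which identifies the variable Lorentz space as a real interpolation space between two variable Lebesgue spaces. Writing the scaling relation $1=(1-\theta)/s_0+\theta/s_1$ determines a constant $\theta\in(0,1)$, and the theorem yields
$$\bigl(L^{s_0p(\cdot)}(\rn),\,L^{s_1p(\cdot)}(\rn)\bigr)_{\theta,q}=\vlpq$$
(with equivalent quasi-norms) for any $q\in(0,\fz]$. The standard real interpolation machinery, together with the weak-type extension framework of \cite[Theorem 3.1]{yyyz16} that legitimizes its use for sublinear operators in the variable-exponent setting, then promotes the two strong-type bounds of Step 1 to boundedness on the intermediate space $\vlpq$.

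The only real delicacy lies in verifying that the Kempka--Vyb\'iral interpolation identity, originally formulated in the Euclidean setting with the usual Lebesgue measure, transfers to our anisotropic framework. However, that identity is essentially a layer-cake/distributional statement: it depends only on the Luxemburg quasi-norms of characteristic functions of level sets, and the anisotropy enters the present setup purely through the quasi-norm $\rho$ used to define the $C^{\log}$ class, which is left untouched by the scaling $p(\cdot)\mapsto s_ip(\cdot)$. Hence the interpolation identity applies verbatim, and the argument closes. If one prefers to avoid this transfer, an alternative would be to argue directly from the characterization in Lemma \ref{sl3*}: decompose $f=f\chi_{\{|f|\le 2^{k-1}\}}+f\chi_{\{|f|>2^{k-1}\}}$ at each dyadic level, apply Lemma \ref{sl1}(i) to the second piece to estimate $\|\chi_{\{\HL(f)>2^k\}}\|_{\lv}$, and then sum the resulting $\ell^q$ series, but the interpolation route is cleaner and matches the three references cited.
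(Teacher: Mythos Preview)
Your proposal is correct and follows essentially the same route that the paper indicates: the paper simply records that the lemma is ``a simple consequence of \cite[Theorem 4.1]{kv14}, \cite[Theorem 3.1]{yyyz16} and Lemma \ref{sl1}(ii)'' with details omitted, and you have supplied precisely those details by interpolating between the two strong-type bounds on $L^{s_0p(\cdot)}(\rn)$ and $L^{s_1p(\cdot)}(\rn)$. Your remark that the Kempka--Vyb\'iral interpolation identity transfers verbatim because the spaces $L^{p(\cdot)}(\rn)$ and $L^{p(\cdot),q}(\rn)$ are defined without reference to the anisotropic structure is exactly the right observation.
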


\begin{lemma}\label{tl1}
Let $p(\cdot)\in C^{\log}(\rn),\,N\in(1/p_-,\fz)\cap\mathbb{N}$
and $\phi\in\cs(\rn)$.
Then there exists a positive constant $C$ such that, for any
$K\in\mathbb{Z}$, $L\in[0,\fz)$ and $f\in\cs'(\rn)$,
\begin{align}\label{te1}
\lf\|T_\phi^{N(K,L)}(f)\r\|_{\vlpq}
\le C\lf\|M_\phi^{(K,L)}(f)\r\|_{\vlpq}.
\end{align}
\end{lemma}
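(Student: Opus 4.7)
The plan is to reduce \eqref{te1} to the boundedness of the Hardy--Littlewood maximal operator $\HL$ on a rescaled variable Lorentz space via a pointwise majorization of the tangential maximal function. Specifically, I would first choose an auxiliary exponent $r\in(0,p_-)$ with $r$ close enough to $p_-$ so that the product $rN$ is admissible in an anisotropic Str\"omberg--Torchinsky style averaging argument (this is possible thanks to $N>1/p_-$). The Aoki--Rolewicz theorem is then used at the local level to deal with the quasi-subadditivity of $|\cdot|^r$ when $r<1$, yielding the pointwise estimate
\begin{align}\label{te1plan}
T_\phi^{N(K,L)}(f)(x)\le C\lf\{\HL\lf(\lf[M_\phi^{(K,L)}(f)\r]^r\r)(x)\r\}^{1/r}
\hspace{0.25cm}{\rm for\ all}\hspace{0.25cm}x\in\rn,
\end{align}
where $C$ is independent of $K$, $L$ and $f$.

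Granted \eqref{te1plan}, I would take the $\vlpq$ quasi-norm of both sides and apply Lemma \ref{sl4} with exponent $1/r$ to obtain
\begin{align*}
\lf\|T_\phi^{N(K,L)}(f)\r\|_{\vlpq}
\le C\lf\|\HL\lf(\lf[M_\phi^{(K,L)}(f)\r]^r\r)\r\|_{L^{p(\cdot)/r,q/r}(\rn)}^{1/r}.
\end{align*}
Since $r\in(0,p_-)$, the rescaled exponent $p(\cdot)/r$ still lies in $C^{\log}(\rn)$ and satisfies $(p(\cdot)/r)_-=p_-/r>1$ together with $(p(\cdot)/r)_+=p_+/r<\fz$. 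Lemma \ref{sl7}, applied with variable exponent $p(\cdot)/r$ and Lorentz parameter $q/r$, then gives the boundedness of $\HL$ on $L^{p(\cdot)/r,q/r}(\rn)$. A second application of Lemma \ref{sl4}, in the reverse direction, converts $\|[M_\phi^{(K,L)}(f)]^r\|_{L^{p(\cdot)/r,q/r}(\rn)}^{1/r}$ back to $\|M_\phi^{(K,L)}(f)\|_{\vlpq}$, which yields \eqref{te1}.

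The main obstacle is the pointwise estimate \eqref{te1plan} itself. The classical model (in the isotropic, non-truncated setting) is due to Fefferman--Stein and Str\"omberg--Torchinsky; the anisotropic non-truncated version appears in Bownik's memoir. Here one must verify that the truncation-decay factors $[\max\{1,\rho(A^{-K}y)\}]^{-L}$ and $(1+b^{-k-K})^{-L}$ occurring in the definitions of $M_\phi^{(K,L)}(f)$ and $T_\phi^{N(K,L)}(f)$ behave essentially as constants on the dilated balls used in the averaging argument, so that they pass through the pointwise estimate uniformly in $K$ and $L$; this uniform bookkeeping in $K$ and $L$ is the delicate technical point. Once this is completed and the admissible range of $r$ is matched to the assumption $N>1/p_-$, the remaining steps are routine applications of the rescaling identity Lemma \ref{sl4} and the maximal inequality Lemma \ref{sl7}.
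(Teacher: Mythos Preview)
Your approach is correct but follows a genuinely different route from the paper's. The paper does not establish the single pointwise majorization \eqref{te1plan}. Instead, it expands the tangential maximal function as a series of non-tangential maximal functions with growing aperture,
$T_\phi^{N(K,L)}(f)(x)\le\sum_{j=0}^\infty b^{-jN}F_{j+1}^{*K}(x)$
(quoted from \cite[(4.7)]{lyy16}), and then controls aperture $\ell$ by aperture $0$ at the \emph{norm} level, using the weak-type bound for $\HL$ on $L^{p(\cdot)/p_-}(\rn)$ (Lemma~\ref{sl1}(i)) to get $\|F_\ell^{*K}\|_{\vlpq}\ls b^{\ell/p_-}\|F_0^{*K}\|_{\vlpq}$; the resulting series of quasi-norms is summed via Aoki--Rolewicz, and $N>1/p_-$ gives convergence. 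Your route instead sums the aperture series \emph{pointwise}: for any $r\in(1/N,p_-)$ one has the elementary estimate $F_\ell^{*K}(x)\le b^{(\ell+\tau)/r}\lf[\HL\lf((F_0^{*K})^r\r)(x)\r]^{1/r}$ (since $F(y,k)\le F_0^{*K}(z)$ for every $z\in y+B_k$, and $y+B_k\subset x+B_{k+\ell+\tau}$), so the geometric series converges to give \eqref{te1plan}, after which Lemma~\ref{sl7} finishes the job. Two remarks: first, the truncation--decay factors are not a delicate point here, because they are already built into $F(y,k)$ and hence into $F_0^{*K}=M_\phi^{(K,L)}(f)$, so the aperture comparison holds verbatim; second, Aoki--Rolewicz is not actually needed for \eqref{te1plan} (a plain geometric series in $j$ suffices)---it is only in the paper's argument, where one sums quasi-norms rather than pointwise values, that Aoki--Rolewicz is genuinely required. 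Your argument is arguably more direct once \eqref{te1plan} is in hand, at the price of invoking the stronger Lemma~\ref{sl7} in place of the weak-type Lemma~\ref{sl1}(i).
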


\begin{proof}
We first prove that, for any $p(\cdot)\in C^{\log}(\rn)$,
$K\in\mathbb{Z}$ and $\ell\in[\ell',\fz)\cap\mathbb{Z}$,
\begin{align}\label{te5}
\lf\|F_\ell^{* K}\r\|_{\vlpq}
\ls b^{(\ell-\ell')/p_-}\lf\|F_{\ell'}^{* K}\r\|
_{\vlpq},
\end{align}
where $F_\ell^{* K}$ is as in Definition \ref{td1} and,
for any $\phi\in\cs(\rn)$, $f\in\cs'(\rn)$,
$k\in\zz$ and $y\in\rn$,
\begin{align*}
F(y,k):=|(f\ast\phi_k)(y)|\max\lf[\lf\{1,\rho
\lf(A^{-K}y\r)\r\}\r]^{-L}\lf(1+b^{-k-K}\r)^{-L}.
\end{align*}
Indeed,
by a proof similar to that of \cite[p.\,42, Lemma 7.2]{mb03},
we easily find that, for any $\ell,\,\ell'\in\mathbb{Z}$ with $\ell\geq\ell'$
and $\lz\in(0,\fz)$,
\begin{align}\label{te7}
\lf\{x\in\rn:\ F^{* K}_\ell(x)>\lz\r\}\subset
\lf\{x\in\rn:\ \HL(\chi_{\Omega_\lz})(x)\ge b^{\ell'-\ell-\tau}\r\},
\end{align}
where, for any $\lz\in(0,\fz)$,
$\Omega_\lz:=\{x\in\rn:\ F^{* K}_{\ell'}(x)>\lz\}$. Then, by \eqref{te7},
Lemma \ref{sl1}(i) and Remark \ref{sr1}(i), we know that
\begin{align}\label{te8}
\lf\|\chi_{\{x\in\rn:\ F^{* K}_\ell(x)>\lz\}}\r\|_{\lv}
&\le\lf\|\chi_{\{x\in\rn:\
\HL(\chi_{\Omega_\lz})(x)\ge b^{\ell'-\ell-\tau}\}}\r\|_{\lv}\\
&=\lf\|\chi_{\{x\in\rn:\ \HL(\chi_{\Omega_\lz})(x)\ge
b^{\ell'-\ell-\tau}\}}\r\|^{1/p_-}_{L^{\frac{p(\cdot)}{p_-}}(\rn)}\noz\\
&\sim b^{\frac{\ell-\ell'}{p_-}}\lf\|b^{\ell'-\ell-\tau}
\chi_{\{x\in\rn:\ \HL(\chi_{\Omega_\lz})(x)\ge
b^{\ell'-\ell-\tau}\}}\r\|^{1/p_-}_{L^{\frac{p(\cdot)}{p_-}}(\rn)}\noz\\
&\ls b^{\frac{\ell-\ell'}{p_-}}\lf\|\chi_{\Omega_\lz}
\r\|^{1/p_-}_{L^{\frac{p(\cdot)}{p_-}}(\rn)}
\sim b^{\frac{\ell-\ell'}{p_-}}\lf\|\chi_{\Omega_\lz}\r\|_{\lv},\noz
\end{align}
which, together with the definition of $\Omega_\lz$ and Definition \ref{sd4},
further implies \eqref{te5}.

On the other hand, by \cite[(4.7)]{lyy16},
for any $N\in\mathbb{N}$, $K\in\mathbb{Z}$, $L\in[0,\fz)$,
$f\in\cs'(\rn)$ and $x\in\rn$, we have
\begin{align}\label{te6}
T_\phi^{N(K,L)}(f)(x)
\le\sum_{j=0}^\infty F^{* K}_{j+1}(x)b^{-jN}.
\end{align}

Now we show \eqref{te1}. By \eqref{te6}, the Aoki-Rolewicz theorem
(see \cite{ta42, sr57}), \eqref{te8} and the fact that
$N\in(1/p_-,\fz)\cap\mathbb{N}$, it is easy to see that
there exists $\upsilon\in(0,1]$ such that
\begin{align*}
\lf\|T_\phi^{N(K,L)}(f)\r\|_{\vlpq}^\upsilon
&\le \sum_{j=0}^\infty b^{-jN\upsilon}\lf\|F^{* K}
_{j+1}\r\|_{\vlpq}^\upsilon\\
&\ls \sum_{j=0}^\infty b^{-jN\upsilon}b^{(j+1)
\upsilon/p_-}\lf\|F^{* K}_0\r\|_{\vlpq}^\upsilon
\ls\lf\|M_\phi^{(K,L)}(f)\r\|_{\vlpq}^\upsilon,\noz
\end{align*}
which implies \eqref{te1} and
hence completes the proof of Lemma \ref{tl1}.
\end{proof}

The following Lemmas \ref{tl2} and \ref{tl3} are just
\cite[p.\,45, Lemma 7.5 and p.\,46, Lemma 7.6]{mb03}, respectively.

\begin{lemma}\label{tl2}
Let $\phi\in\cs(\rn)$ and
$\int_{\rn}\phi(x)\,dx\neq0$. Then,
for any given $N\in\mathbb{N}$ and $L\in[0,\fz)$, there
exist an $I\in\mathbb{N}$ and a positive constant $C_{(N,L)}$,
depending on $N$ and $L$,
such that, for any $K\in\zz_+$, $f\in\cs'(\rn)$ and $x\in\rn$,
$$M_I^{0(K,L)}(f)(x)\le C_{(N,L)}T_\phi^{N(K,L)}(f)(x).$$
\end{lemma}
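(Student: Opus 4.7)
The plan is to implement a Calder\'{o}n-type reproducing formula tailored to $\phi$: represent an arbitrary $\psi\in\cs_I(\rn)$ acting on $f$ as a rapidly convergent sum of convolutions against $\phi$ at various anisotropic scales, then use the tangential maximal function to control every term pointwise. The index $I\in\nn$ will be chosen at the end, large enough in terms of $N$ and $L$ to absorb all the growth factors that appear.

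\textbf{Step 1 (Reproducing formula).} First I would normalize so that $\int_{\rn}\phi(x)\,dx=1$. Using the anisotropic Calder\'on reproducing identity from \cite{mb03}, one constructs an auxiliary Schwartz function $\wz\phi$ with sufficiently many vanishing moments so that the family $\{\wz\phi_j\ast\phi_j\}_{j\in\zz}$ reproduces the identity on $\cs'(\rn)$ modulo polynomials. Applying this to an arbitrary $\psi\in\cs_I(\rn)$ yields, at scale $k$, a decomposition of the form
\begin{equation*}
\psi_k=\sum_{j=0}^{\fz}\eta^{(j,k,\psi)}_{k+j}\ast\phi_{k+j},
\end{equation*}
in which the auxiliary Schwartz functions $\eta^{(j,k,\psi)}$ obey a uniform estimate $\|\eta^{(j,k,\psi)}\|_{\cs_M}\ls\|\psi\|_{\cs_I}\,b^{-j\sigma}$ for every prescribed $M\in\nn$ and some $\sigma>0$ determined by the vanishing-moment order, and hence ultimately by $I$.

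\textbf{Step 2 (Pointwise comparison).} Fix $x\in\rn$ and $k\in\zz$ with $k\le K$. The very definition of the tangential maximal function gives, for every $j\ge0$ and every $z\in\rn$,
\begin{equation*}
|(f\ast\phi_{k+j})(x-z)|\ls T_\phi^{N(K,L)}(f)(x)\,[\max\{1,\rho(A^{-(k+j)}z)\}]^N[\max\{1,\rho(A^{-K}(x-z))\}]^L(1+b^{-k-j-K})^L.
\end{equation*}
Substituting this into the representation of $\psi_k\ast f$ from Step~1 produces
\begin{equation*}
|(f\ast\psi_k)(x)|\ls T_\phi^{N(K,L)}(f)(x)\sum_{j=0}^{\fz}\int_{\rn}|\eta^{(j,k,\psi)}_{k+j}(z)|\cdot(\text{growth factors in }z,\,k,\,j)\,dz.
\end{equation*}
Using the Schwartz decay $|\eta^{(j,k,\psi)}_{k+j}(z)|\ls\|\eta^{(j,k,\psi)}\|_{\cs_M}b^{-k-j}[\max\{1,\rho(A^{-(k+j)}z)\}]^{-M}$ for a large $M$ kills the $z$-growth coming from the tangential maximal function; the resulting $z$-integrals are uniformly bounded in $k$ and $j$, and the $j$-series converges thanks to the geometric decay $b^{-j\sigma}$ provided $I$ is taken large enough in $N$ and $L$. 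Dividing by $[\max\{1,\rho(A^{-K}x)\}]^L(1+b^{-k-K})^L$ and taking the supremum over $\psi\in\cs_I(\rn)$ and $k\le K$ then yields $M_I^{0(K,L)}(f)(x)\ls T_\phi^{N(K,L)}(f)(x)$, as claimed.

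\textbf{Main obstacle.} The crux is the \emph{quantitative} Calder\'on-type reproducing identity: the Schwartz norms $\|\eta^{(j,k,\psi)}\|_{\cs_M}$ must decay geometrically in $j$ at a rate $\sigma$ that dominates both the tangential growth power $N$ and the weight power $L$, which is the source of the dependence $I=I(N,L)$. A secondary difficulty is the bookkeeping of the $K$-dependent weight $\max\{1,\rho(A^{-K}\cdot)\}^{-L}$, which is not translation invariant: propagating it from $x-z$ back to $x$ and from the scale $k+j$ back to $k$ is done via the quasi-triangle inequality for $\rho$ together with $\rho(A^{-k-j}\cdot)\le b^{-j}\rho(A^{-k}\cdot)$, at the cost of a few additional powers of $\max\{1,\rho(A^{-(k+j)}z)\}$ that are immediately absorbed into the Schwartz decay of $\eta^{(j,k,\psi)}$.
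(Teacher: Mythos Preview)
The paper does not prove this lemma; it merely records it as \cite[p.\,45, Lemma~7.5]{mb03}. Your outline is precisely the strategy Bownik uses there: decompose an arbitrary $\psi\in\cs_I(\rn)$ as a rapidly convergent sum of convolutions against anisotropic dilates of $\phi$, with Schwartz-norm control on the coefficient functions, and then dominate each term pointwise by $T_\phi^{N(K,L)}(f)$.

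There is one genuine slip in your indexing, however. In your Step~1 you write $\psi_k=\sum_{j\ge0}\eta^{(j,k,\psi)}_{k+j}\ast\phi_{k+j}$, and in Step~2 you invoke the tangential bound for $f\ast\phi_{k+j}$. But the definition of $T_\phi^{N(K,L)}(f)$ only runs over scales $\le K$, and for $k\le K$ the indices $k+j$ with $j\ge1$ may exceed $K$; the tangential maximal function gives you \emph{no} information about $f\ast\phi_{k+j}$ in that range. The correct decomposition (and the one in \cite{mb03}) goes to \emph{finer} scales: one builds $\eta^{(j)}$ so that $\psi=\sum_{j\ge0}\eta^{(j)}\ast\phi_{-j}$, hence $\psi_k=\sum_{j\ge0}(\eta^{(j)})_k\ast\phi_{k-j}$, and then $k-j\le k\le K$ is automatic. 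With this correction your Step~2 goes through: the ratio $(1+b^{-(k-j)-K})^L/(1+b^{-k-K})^L$ is bounded by $b^{jL}$, the spatial growth contributes at most $b^{jN}$ after integration against the decay of $(\eta^{(j)})_k$, and both are absorbed by the geometric decay $b^{-j\sigma}$ once $I$ is chosen large enough to force $\sigma>N+L$. Apart from this sign in the scale index, your sketch is correct and matches the cited source.
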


\begin{lemma}\label{tl3}
Let $\phi\in\cs(\rn)$ and $\int_{\rn}\phi(x)\,dx\neq0$.
Then, for any given $M\in(0,\fz)$ and $K\in\zz_+$,
there exist $L\in(0,\fz)$ and a positive constant $C_{(K,\,M)}$,
depending on $K$ and $M$, such that,
for any $f\in\cs'(\rn)$ and $x\in\rn$,
\begin{align}\label{te10}
M_\phi^{(K,L)}(f)(x)
\le C_{(K,\,M)}\lf[\max\lf\{1,\rho(x)\r\}\r]^{-M}.
\end{align}
\end{lemma}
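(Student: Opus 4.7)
The approach is to use the continuity of $f\in\cs'(\rn)$ against Schwartz functions and then to track carefully how the Schwartz seminorm of the translated, rescaled bump $\phi_k(y-\cdot)$ depends on the scale $k$ and on the base point $y$. The two weights $(1+b^{-k-K})^{-L}$ and $[\max\{1,\rho(A^{-K}y)\}]^{-L}$ built into the definition of $M_\phi^{(K,L)}(f)$ are designed precisely to absorb that growth once $L$ is taken large enough in terms of $M$ and $K$. A convenient first step is to recall that, since $f\in\cs'(\rn)$, there exist $N_0\in\nn$ and $C_f\in(0,\fz)$, depending only on $f$, such that, for every $\psi\in\cs(\rn)$, $|\langle f,\psi\rangle|\le C_f\|\psi\|_{\cs_{N_0}(\rn)}$. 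Applying this with $\psi:=\phi_k(y-\cdot)$ for $k\le K$ and $y\in\rn$ gives
\begin{align*}
|(f\ast\phi_k)(y)|
\le C_f\sup_{|\alpha|\le N_0}\sup_{z\in\rn}
\lf|\partial_z^\alpha\lf[b^{-k}\phi(A^{-k}(y-z))\r]\r|
\max\lf\{1,[\rho(z)]^{N_0}\r\}.
\end{align*}

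Next I would bound the right-hand side. Each application of $\partial_z^\alpha$ produces a factor of order $\|A^{-k}\|^{|\alpha|}$, which for $k\le K$ can be controlled by $C(1+b^{-k-K})^{N_1}$ for some fixed $N_1=N_1(N_0,A,K)$, using the two-sided estimates of $\|A^{-k}\|$ in terms of $\lambda_-$ and $\lambda_+$. The Schwartz decay of $\phi$ supplies $|\partial^\alpha\phi(w)|\ls[1+\rho(w)]^{-M_1}$ for any $M_1$ to be fixed later. Splitting
$\max\{1,[\rho(z)]^{N_0}\}\ls\max\{1,[\rho(y)]^{N_0}\}+[\rho(y-z)]^{N_0}$
via the quasi-triangle inequality for $\rho$, and killing the $[\rho(y-z)]^{N_0}$ contribution by choosing $M_1$ much larger than $N_0$, I arrive at the pointwise estimate
\begin{align*}
|(f\ast\phi_k)(y)|\le C(1+b^{-k-K})^{N_1}\max\lf\{1,[\rho(y)]^{N_0}\r\},
\end{align*}
with $C$ depending only on $f$, $\phi$ and $K$.

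Finally, given $M\in(0,\fz)$, I would choose $L\in(0,\fz)$ with $L\ge\max\{N_0+M,\,N_1\}$. For any $y\in x+B_k\subset x+B_K$, the properties of the step quasi-norm $\rho$ yield $\max\{1,\rho(y)\}\sim\max\{1,\rho(A^{-K}y)\}\sim\max\{1,\rho(x)\}$ up to constants depending only on $K$. Multiplying the previous estimate by the two weight factors and then taking the supremum over $k\in\zz$ with $k\le K$ and $y\in x+B_k$ yields exactly \eqref{te10}.

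The main obstacle is the simultaneous control of two competing growth regimes inside the Schwartz seminorm of $\phi_k(y-\cdot)$: for very negative $k$ the derivative-times-rescaling factor $\|A^{-k}\|^{|\alpha|}$ blows up and must be absorbed by $(1+b^{-k-K})^{-L}$, while for large $\rho(y)$ the weight $\max\{1,[\rho(z)]^{N_0}\}$ blows up and must be absorbed by $[\max\{1,\rho(A^{-K}y)\}]^{-L}$. Decoupling these two scales cleanly, while still retaining an excess decay of arbitrarily prescribed order $[\max\{1,\rho(x)\}]^{-M}$ in the final bound, is what forces $L$ to depend on both $M$ and $K$ and constitutes the technical heart of the argument.
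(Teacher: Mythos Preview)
The paper does not supply its own proof of this lemma; it simply quotes it from Bownik's memoir \cite[p.\,46, Lemma 7.6]{mb03}. Your argument is essentially the standard proof found there: invoke the order $N_0$ and constant $C_f$ coming from the continuity of $f$ as a tempered distribution, estimate the Schwartz seminorm of the translated dilate $\phi_k(y-\cdot)$ by a polynomial in $b^{-k}$ and $\rho(y)$, and then absorb that polynomial growth by choosing $L$ large. The computations you sketch are correct, including the comparability $\max\{1,\rho(A^{-K}y)\}\sim\max\{1,\rho(y)\}\sim\max\{1,\rho(x)\}$ for $y\in x+B_k\subset x+B_K$ with $K$-dependent constants.

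One point you should make explicit: both $N_0$ and $C_f$ depend on $f$, so the $L$ you construct and the final constant in \eqref{te10} necessarily depend on $f$ (and on $\phi$) as well. The lemma as literally printed in the paper places the quantifier ``for any $f\in\cs'(\rn)$'' \emph{after} the existence of $L$ and $C_{(K,M)}$, which cannot hold (replace $f$ by $cf$ and let $c\to\infty$). In Bownik's original formulation the dependence on $f$ is present, and this is also all that is required for the application in the proof of Theorem \ref{tt1}, where the lemma is invoked for a single fixed $f$.
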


Now we state the main result of this section as follows.

\begin{theorem}\label{tt1}
Suppose that $p(\cdot)\in C^{\log}(\rn)$ and $\phi\in\cs(\rn)$
with $\int_{\rn}\phi(x)\,dx\neq0$. Then, for any $f\in\cs'(\rn)$,
the following statements are mutually equivalent:
\begin{align}\label{te2}
f\in\vhlpq;   \end{align}
\begin{align}\label{te3}
M_\phi(f)\in\vlpq;   \end{align}
\begin{align}\label{te4}
M_\phi^0(f)\in\vlpq.   \end{align}
In this case, it holds true that
\begin{align*}
\|f\|_{\vhlpq}\le C_1\lf\|M_\phi^0(f)
\r\|_{\vlpq}\le C_1\lf\|M_\phi(f)\r\|_
{\vlpq}\le C_2\|f\|_{\vhlpq},
\end{align*}
where $C_1$ and $C_2$ are two positive constants independent
of $f$.
\end{theorem}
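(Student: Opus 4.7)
My plan is to establish the stated cycle of norm inequalities, the middle one being immediate from the pointwise bound $M_\phi^0(f)(x)\le M_\phi(f)(x)$. The two substantive inequalities each compare a grand maximal function with a single ``test function'' maximal function, and I would prove both through the truncated maximal functions of Definition \ref{td2}, pushing all estimates through to the limit $K\to\fz,\ L\to 0^+$ with the help of Proposition \ref{sl6}.

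For the inequality $\|f\|_{\vhlpq}\le C\|M_\phi^0(f)\|_{\vlpq}$, I would fix a sufficiently large $K\in\zz_+$ and chain Lemmas \ref{tl1}--\ref{tl3} in reverse order. Lemma \ref{tl3} provides, for any $M>0$, an $L\in(0,\fz)$ such that $M_\phi^{(K,L)}(f)(x)\ls[\max\{1,\rho(x)\}]^{-M}$; taking $M$ large enough makes this right-hand side a member of $\vlpq$. Lemma \ref{tl1} then gives $\|T_\phi^{N'(K,L)}(f)\|_{\vlpq}\ls\|M_\phi^{(K,L)}(f)\|_{\vlpq}$ for some $N'\in\nn\cap(1/p_-,\fz)$, and Lemma \ref{tl2} (applied with this $N'$) produces an index which I would arrange to coincide with the $N$ of Definition \ref{sd3}, so that $M_N^{0(K,L)}(f)(x)\ls T_\phi^{N'(K,L)}(f)(x)$. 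The critical remaining step is to replace $M_\phi^{(K,L)}$ by $M_\phi^{0(K,L)}$ on the right via a pointwise bound
\begin{equation*}
M_\phi^{(K,L)}(f)(x)\ls\bigl[\HL\bigl((M_\phi^{0(K,L)}(f))^r\bigr)(x)\bigr]^{1/r}
\end{equation*}
for some $r\in(0,p_-)$, which is proved by a geometric argument exploiting the anisotropic covering \eqref{se1}. Then Lemma \ref{sl7}, applied with exponent $p(\cdot)/r$ (which lies in $C^{\log}(\rn)$ and has lower exponent strictly above $1$ for $r$ chosen close enough to $p_-$), combined with Lemma \ref{sl4}, yields $\|M_\phi^{(K,L)}(f)\|_{\vlpq}\ls\|M_\phi^{0(K,L)}(f)\|_{\vlpq}\le\|M_\phi^0(f)\|_{\vlpq}$, uniformly in $K$ and $L$. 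Finally, as $K\to\fz$ and $L\to 0^+$, one has $M_N^{0(K,L)}(f)\uparrow M_N^0(f)$ pointwise, so Proposition \ref{sl6} upgrades the uniform truncated bound to $\|M_N^0(f)\|_{\vlpq}\ls\|M_\phi^0(f)\|_{\vlpq}$.

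The remaining inequality $\|M_\phi(f)\|_{\vlpq}\le C\|f\|_{\vhlpq}$ follows by a parallel argument: first use $M_\phi(f)\ls M_{N''}(f)$ for some $N''$ depending on the Schwartz seminorms of $\phi$ (via $\phi/\|\phi\|_{\cs_{N''}(\rn)}\in\cs_{N''}(\rn)$), and then transfer from the non-tangential grand maximal function $M_{N''}$ to the radial one $M_N^0$ by the same truncation chain together with Lemma \ref{sl7}. The principal obstacle I anticipate is establishing the uniform pointwise Hardy-Littlewood bound relating $M_\phi^{(K,L)}$ to $M_\phi^{0(K,L)}$: the exponent $r<p_-$ has to be chosen so that $p(\cdot)/r\in C^{\log}(\rn)$ has lower index bounded away from $1$, and one must verify that the implicit constants are independent of $K$ and $L$. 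A secondary delicate point is the monotone limit: one needs to check that $[\max\{1,\rho(A^{-K}x)\}]^{-L}$ and $(1+b^{-k-K})^{-L}$ each tend monotonically upward to $1$ as $K\to\fz$ and $L\to 0^+$, so that Proposition \ref{sl6} applies to $M_N^{0(K,L)}(f)\uparrow M_N^0(f)$.
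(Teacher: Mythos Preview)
Your overall strategy is close to the paper's, but there is a genuine gap in the step you flag as ``critical'': the pointwise inequality
\[
M_\phi^{(K,L)}(f)(x)\ls\bigl[\HL\bigl((M_\phi^{0(K,L)}(f))^r\bigr)(x)\bigr]^{1/r}
\]
does \emph{not} hold for all $x\in\rn$ as a direct consequence of the covering relation \eqref{se1}. The standard proof of this estimate (see \cite[Chapter 7]{mb03} or \cite[(4.17)]{lyy16}) proceeds by controlling $|f\ast\phi_k(y)|$ for $y\in x+B_k$ in terms of averages of $|f\ast\phi_k|^r$ plus terms coming from derivatives of $f\ast\phi_k$; the derivative terms are bounded pointwise by the \emph{grand} radial maximal function $M_I^{0(K,L)}(f)$, not by $M_\phi^{0(K,L)}(f)$. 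Hence the inequality above is only available on the ``good set'' $\Omega_K:=\{x:\ M_I^{0(K,L)}(f)(x)\le C_4 M_\phi^{(K,L)}(f)(x)\}$, and one must show separately that the complement $\Omega_K^\com$ carries a negligible fraction of the $\vlpq$ norm of $M_\phi^{(K,L)}(f)$. The paper does exactly this: Lemmas \ref{tl1} and \ref{tl2} give $\|M_I^{0(K,L)}(f)\|_{\vlpq}\le C_3\|M_\phi^{(K,L)}(f)\|_{\vlpq}$, and choosing $C_4=2C_3$ forces $\|M_\phi^{(K,L)}(f)\|_{L^{p(\cdot),q}(\Omega_K^\com)}\le\frac12\|M_\phi^{(K,L)}(f)\|_{\vlpq}$, whence \eqref{te12}. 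Your proposal uses Lemmas \ref{tl1}--\ref{tl2} only to bound $M_N^{0(K,L)}$ by $M_\phi^{(K,L)}$ and then invokes the Hardy--Littlewood estimate as an independent geometric fact; without the good-set reduction this chain breaks.

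Two smaller remarks. First, in Lemma \ref{tl2} the index $I$ is produced by the lemma (depending on $N$ and $L$) rather than chosen freely, so you cannot ``arrange'' it to coincide with the $N$ of Definition \ref{sd3}; the paper handles this by noting (Remark \ref{sr2}(i)) that the space is independent of $N$ once $N$ is large enough, and $I$ from Lemma \ref{tl2} is always large. Second, the inequality $\|M_\phi(f)\|_{\vlpq}\le C\|f\|_{\vhlpq}$ needs no truncation chain: since $\phi/\|\phi\|_{\cs_N(\rn)}\in\cs_N(\rn)$ one has $M_\phi(f)(x)\le\|\phi\|_{\cs_N(\rn)}M_N(f)(x)$, and then Proposition \ref{sp1} gives $M_N(f)\sim M_N^0(f)$ pointwise, so \eqref{te2}$\Rightarrow$\eqref{te3} is immediate.
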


\begin{proof}
Obviously, \eqref{te2} implies \eqref{te3} and
\eqref{te3} implies \eqref{te4}.
Thus, to prove Theorem \ref{tt1}, it suffices
to show that \eqref{te3} implies \eqref{te2}
and that \eqref{te4} implies \eqref{te3}.

We first prove that \eqref{te3} implies \eqref{te2}.
To this end, notice that, by Lemma
\ref{tl2} with $N\in(1/p_-,\fz)\cap\mathbb{N}$ and
$L=0$, we find that there exists an $I\in\nn$ such that
$M_I^{0(K,0)}(f)(x)\ls T_\varphi^{N(K,0)}(f)(x)$
for any $x\in\rn$, $f\in\cs'(\rn)$ and $K\in\zz_+$.
From this and Lemma \ref{tl1}, we further deduce that,
for any $K\in\mathbb{Z}_+$ and $f\in\cs'(\rn)$,
\begin{align}\label{te9}
\lf\|M_I^{0(K,0)}(f)\r\|_{\vlpq}\ls
\lf\|M_\phi^{(K,0)}(f)\r\|_{\vlpq}.
\end{align}
Letting $K\to\infty$ in \eqref{te9}, by
Proposition \ref{sl6}, we know that
$$\lf\|M_I^0(f)\r\|_{\vlpq}
\ls\lf\|M_\phi(f)\r\|_{\vlpq},$$
which shows that \eqref{te3} implies \eqref{te2}.

Now we show that \eqref{te4} implies \eqref{te3}.
Assume now that $M_\phi^0(f)\in \vlpq$.
By Lemma \ref{tl3} via choosing $M\in(1/p_-,\fz)$, we find that there exists $L\in(0,\fz)$
such that \eqref{te10} holds true, which further implies that
$M_\phi^{(K,L)}(f)\in \vlpq$ for any $K\in\zz_+$.
Indeed, when $q\in(0,\fz)$ and $M\in(1/p_-,\fz)$, we have
\begin{align}\label{te20}
\lf\|M_\phi^{(K,L)}(f)\r\|_{\vlpq}^q&=\int_0^\fz\lz^{q-1}
\lf\|\chi_{\{x\in\rn:\ M_\phi^{(K,L)}(f)(x)>\lz\}}\r\|_{\lv}^q\,d\lz\\
&\ls\int_0^1\lz^{p-1}\lf\|\chi_{\{x\in\rn:\
M_\phi^{(K,L)}(f)(x)>\lz\}}\r\|_{L^{p(\cdot)}(B_1)}^q\,d\lz\noz\\
&\hs\hs+\sum_{k=1}^\fz\int_0^{b^{-kM}}\lz^{q-1}
\lf\|\chi_{\{x\in\rn:\ M_\phi^{(K,L)}(f)(x)>\lz\}}\r\|_
{L^{p(\cdot)}(B_{k+1}\setminus B_k)}^q\,d\lz\noz\\
&\ls\sum_{k=0}^\fz b^{-kMq}b^{\frac{(k+1)q}{p_-}}
\sim1.\noz
\end{align}
Clearly, when $q=\fz$, \eqref{te20} still holds true.
Thus, $M_\phi^{(K,L)}(f)\in \vlpq$.

On the other hand, by Lemmas \ref{tl2} and \ref{tl1},
we know that, for any $L\in(0,\fz)$, there exists  some $I\in\nn$
such that, for any $K\in\zz_+$ and $f\in\cs'(\rn)$,
$$\lf\|M_I^{0(K,L)}(f)\r\|_{\vlpq}\le C_3
\lf\|M_\phi^{(K,L)}(f)\r\|_{\vlpq},$$
where $C_3$ is a positive constant independent of $K$ and $f$.
For any fixed $K\in\zz_+$, let
\begin{align*}
\Omega_K:=\lf\{x\in\rn:\ M_I^{0(K,L)}(f)(x)\le C_4
M_\phi^{(K,L)}(f)(x)\r\}
\end{align*}
with $C_4:=2C_3$. Then
\begin{align}\label{te12}
\lf\|M_\phi^{(K,L)}(f)\r\|_{\vlpq}\ls
\lf\|M_\phi^{(K,L)}(f)\r\|_{L^{p(\cdot),q}(\Omega_K)},
\end{align}
because
$$\lf\|M_\phi^{(K,L)}(f)\r\|_{L^{p(\cdot),q}(\Omega_K
^\complement)}\le C_4^{-1}\lf\|M_I^{0(K,L)}(f)\r\|
_{L^{p(\cdot),q}(\Omega_K^\complement)}\le C_3/C_4
\lf\|M_\phi^{(K,L)}(f)\r\|_{\vlpq}.$$

For any given $L\in[0,\fz)$,
by an argument similar to that used in the proof of \cite[(4.17)]{lyy16},
we conclude that, for any $t\in(0,p_-)$, $K\in\zz_+$, $f\in\cs'(\rn)$
 and $x\in\Omega_K$,
\begin{align}\label{te13}
M_\phi^{(K,L)}(f)(x)\ls\lf\{M_{{\rm HL}}
\lf(\lf[M_\phi^{0(K,L)}(f)\r]^t\r)(x)\r\}^{1/t}.
\end{align}

Then, by \eqref{te12}, Lemma \ref{sl4}, \eqref{te13} and
Lemma \ref{sl7}, for any $K\in\zz_+$ and $f\in\cs'(\rn)$, we
further find that
\begin{align*}
\lf\|M_\phi^{(K,L)}(f)\r\|_{\vlpq}^t
&\ls\lf\|M_\phi^{(K,L)}(f)\r\|_{L^{p(\cdot),q}(\Omega_K)}^t
\sim\lf\|\lf[M_\phi^{(K,L)}(f)\r]^t\r\|_{L^{
\frac {p(\cdot)}t,\frac qt}(\Omega_K)}\\
&\ls\lf\|M_{{\rm HL}}\lf(\lf[M_\phi^{0(K,L)}(f)\r]^
t\r)\r\|_{L^{\frac {p(\cdot)}t,\frac qt}(\Omega_K)}\\
&\ls\lf\|\lf[M_\phi^{0(K,L)}(f)\r]^t\r\|_{L^{
\frac {p(\cdot)}t,\frac qt}(\Omega_K)}
\sim\lf\|M_\phi^{0(K,L)}(f)\r\|_{\vlpq}^t,
\end{align*}
which, together with the fact that
$M_\phi^{(K,L)}(f)$ and $M_\phi^{0(K,L)}(f)$
converge pointwisely and monotonically, respectively, to $M_\phi (f)$
and $M_\phi^0(f)$ as $K\to\infty$ and Proposition \ref{sl6},
implies that
\begin{align*}
\lf\|M_\phi (f)\r\|_{\vlpq}
\ls\lf\|M_\phi^0(f)\r\|_{\vlpq}.
\end{align*}
This shows that \eqref{te4} implies \eqref{te3} and hence finishes
the proof of Theorem \ref{tt1}.
\end{proof}

\section{Atomic characterization of $\vhlpq$\label{s4}}
\hskip\parindent
In this section, we establish the atomic characterization of $\vhlpq$.
We begin with the following notion of anisotropic $(p(\cdot),r,s)$-atoms.

\begin{definition}\label{fd1}
Let $p(\cdot)\in\cp(\rn)$,
$r\in(1,\fz]$ and
\begin{align}\label{fe1}
s\in\lf\lfloor\lf(\dfrac1{p_-}-1\r)
\dfrac{\ln b}{\ln\lambda_-}\r\rfloor\cap\zz_+.
\end{align}
A measurable function $a$ on $\rn$
is called an \emph{anisotropic $(p(\cdot),q,s)$-atom} if
\begin{enumerate}
\item[{\rm (i)}] $\supp a \st B$, where
$B\in\mathfrak{B}$ and $\mathfrak{B}$ is as in \eqref{se14};

\item[{\rm (ii)}] $\|a\|_{L^r(\rn)}\le \frac{|B|^{1/r}}{\|\chi_B\|_{\lv}}$;

\item[{\rm (iii)}] $\int_{\mathbb R^n}a(x)x^\az\,dx=0$ for any $\az\in{\zz}_+^n$
with $|\az|\le s$.
\end{enumerate}
\end{definition}

For the presentation simplicity, throughout this article, we call an anisotropic
$(p(\cdot),r,s)$-atom simply by a $(p(\cdot),r,s)$-atom.
Now, via $(p(\cdot),r,s)$-atoms, we introduce the notion
of the anisotropic variable atomic Hardy-Lorentz space
$\vahlpq$ as follows.

\begin{definition}\label{fd2}
Let $p(\cdot)\in C^{\log}(\rn)$, $q\in(0,\fz]$, $r\in(1,\fz]$, $s$ be
as in \eqref{fe1} and $A$ be a dilation.
The \emph{anisotropic variable atomic Hardy-Lorentz space}
$\vahlpq$ is defined to be the set of all distributions
$f\in\cs'(\rn)$ satisfying that there exist a sequence of $(p(\cdot),r,s)$-atoms,
$\{a_i^k\}_{i\in\mathbb{N},\,k\in\mathbb{Z}}$,
supported, respectively, on
$\{B_i^k\}_{i\in\mathbb{N},\,k\in\mathbb{Z}}\subset\mathfrak{B}$
and a positive constant $\widetilde{C}$ such that
$\sum_{i\in\mathbb{N}}\chi_{A^{j_0}B_i^k}(x)\le \widetilde{C}$
for any $x\in\rn$ and $k\in\mathbb{Z}$,
with some $j_0\in\zz\setminus\nn$, and
\begin{align}\label{fe2}
f=\sum_{k\in\mathbb{Z}}\sum_{i\in\mathbb{N}}\lambda_i^ka_i^k
\quad\mathrm{in}\quad\cs'(\rn),
\end{align}
where $\lambda_i^k\sim2^k\|\chi_{B_i^k}\|_{\lv}$ for any
$k\in\mathbb{Z}$ and $i\in\mathbb{N}$ with the equivalent
positive constants independent of $k$ and $i$.

Moreover, for any $f\in\vahlpq$, define
\begin{align*}
\|f\|_{\vahlpq}:=
{\rm inf}\lf[\sum_{k\in\zz}\lf\|\lf\{\sum_{i\in\nn}
\lf[\frac{\lz_i^k\chi_{B_i^k}}{\|\chi_{B_i^k}\|_{\lv}}\r]^
{\underline{p}}\r\}^{1/\underline{p}}\r\|_{\lv}^q\r]^{\frac 1q}
\end{align*}
with the usual interpretation for $q=\fz$, where the
infimum is taken over all decompositions of $f$ as above.
\end{definition}

In order to establish the atomic decomposition of
$\vhlpq$, we need several technical lemmas as follows.
First, observe that $(\rn,\,\rho,\,dx)$ is an RD-space
(see \cite{hmy08,zy11}). From this and
\cite[Theorem 2.7]{zsy16}, we deduce the  following
Fefferman-Stein vector-valued inequality of the maximal
operator $\HL$ on the variable Lebesgue space $\lv$,
the details being omitted.

\begin{lemma}\label{fl1}
Let $r\in(1,\fz]$.
Assume that $p(\cdot)\in C^{\log}(\rn)$ satisfies $1<p_-\le p_+<\fz$.
Then there exists a positive
constant $C$ such that, for any
sequence $\{f_k\}_{k\in\nn}$ of measurable functions,
$$\lf\|\lf\{\sum_{k\in\nn}
\lf[\HL(f_k)\r]^r\r\}^{1/r}\r\|_{\lv}
\le C\lf\|\lf(\sum_{k\in\nn}|f_k|^r\r)^{1/r}\r\|_{\lv}$$
with the usual modification made when $r=\fz$,
where $\HL$ denotes the Hardy-Littlewood maximal operator as in \eqref{se5}.
\end{lemma}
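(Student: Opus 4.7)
The plan is to recognize that Lemma \ref{fl1} is essentially a direct specialization of the abstract Fefferman-Stein vector-valued maximal inequality on variable Lebesgue spaces over RD-spaces, namely \cite[Theorem 2.7]{zsy16}, which the authors explicitly cite. The core structural observation, already recorded in Section \ref{s2}, is that the triple $(\rn,\rho,dx)$, where $\rho$ is the step homogeneous quasi-norm associated with the dilation $A$ and $dx$ is the Lebesgue measure, forms an RD-space in the sense of \cite{hmy08,zy11}.

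First, I would verify that the geometric framework matches. The dilated balls $B_k=A^k\Delta$ satisfy $|B_k|=b^k$ with $b\in(1,\fz)$, so the doubling property $|B_{k+\tau}|=b^\tau|B_k|$ and the reverse doubling $|B_{k-1}|=b^{-1}|B_k|$ hold trivially for the family $\mathfrak{B}$ from \eqref{se14}. The quasi-triangle inequality for $\rho$ is Definition \ref{sd2}(iii). The Hardy-Littlewood maximal operator $\HL$ from \eqref{se5} is precisely the centered maximal operator over $\mathfrak{B}$, which is the operator to which \cite[Theorem 2.7]{zsy16} applies, and the log-H\"older conditions \eqref{se6} and \eqref{se7} are formulated with respect to $\rho$, matching the formulation used in \cite{zsy16}.

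Second, I would check that the hypotheses translate cleanly. The assumptions $p(\cdot)\in C^{\log}(\rn)$ and $1<p_-\le p_+<\fz$ are exactly those under which \cite[Theorem 2.7]{zsy16} yields the vector-valued inequality for $r\in(1,\fz)$. Since the underlying measure is the same Lebesgue measure, the modular $\varrho_{p(\cdot)}$ underlying the norm on $\lv$ coincides with the one used in the RD-space formulation, so the two quasi-norms agree. For the endpoint $r=\fz$, the norm is interpreted with $(\sum_k|f_k|^r)^{1/r}$ replaced by $\sup_k|f_k|$, and the inequality then reduces via the pointwise bound $\sup_k\HL(f_k)\le\HL(\sup_k|f_k|)$ to the scalar boundedness of $\HL$ on $\lv$, which is Lemma \ref{sl7} (equivalently, Lemma \ref{sl1}(ii) applied with $s=1$).

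The only (mild) obstacle is purely notational, namely aligning the RD-space formalism of \cite{zsy16} with the dilation-based anisotropic formalism of \cite{mb03} followed throughout the present paper; no new analytic input is required beyond what is contained in \cite[Theorem 2.7]{zsy16}. This justifies the authors' choice to omit the details.
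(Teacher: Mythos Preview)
Your proposal is correct and mirrors the paper's own justification: the paper simply records that $(\rn,\rho,dx)$ is an RD-space and then invokes \cite[Theorem 2.7]{zsy16} directly, omitting the verification details that you have supplied. One minor slip: the scalar boundedness of $\HL$ on $\lv$ needed for the $r=\fz$ endpoint is Lemma \ref{sl1}(ii), not Lemma \ref{sl7} (which concerns $\vlpq$), though your parenthetical already points to the right reference.
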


\begin{remark}\label{fr1}
\begin{enumerate}
\item[{\rm (i)}]
Let $p(\cdot)\in C^{\log}(\rn)$  and $i\in\zz_+$. Then, by Lemma \ref{fl1}
and the fact that, for any dilated ball $B\in\mathfrak{B}$ and $r\in(0,\underline{p})$,
$\chi_{A^iB}\le b^{\frac{i}{r}}[\HL(\chi_B)]^{\frac{1}{r}}$,
we conclude that there exists a positive
constant $C$ such that, for any sequence $\{B^{(k)}\}_{k\in\nn}\st\mathfrak{B}$,
$$\lf\|\sum_{k\in\nn}\chi_{A^i B^{(k)}}\r\|_{\lv}\le
C b^{\frac{i}{r}}\lf\|\sum_{k\in\nn}\chi_{B^{(k)}}\r\|_{\lv}.$$

\item[{\rm (ii)}]
Let $f\in\vahlpq$. By the definition of $\vahlpq$,
we know that there exists a sequence $\{a_i^k\}_{i\in\nn,k\in\zz}$ of
$(p(\cdot),r,s)$-atoms supported, respectively, on
$\{B_i^k\}_{i\in\mathbb{N},\,k\in\mathbb{Z}}\subset\mathfrak{B}$,
satisfying that, for any $k\in\zz$ and $x\in\rn$,
$\sum_{i\in\mathbb{N}}\chi_{A^{j_0}B_i^k}(x)\le \widetilde{C}$ with
some $j_0\in\zz\setminus\nn$ and $\widetilde{C}$ being a positive constant
independent of $k$ and $x$ such that $f$ admits a decomposition as in \eqref{fe2}
with $\lambda_i^k\sim2^k\|\chi_{B_i^k}\|_{\lv}$ for any
$k\in\mathbb{Z}$ and $i\in\mathbb{N}$, where the equivalent
positive constants are independent of $k$ and $i$, and
\begin{align*}
\|f\|_{\vahlpq}\sim\lf[\sum_{k\in\zz}\lf\|\lf\{\sum_{i\in\nn}
\lf[\frac{\lz_i^k\chi_{B_i^k}}{\|\chi_{B_i^k}\|_{\lv}}\r]^
{\underline{p}}\r\}^{1/\underline{p}}\r\|_{\lv}^q\r]^{\frac 1q}
\end{align*}
with the equivalent positive constants independent of $f$.
Moreover, by
$\sum_{i\in\mathbb{N}}\chi_{A^{j_0}B_i^k}\le \widetilde{C}$
for any $k\in\zz$, the definition of $\lz_i^k$
and (i) of this remark, we further conclude that
\begin{align*}
\|f\|_{\vahlpq}&\sim\lf[\sum_{k\in\zz}2^{kq}\lf\|\lf(\sum_{i\in\nn}
\chi_{B_i^k}\r)^{1/{\underline{p}}}\r\|_{\lv}^q\r]^{\frac1q}\\
&\sim\lf[\sum_{k\in\zz}2^{kq}\lf\|\lf(\sum_{i\in\nn}
\chi_{A^{j_0}B_i^k}\r)^{1/{\underline{p}}}\r\|_{\lv}^q\r]^{\frac1q}\\
&\sim\lf[\sum_{k\in\zz}2^{kq}\lf\|\sum_{i\in\nn}
\chi_{A^{j_0}B_i^k}\r\|_{\lv}^q\r]^{\frac1q}
\sim\lf[\sum_{k\in\zz}2^{kq}\lf\|\sum_{i\in\nn}
\chi_{B_i^k}\r\|_{\lv}^q\r]^{\frac1q},
\end{align*}
where the equivalent positive constants are independent of $f$.
\end{enumerate}
\end{remark}

We also need the following useful technical lemma,
whose proof is similar to that of \cite[Lemma 4.1]{s10},
the details being omitted.

\begin{lemma}\label{fl2}
Let $p(\cdot)\in C^{\log}(\rn)$, $t\in(0,\underline{p}]$ and
$r\in[1,\fz]\cap(p_+,\fz]$. Then there exists
a positive constant $C$ such that, for any sequence
$\{B^{(k)}\}_{k\in\nn}\st\mathfrak{B}$ of dilated balls,
numbers $\{\lz_k\}_{k\in\nn}\subset\mathbb C$
and measurable functions $\{a_k\}_{k\in\nn}$ satisfying that,
for each $k\in\nn$, $\supp a_k\st B^{(k)}$ and
$\|a_k\|_{L^r(\rn)}\le |B^{(k)}|^{1/r}$,
it holds true that
$$\lf\|\lf(\sum_{k\in\nn}\lf|\lz_k a_k\r|^t\r)^{\frac1t}
\r\|_{\lv}\le C\lf\|\lf(\sum_{k\in\nn}\lf|\lz_k\chi_{B^{(k)}}\r|^t\r)
^{\frac1t}\r\|_{\lv}.$$
\end{lemma}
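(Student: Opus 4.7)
The plan is to adapt the duality/maximal-function argument used in \cite[Lemma 4.1]{s10} to the present anisotropic variable-exponent setting. First, applying Remark \ref{sr1}(i) to identify $\|\cdot\|_{\lv}$ with the $1/t$-th power of an $L^{p(\cdot)/t}(\rn)$ norm, the claim reduces to
$$\left\|\sum_{k\in\nn}|\lz_k a_k|^t\right\|_{L^{p(\cdot)/t}(\rn)}\ls \left\|\sum_{k\in\nn}|\lz_k|^t\chi_{B^{(k)}}\right\|_{L^{p(\cdot)/t}(\rn)}.$$
Since $t\le\underline{p}\le p_-$, the exponent $p(\cdot)/t$ satisfies $(p(\cdot)/t)_-\ge 1$, so $L^{p(\cdot)/t}(\rn)$ is a Banach space whose norm is, up to equivalence, given by pairing against nonnegative $h$ in the associate space $L^{(p(\cdot)/t)'}(\rn)$ with $\|h\|_{L^{(p(\cdot)/t)'}(\rn)}\le 1$ (see, for example, \cite{dhhr11,cf13}).

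Fix such an $h$. Applying H\"older's inequality with exponents $r/t$ and $(r/t)'$ on each $B^{(k)}$, together with the size estimate $\|a_k\|_{L^r(\rn)}\le |B^{(k)}|^{1/r}$, yields
$$\int_{B^{(k)}}|a_k|^t h\,dx\le |B^{(k)}|^{t/r}\left(\int_{B^{(k)}}h^{(r/t)'}\,dx\right)^{1/(r/t)'}.$$
The elementary pointwise bound $\frac{1}{|B^{(k)}|}\int_{B^{(k)}}h^{(r/t)'}\,dy\le \HL(h^{(r/t)'})(x)$ for every $x\in B^{(k)}$, followed by averaging in $x$ over $B^{(k)}$, then gives
$$\int_{B^{(k)}}|a_k|^t h\,dx\le \int_{B^{(k)}}\left[\HL\left(h^{(r/t)'}\right)(x)\right]^{1/(r/t)'}\,dx.$$
Multiplying by $|\lz_k|^t$ and summing in $k$ produces
$$\int_{\rn}\left(\sum_{k\in\nn}|\lz_k a_k|^t\right) h\,dx\le \int_{\rn}\left(\sum_{k\in\nn}|\lz_k|^t\chi_{B^{(k)}}\right)\left[\HL\left(h^{(r/t)'}\right)\right]^{1/(r/t)'}\,dx.$$

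To finish, I would apply H\"older's inequality between $L^{p(\cdot)/t}(\rn)$ and $L^{(p(\cdot)/t)'}(\rn)$ on the right-hand side and then use Remark \ref{sr1}(i) once more to rewrite
$$\left\|\left[\HL\left(h^{(r/t)'}\right)\right]^{1/(r/t)'}\right\|_{L^{(p(\cdot)/t)'}(\rn)}=\left\|\HL\left(h^{(r/t)'}\right)\right\|_{L^{(p(\cdot)/t)'/(r/t)'}(\rn)}^{1/(r/t)'}.$$
The essential infimum of the exponent $(p(\cdot)/t)'/(r/t)'$ computes to $\frac{p_-(r-t)}{(p_--t)r}$, which strictly exceeds $1$ precisely because $r>p_+\ge p_-$, so Lemma \ref{sl1}(ii) bounds the right-hand side above by a constant multiple of $\|h\|_{L^{(p(\cdot)/t)'}(\rn)}\le 1$. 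Taking the supremum over admissible $h$ yields the reduced inequality, and a final application of Remark \ref{sr1}(i) gives the lemma. The main technical point I expect to face is verifying that $(p(\cdot)/t)'/(r/t)'$ lies in $C^{\log}(\rn)$ (which follows because $C^{\log}(\rn)$ is preserved under the relevant algebraic operations on variable exponents) and carefully handling the boundary case $p_-=t$, where $(p(\cdot)/t)'$ may be infinite on a set of positive measure; this is resolved through the standard Luxemburg-norm formulation of the associate space of $L^{p(\cdot)/t}(\rn)$.
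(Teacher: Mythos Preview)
Your proposal is correct and is precisely the duality/maximal-function adaptation of \cite[Lemma 4.1]{s10} that the paper invokes (the paper itself omits the details and simply refers to Sawano's argument). One minor computational slip: since $p\mapsto \frac{p(r-t)}{r(p-t)}$ is decreasing on $(t,\infty)$, the essential infimum of $(p(\cdot)/t)'/(r/t)'$ equals $\frac{p_+(r-t)}{r(p_+-t)}$ rather than the expression with $p_-$, and it is exactly the hypothesis $r>p_+$ that makes this exceed $1$; your conclusion and the invocation of Lemma~\ref{sl1}(ii) remain valid.
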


The following Proposition \ref{sp1} and Lemma \ref{fl3}
are just \cite[p.\,17, Proposition 3.10 and p.\,9, Lemma 2.7 ]{mb03},
respectively.

\begin{proposition}\label{sp1}
For any given $N\in\mathbb{N}$,
there exists a positive constant $C_{(N)}$,
depending only on $N$, such that,
for any $f\in\cs'(\rn)$ and $x\in\rn$,
\begin{equation*}
M_N^0(f)(x)\le M_N(f)(x)
\le C_{(N)}M_N^0(f)(x),
\end{equation*}
where $M_N^0(f)$ and $M_N(f)$ are as in Definition \ref{sd1}.
\end{proposition}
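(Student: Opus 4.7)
The first inequality $M_N^0(f)(x) \le M_N(f)(x)$ is immediate from the definitions, since taking $y = x$ (corresponding to $y \in x + B_k$ with the choice of that particular point) in the supremum defining $M_\varphi(f)(x)$ recovers $M_\varphi^0(f)(x)$. So the substantive work lies in the reverse inequality $M_N(f)(x) \le C_{(N)} M_N^0(f)(x)$.

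My plan is a translation argument. Fix $\varphi \in \cs_N(\rn)$, $k \in \zz$ and $y \in x + B_k$. The basic observation is that
\begin{equation*}
(f \ast \varphi_k)(y) = \int_{\rn} f(w)\, b^{-k}\varphi\!\left(A^{-k}(y-w)\right) dw = (f \ast \psi_k)(x),
\end{equation*}
where $\psi(u) := \varphi\!\left(u + A^{-k}(y-x)\right)$. Since $y - x \in B_k$, the translation vector $v := A^{-k}(y-x)$ lies in $A^{-k} B_k = B_0 = \Delta$, a fixed bounded set, and in particular $\rho(v) \le b^{-1} < 1$.

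The key step is to show that $\psi$, after division by a constant $C_{(N)}$ depending only on $N$, still belongs to $\cs_N(\rn)$. For any multi-index $\alpha$ with $|\alpha| \le N$, we have $\partial^\alpha \psi(z) = (\partial^\alpha \varphi)(z+v)$, so that $|\partial^\alpha \psi(z)| \le [\max\{1, [\rho(z+v)]^N\}]^{-1}$ by the assumption $\varphi \in \cs_N(\rn)$. Applying the quasi-triangle inequality in Definition \ref{sd2}(iii) twice gives
\begin{equation*}
\rho(z) \le H[\rho(z+v) + \rho(-v)] \le H\,\rho(z+v) + H\rho(v),
\end{equation*}
and since $\rho(v) < 1$, when $\rho(z) \ge 2H$ we obtain $\rho(z+v) \ge \rho(z)/(2H)$; when $\rho(z) < 2H$ the factor $\max\{1, [\rho(z)]^N\}$ is bounded by a constant depending only on $N$ and $H$. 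These two cases combine to yield $\max\{1, [\rho(z)]^N\} \le C_{(N)} \max\{1, [\rho(z+v)]^N\}$ for a constant $C_{(N)}$ depending only on $N$, so that $\|\psi/C_{(N)}\|_{\cs_N(\rn)} \le 1$.

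Consequently, $|(f \ast \varphi_k)(y)| = |(f \ast \psi_k)(x)| \le C_{(N)}\,|(f \ast (\psi/C_{(N)})_k)(x)| \le C_{(N)} M_N^0(f)(x)$, and taking the supremum over $\varphi \in \cs_N(\rn)$, $k \in \zz$ and $y \in x + B_k$ yields the desired inequality. The main technical obstacle is the Schwartz seminorm comparison between $\psi$ and $\varphi$, which hinges on the fact that dilated translations by $v \in \Delta$ distort the quasi-norm $\rho$ only by a bounded multiplicative factor at infinity; the homogeneous quasi-triangle inequality together with $|v|$ being bounded uniformly in $k$ handles exactly this point.
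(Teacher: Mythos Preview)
The paper does not prove this proposition itself; it merely cites it as \cite[p.\,17, Proposition 3.10]{mb03}. Your translation argument is correct and is the standard approach: the uniform containment $A^{-k}(y-x)\in B_0$ for $y\in x+B_k$, together with the quasi-triangle inequality for $\rho$, yields precisely the $\cs_N$-seminorm comparison needed. Two minor points: you use the quasi-triangle inequality only once (not ``twice''), followed by the symmetry $\rho(-v)=\rho(v)$, which is valid because the ellipsoid $\Delta$ is centered at the origin; and since $f\in\cs'(\rn)$, the displayed integral for the convolution should strictly be read as a distributional pairing, though the identity $(f\ast\varphi_k)(y)=(f\ast\psi_k)(x)$ remains valid in that sense.
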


\begin{lemma}\label{fl3}
Let $\Omega\subset\rn$ be an open set with $|\Omega|<\infty$.
Then, for any $m\in\zz_+$, there exist a sequence of points,
$\{x_k\}_{k\in\mathbb{N}}\subset\Omega$, and a sequence of
integers, $\{\ell_k\}_{k\in\mathbb{N}}\subset\mathbb{Z}$,
such that

\begin{enumerate}
\item[{\rm(i)}] $\Omega=\bigcup_{k\in\mathbb{N}}
(x_k+B_{\ell_k})$;

\item[{\rm(ii)}] $\lf\{x_k+B_{\ell_k-\tau}\r\}_{k\in\nn}$ are pairwise
disjoint, where $\tau$ is as in \eqref{se1} and \eqref{se2};

\item[{\rm(iii)}] for each $k\in\mathbb{N},\
(x_k+B_{\ell_k+m})\cap\Omega^\complement
=\emptyset$, but $(x_k+B_{\ell_k+m+1})\cap
\Omega^\complement\neq\emptyset$;

\item[{\rm(iv)}] $if\ (x_i+B_{\ell_i+m-2\tau})\cap
(x_j+B_{\ell_j+m-2\tau})\neq\emptyset$, then
$|\ell_i-\ell_j|\le\tau$;

\item[{\rm(v)}] for any $i\in\mathbb{N}$,
$\sharp\{j\in\mathbb{N}:\ (x_i+B_
 {\ell_i+m-2\tau})\cap(x_j+B_{\ell_j+m-2\tau})
 \neq\emptyset\}\le R$,
where $R$ is a positive constant
independent of $\Omega$ and $i$.
\end{enumerate}
\end{lemma}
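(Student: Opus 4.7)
The plan is to carry out an anisotropic Whitney-type decomposition of $\Omega$. For each $x\in\Omega$, since $\Omega$ is open while $|\Omega|<\fz$, there exists a unique $\ell(x)\in\zz$ such that $x+B_{\ell(x)+m}\st\Omega$ but $x+B_{\ell(x)+m+1}\not\st\Omega$; an upper bound for $\ell(\cdot)$ comes from $b^{\ell(x)+m}=|x+B_{\ell(x)+m}|\le|\Omega|$. This scale function already encodes property (iii) at every point of $\Omega$.

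Next I would run a Vitali-type selection on the family $\mathcal{F}:=\{x+B_{\ell(x)-\tau}:\ x\in\Omega\}$, processing in order of decreasing $\ell$. At the top scale, extract a maximal pairwise disjoint subcollection; inductively, at each lower scale, keep only those balls disjoint from all balls already selected. Enumerate the chosen centers and scales as $\{(x_k,\ell_k)\}_{k\in\nn}$. Property (ii) is then immediate by construction, and (iii) is inherited from $\ell(\cdot)$. To obtain (i), given any $y\in\Omega$, either $y$ itself is selected or maximality at its scale $\ell(y)$ forces $y+B_{\ell(y)-\tau}$ to intersect some selected $x_k+B_{\ell_k-\tau}$ with $\ell_k\ge\ell(y)$; combining this intersection with \eqref{se1} shows $y\in x_k+B_{\ell_k}$.

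The verification of (iv) is the most delicate step. Suppose $(x_i+B_{\ell_i+m-2\tau})\cap(x_j+B_{\ell_j+m-2\tau})\ne\emptyset$ with $\ell_i\le\ell_j$. By (iii) at index $i$, there exists $p\in(x_i+B_{\ell_i+m+1})\cap\Omega^\complement$; combining the quasi-triangle inequality from Definition \ref{sd2}(iii) with several applications of $B_k+B_k\st B_{k+\tau}$ (see \eqref{se1}), if one assumed $\ell_j\ge\ell_i+\tau+1$ then a short chain of inclusions would yield $p\in x_j+B_{\ell_j+m}$, contradicting the other half of (iii) at index $j$. Hence $|\ell_i-\ell_j|\le\tau$. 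Property (v) now follows from (ii) and (iv) by a volume-counting argument: for any $k$ with $(x_i+B_{\ell_i+m-2\tau})\cap(x_k+B_{\ell_k+m-2\tau})\ne\emptyset$, the scale $\ell_k$ is within $\tau$ of $\ell_i$ and, by the quasi-triangle inequality, the disjoint ball $x_k+B_{\ell_k-\tau}$ lies inside a fixed dilate of $x_i+B_{\ell_i}$; disjointness plus $|B_\ell|=b^\ell$ then bounds the cardinality by a constant $R=R(\tau,b,m)$ independent of $\Omega$ and $i$.

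The main obstacle will be the careful bookkeeping of the constants $\tau$ and $m$ in step (iv): one must use \eqref{se1} and \eqref{se2} precisely to transfer the complement-witness $p$ near $x_i$ into the ball $x_j+B_{\ell_j+m}$, and this is exactly where the enlargement factor $m-2\tau$ in the statement is tuned so that the two halves of (iii) can collide to give a contradiction.
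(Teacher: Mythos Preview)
The paper does not prove this lemma at all; it simply quotes it verbatim from Bownik's memoir \cite[p.\,9, Lemma 2.7]{mb03}. Your sketch is a correct direct argument for the anisotropic Whitney decomposition and is essentially the same as the proof in \cite{mb03}: define the critical scale $\ell(x)$ via (iii), run a greedy selection on the shrunken balls $x+B_{\ell(x)-\tau}$ from large scales downward, and then use \eqref{se1}--\eqref{se2} to verify the five properties. Your handling of (iv) is right --- the assumption $\ell_j\ge\ell_i+\tau+1$ together with the intersection hypothesis and \eqref{se1} forces the complement-witness $p\in(x_i+B_{\ell_i+m+1})\cap\Omega^\complement$ into $x_j+B_{\ell_j+m}\subset\Omega$, a contradiction --- and (v) follows by the volume count you indicate. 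One small point worth making explicit: at each fixed scale $\ell$ the maximal disjoint family is \emph{finite} because $b^{\ell-\tau}\cdot\sharp\{\text{selected balls at scale }\ell\}\le|\Omega|<\infty$, so the total family is countable and the indexing by $\mathbb{N}$ is legitimate.
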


Now, it is a position to state the main result of this section.

\begin{theorem}\label{ft1}
Let $p(\cdot)\in C^{\log}(\rn)$, $q\in(0,\fz]$,
$r\in(\max\{p_+,1\},\fz]$ with $p_+$ as in \eqref{se3} and $s$ be as in
\eqref{fe1}.
Then $\vhlpq=\vahlpq$ with equivalent quasi-norms.
\end{theorem}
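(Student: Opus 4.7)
The plan is to establish the two continuous inclusions $\vahlpq\st\vhlpq$ and $\vhlpq\st\vahlpq$ separately, the latter being substantially harder. For the forward direction, I would start from any admissible atomic decomposition $f=\sum_{k\in\zz}\sum_{i\in\nn}\lik\aik$ as in Definition \ref{fd2}, so that Remark \ref{fr1}(ii) applies. By Lemma \ref{sl3*}, it suffices to control $\lf\|\chi_{\{x\in\rn:\,M_N^0(f)(x)>2^{k_0}\}}\r\|_\lv$ uniformly in $k_0\in\zz$ and then sum in $k_0$ with weight $2^{k_0 q}$. I would split $f=f_1+f_2$ according to $k\le k_0$ versus $k>k_0$: the piece $f_2$ is dispatched by the trivial inclusion $\{x\in\rn:\,M_N^0(f_2)(x)>2^{k_0-1}\}\st\bigcup_{k>k_0,\,i\in\nn}A^{j_1}\Bik$ for some sufficiently large fixed $j_1\in\nn$, combined with Remark \ref{fr1}(i). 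For $f_1$, I would use the vanishing moments of order $s$ (with $s$ as in \eqref{fe1}) to obtain the standard pointwise bound $M_N^0(\aik)(x)\ls [\HL(\chi_\Bik)(x)]^{1/\underline p}/\|\chi_\Bik\|_\lv$ for $x\notin A^{j_1}\Bik$, invoke Lemma \ref{fl2} on $A^{j_1}\Bik$ to replace the atoms by characteristic functions of their supports, and then appeal to the Fefferman-Stein inequality of Lemma \ref{fl1} to sum in $i$.

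For the harder inclusion $\vhlpq\st\vahlpq$, the skeleton is a Calder\'on-Zygmund decomposition on the level sets $\Omega_k:=\{x\in\rn:\,M_N^0(f)(x)>2^k\}$, $k\in\zz$: applying Lemma \ref{fl3} to each $\Omega_k$ yields a Whitney-type cover by dilated balls, whose controlled dilates I denote by $\{\Bik\}_{i\in\nn}\st\mathfrak B$ with the bounded-overlap property $\sum_{i\in\nn}\chi_{A^{j_0}\Bik}\le\widetilde C$ required by Definition \ref{fd2}. From this cover I would manufacture candidate $(p(\cdot),r,s)$-atoms $\aik$ by localizing $f$ to each ball in the cover and subtracting its $L^2$-polynomial projection of degree at most $s$, with coefficients $\lik\sim 2^k\|\chi_\Bik\|_\lv$; the $L^r$ size estimate will follow from the good-$\lz$ property of the Whitney construction (namely $(\xik+B_{\ell_i^k+m})\cap\Omega_k^\complement\ne\emptyset$ from Lemma \ref{fl3}(iii)) together with the definition of $M_N^0$.

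The main obstacle, flagged in the introduction, is that when $q=\fz$ the space $H_A^{p(\cdot),\fz}(\rn)$ lacks a dense subspace of $L^1_\loc$ functions, so the standard route---first atomizing $L^1_\loc\cap\vhlpq$ functions and then passing to the limit---is unavailable. Following the strategy of \cite[Theorem 3.6]{lyy16} (see also \cite{dl08}), my plan is to perform the Calder\'on-Zygmund decomposition directly on the smooth regularizations $f\ast\phi_j$ (with $\phi\in\cs(\rn)$, $\int_\rn\phi\,dx\ne0$) at each height $2^k$, producing atoms and coefficients that are controlled uniformly in $j$, and then pass $j\to\fz$ in $\cs'(\rn)$ to obtain the decomposition of $f$ itself. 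This regularization step is the technical heart of the argument; verifying that the atomic series assembled from $\{f\ast\phi_j\}_j$ genuinely converges to $f$ in $\cs'(\rn)$, with a quasi-norm estimate that does not deteriorate as $j\to\fz$, is the subtlest point I anticipate.

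The quasi-norm comparison then closes both sides. Combining Remark \ref{fr1}(ii), the bounded overlap from Lemma \ref{fl3}(v), and the identity $\Omega_k=\bigcup_{i\in\nn}(\xik+B_{\ell_i^k})$ from Lemma \ref{fl3}(i), together with Lemma \ref{sl3*}, I obtain
\begin{align*}
\|f\|_\vahlpq
&\sim\lf[\sum_{k\in\zz}2^{kq}\lf\|\sum_{i\in\nn}\chi_\Bik\r\|_\lv^q\r]^{1/q}\\
&\sim\lf[\sum_{k\in\zz}2^{kq}\lf\|\chi_{\Omega_k}\r\|_\lv^q\r]^{1/q}
\sim\lf\|M_N^0(f)\r\|_\vlpq=\|f\|_\vhlpq,
\end{align*}
with the usual modification for $q=\fz$, which will yield both the equality of sets and the equivalence of quasi-norms asserted in the theorem.
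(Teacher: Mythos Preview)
Your overall architecture matches the paper's, but there is a genuine gap in the forward inclusion $\vahlpq\st\vhlpq$: the ``trivial inclusion'' you invoke for $f_2$ is false. The grand maximal function of an atom $\aik$ is not compactly supported; off $A^{j_1}\Bik$ one only has the polynomial decay $\lik M_N^0(\aik)(x)\ls 2^k[\HL(\chi_\Bik)(x)]^\beta$ (cf.\ \eqref{fe9}). Summing these tails over \emph{all} $k>k_0$ and $i\in\nn$ at a point $x\notin\bigcup_{k>k_0,\,i}A^{j_1}\Bik$ gives no uniform bound by $2^{k_0-1}$, regardless of how large $j_1$ is fixed. The paper instead splits the $f_2$ contribution according to whether $x\in E_{k_0}:=\bigcup_{k\ge k_0}\bigcup_i A^\tau\Bik$ or $x\in(E_{k_0})^\com$; the first piece (${\rm J}_2$) is controlled directly by $\|\chi_{E_{k_0}}\|_\lv$ via Remark \ref{fr1}(i), while the second (${\rm J}_3$) needs the tail estimate \eqref{fe9} together with Lemma \ref{fl1}, exactly parallel to your treatment of the off-ball part of $f_1$. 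So $f_2$ is not in fact easier than $f_1$ and requires the same two-region analysis.

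For the reverse inclusion your sketch is essentially the paper's, but one detail is worth sharpening: the Whitney decomposition of Lemma \ref{fl3} is applied to the level sets $\Omega_k:=\{x:\,M_N(f)(x)>2^k\}$ of the \emph{original} distribution $f$, not of the mollifications $f^{(\nu)}:=f\ast\phi_{-\nu}$. This makes the cover $\{x_i^k+B_{\ell_i^k}\}_i$ and the coefficients $\lik$ independent of $\nu$, which is what allows the paper to run a weak-$\ast$ compactness argument (Alaoglu) on the uniformly bounded family $\{h_i^{\nu,k}\}_\nu\subset L^\infty(\rn)$ and extract limiting atoms $h_i^k$ supported on the \emph{same} balls. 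Your phrase ``pass $j\to\fz$ in $\cs'(\rn)$'' hides this mechanism; without fixing the geometry independently of the regularization parameter, the limiting procedure does not obviously produce atoms with the required supports and size bounds.
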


\begin{proof}
First, we show that
\begin{align}\label{fe4}
\vahlpq\subset\vhlpq.
\end{align}
To this end, for any $f\in\vahlpq$, by Remark \ref{fr1}(ii),
we find that there exists a sequence of $(p(\cdot),r,s)$-atoms,
$\{a_i^k\}_{i\in\mathbb{N},\,k\in\mathbb{Z}}$,
supported, respectively, on
$\{B_i^k\}_{i\in\mathbb{N},\,k\in\mathbb{Z}}\st\mathfrak{B}$ such that
$$f=\sum_{k\in\mathbb{Z}}
\sum_{i\in\mathbb{N}}\lambda_i^ka_i^k\quad \mathrm{in}\quad \cs'(\rn),$$
where $\lambda_i^k\sim2^k\|\chi_{B_i^k}\|_{\lv}$ for any
$k\in\mathbb{Z}$ and
$i\in\mathbb{N}$, $\sum_{i\in\mathbb{N}}
\chi_{A^{j_0}B_i^k}(x)\ls1$ with some $j_0\in\zz\setminus\nn$
for any $x\in\rn$ and $k\in\mathbb{Z}$, and
\begin{align}\label{fe5}
\|f\|_{\vahlpq}
\sim\lf[\sum_{k\in\zz}2^{kq}\lf\|\sum_{i\in\nn}
\chi_{B_i^k}\r\|_{\lv}^q\r]^{\frac1q}.
\end{align}
For the notational simplicity, in what follows of this proof,
for any $f\in\cs'(\rn)$, we \emph{denote $M_N^0(f)$ simply by $M(f)$}
when $N\in\mathbb{N}\cap[\lfloor(\frac1{\underline{p}}-1)
\frac{\ln b}{\ln\lambda_-}\rfloor+2,\fz)$.
To prove $f\in\vhlpq$, by Definition \ref{sd3} and Lemma \ref{sl3*},
it suffices to show that
\begin{align*}
\lf[\sum_{k\in\zz}2^{kq}
\lf\|\chi_{\{x\in\rn:\ M(f)(x)>2^k\}}\r\|_{\lv}^q\r]^{\frac1q}
\ls\|f\|_{\vahlpq}.
\end{align*}
For any fixed $k_0\in\zz$, we write
$$f=\sum_{k=-\fz}^{k_0-1}\sum_{i\in\mathbb{N}}\lambda_i^ka_i^k
+\sum_{k=k_0}^{\fz}\sum_{i\in\mathbb{N}}\cdots
=:f_1+f_2.$$
Then, by Remark \ref{sr1}(i), we know that
\begin{align}\label{fe6}
&\lf\|\chi_{\{x\in\rn:\ M(f)(x)>2^{k_0}\}}\r\|_{\lv}\\
&\hs\ls\lf\|\chi_{\{x\in\rn:\ M(f_1)(x)>2^{k_0-1}\}}\r\|_{\lv}
+\lf\|\chi_{\{x\in E_{k_0}:\ M(f_2)(x)>2^{k_0-1}\}}\r\|_{\lv}\noz\\
&\hs\hs+\lf\|\chi_{\{x\in (E_{k_0})^\com:\ M(f_2)(x)>2^{k_0-1}\}}\r\|_{\lv}\noz\\
&\hs=:{\rm J}_1+{\rm J}_2+{\rm J}_3,\noz
\end{align}
where $E_{k_0}:=\bigcup_{k=k_0}^\fz\bigcup_{i\in\nn}A^\tau B_i^k$.

For ${\rm J}_1$, clearly, we have
\begin{align}\label{fe7}
{\rm J}_1&\ls\lf\|\chi_{\{x\in\rn:\ \sum_{k=-\fz}^{k_0-1}\sum_{i\in\mathbb{N}}
\lambda_i^kM(a_i^k)(x)\chi_{A^\tau B_i^k}(x)>2^{k_0-2}\}}\r\|_{\lv}\\
&\hs+\lf\|\chi_{\{x\in\rn:\ \sum_{k=-\fz}^{k_0-1}\sum_{i\in\mathbb{N}}
\lambda_i^kM(a_i^k)(x)\chi_{(A^\tau B_i^k)^\com}(x)>2^{k_0-2}\}}\r\|_{\lv}\noz\\
&=:{\rm J}_{1,1}+{\rm J}_{1,2}.\noz
\end{align}

To deal with ${\rm J}_{1,1}$,
by the H\"older inequality, we find that, for any given
$t\in(0,\min\{\underline{p},q\})$,
$\q1\in(1,\min\{\frac{r}{\max\{p_+,1\}},\frac{1}{t}\})$,
$\dz\in(0,1-\frac1{\q1})$ and any $x\in\rn$,
\begin{align*}
&\sum_{k=-\fz}^{k_0-1}\sum_{i\in\nn}\lik M(\aik)
(x)\chi_{A^\tau\Bik}(x)\\
&\hs\le \lf(\sum_{k=-\fz}^{k_0-1}2^{k\dz{\q1}'}\r)
^{1/{\q1}'}\lf\{\sum_{k=-\fz}^{k_0-1}2^{-k\dz{\q1}}
\lf[\sum_{i\in\nn}\lik M(\aik)(x)\chi_{A^\tau\Bik}(x)\r]
^{\q1}\r\}^{1/{\q1}}\\
&\hs=\frac{2^{k_0\dz}}{(2^{\dz{\q1}'}-1)^{1/{\q1}'}}
\lf\{\sum_{k=-\fz}^{k_0-1}2^{-k\dz{\q1}}
\lf[\sum_{i\in\nn}\lik M(\aik)(x)\chi_{A^\tau\Bik}(x)\r]
^{\q1}\r\}^{1/{\q1}},
\end{align*}
where ${\q1}'$ denotes the conjugate index of $\q1$, namely,
$\frac{1}{\q1}+\frac{1}{{\q1}'}=1$. By this, the facts that
$\q1 t<1$ and $M(f)(x)\ls\HL(f)(x)$
for any $x\in\rn$, Remark \ref{sr1}(i) and \cite[Theorem 2.61]{cf13},
we further conclude that
\begin{align*}
{\rm J}_{1,1}
&\ls\lf\|\chi_{\{x\in\rn:\ \frac{2^{k_0\dz}}
{(2^{\dz{\q1}'}-1)^{1/{\q1}'}}
\{\sum_{k=-\fz}^{k_0-1}2^{-k\dz{\q1}}
[\sum_{i\in\nn}\lik M(\aik)(x)\chi_{A^\tau\Bik}(x)]
^{\q1}\}^{1/{\q1}}>
2^{k_0-2}\}}\r\|_{\lv}\\
&\ls 2^{-k_0\q1(1-\dz)}\lf
\|\sum_{k=-\fz}^{k_0-1}2^{-k\dz{\q1}}\lf
[\sum_{i\in\nn}\lik M(\aik)\chi_{A^\tau\Bik}\r]
^{\q1}\r\|_{\lv}\\
&\ls 2^{-k_0\q1(1-\dz)}\lf\|\sum_{k=-\fz}^{k_0-1}
2^{(1-\dz)k{\q1}t}\sum_{i\in\nn}\lf
[\lf\|\chi_{\Bik}\r\|_{\lv}\HL(\aik)\chi_{A^\tau\Bik}\r]
^{\q1 t}\r\|
^{\frac{1}{t}}_{L^{\frac{p(\cdot)}
t}(\rn)}\\
&\ls 2^{-k_0\q1(1-\dz)}\lf[\sum_{k=-\fz}
^{k_0-1}2^{(1-\dz)k{\q1}t}\lf\|\lf\{\sum_{i\in\nn}
\lf[\lf\|\chi_{\Bik}\r\|_{\lv}\HL(\aik)\chi_{A^\tau\Bik}\r]^
{\q1t}\r\}^{\frac{1}{t}}\r\|^{t}_{\lv}\r]^{\frac{1}{t}}.
\end{align*}
On the other hand, since $r>1$, then,
from the boundedness of $\HL$ on $L^{r/\q1}(\rn)$, we deduce that,
for any $k\in\zz$ and $i\in\nn$,
\begin{align*}
\lf\|\lf[\lf\|\chi_{\Bik}\r\|_{\lv}
\HL(\aik)\chi_{A^\tau\Bik}\r]^{\q1}\r\|_{L^{r/\q1}(\rn)}
&\ls\lf\|\chi_{\Bik}\r\|_{\lv}^{\q1}
\lf\|\HL(\aik)\chi_{A^\tau\Bik}\r\|^{\q1}_{L^r(\rn)}
\ls\lf|\Bik\r|^{\frac{\q1}{r}},
\end{align*}
which, combined with
Lemma \ref{fl2}, Remark \ref{fr1}(i), the fact that $t<q$
and the H\"older inequality, implies that
\begin{align*}
{\rm J}_{1,1}
&\ls 2^{-k_0\q1(1-\dz)}\lf[\sum_{k=-\fz}
^{k_0-1}2^{(1-\dz)k{\q1}t}\lf\|\lf(\sum_{i\in\nn}
\chi_{A^\tau\Bik}\r)^{\frac{1}{t}}\r\|^{t}_{\lv}\r]^{\frac{1}{t}}\\
&\ls 2^{-k_0\q1(1-\dz)}\lf\{\sum_{k=-\fz}^{k_0-1}
2^{[(1-\dz){\q1}-\varepsilon]kt}2^{\varepsilon kt}\lf\|\lf(\sum_{i\in\nn}
\chi_{A^{j_0}\Bik}\r)^{\frac{1}{t}}\r\|^{t}_{\lv}\r\}^{\frac{1}{t}}\\
&\ls 2^{-k_0\q1(1-\dz)}\lf\{\sum_{k=-\fz}^{k_0-1}
2^{[(1-\dz){\q1}-\varepsilon]kt}2^{\varepsilon kt}\lf\|\sum_{i\in\nn}
\chi_{A^{j_0}\Bik}\r\|^{t}_{\lv}\r\}^{\frac{1}{t}}\\
&\ls 2^{-k_0\q1(1-\dz)}\lf\{\sum_{k=-\fz}^{k_0-1}
2^{[(1-\dz){\q1}-\varepsilon]kt\frac q{q-t}}\r\}^{\frac{q-t}{qt}}
\lf[\sum_{k=-\fz}^{k_0-1}2^{\varepsilon kq}\lf\|\sum_{i\in\nn}
\chi_{\Bik}\r\|^{q}_{\lv}\r]^{\frac{1}{q}}\\
&\sim 2^{-\varepsilon k_0}\lf[\sum_{k=-\fz}^{k_0-1}
2^{\varepsilon kq}\lf\|\sum_{i\in\nn}
\chi_{\Bik}\r\|^{q}_{\lv}\r]^{\frac{1}{q}},
\end{align*}
where $\varepsilon\in(1,(1-\dz)\q1)$. From this and
\eqref{fe5}, we deduce that
\begin{align}\label{fe8}
\lf[\sum_{k_0\in\zz}2^{k_0q}({\rm J}_{1,1})^q\r]^{\frac1q}
&\ls\lf[\sum_{k_0\in\zz}2^{k_0q}2^{-\varepsilon k_0q}
\sum_{k=-\fz}^{k_0-1}2^{\varepsilon kq}
\lf\|\sum_{i\in\nn}\chi_{\Bik}\r\|^{q}_{\lv}\r]^{\frac1q}\\
&\sim\lf[\sum_{k\in\zz}\sum_{k_0=k+1}^\fz
2^{(1-\varepsilon) k_0q}2^{\varepsilon kq}
\lf\|\sum_{i\in\nn}\chi_{\Bik}\r\|^{q}_{\lv}\r]^{\frac1q}\noz\\
&\sim\lf[\sum_{k\in\zz}2^{kq}
\lf\|\sum_{i\in\nn}\chi_{\Bik}\r\|^{q}_{\lv}\r]^{\frac1q}
\sim\|f\|_{\vahlpq}.\noz
\end{align}

To estimate ${\rm J}_{1,2}$, for any $i\in\nn$, $k\in\zz$
and $x\in (A^\tau\Bik)^\complement$, by an argument
similar to that used in the proof of \cite[(3.27)]{lyy16},
we have
\begin{align}\label{fe9}
M(\aik)(x)\ls\lf\|\chi_{\Bik}\r\|_{\lv}^{-1}\frac
{|\Bik|^\beta}{[\rho(x-\xik)]^\beta}
\ls\lf\|\chi_{\Bik}\r\|_{\lv}^{-1}\lf[\HL(\chi_{\Bik})(x)\r]^\bz,
\end{align}
where, for any $i\in\nn$, $k\in\zz$, $\xik$ denotes the centre
of the dilated ball $\Bik$ and
\begin{align}\label{fe27}
\beta:=\lf(\frac{\ln b}{\ln \lambda_-}+s+1\r)
\frac{\ln \lambda_-}{\ln b}>\frac1{\underline{p}}.
\end{align}
By this, Remark \ref{sr1}(i), Lemma \ref{fl1},
Remark \ref{fr1}(i) and the H\"older inequality,
we find that, for any $t\in(0,\min\{q,1/\bz\})$,
$q_1\in(\frac1{\bz t}, \frac1t)$ and $\delta\in(0,1-\frac1{q_1})$,
\begin{align*}
{\rm J}_{1,2}
&\ls \lf\|
\chi_{\{x\in\rn:\ \frac{2^{k_0\dz}}{(2^{\dz{q_1}'}-1)^{1/{q_1}'}}
\{\sum_{k=-\fz}^{k_0-1}2^{-k\dz{q_1}}
[\sum_{i\in\nn}\lik M(\aik)(x)
\chi_{({A^\tau\Bik})^\com}(x)]^{q_1}\}^
{1/{q_1}}>2^{k_0-2}\}}\r\|_{\lv}\\
&\ls 2^{-k_0q_1(1-\dz)}\lf
\|\sum_{k=-\fz}^{k_0-1}2^{-k\dz{q_1}}\lf
[\sum_{i\in\nn}\lik M(\aik)
\chi_{({A^\tau\Bik})^\com}\r]^{q_1}\r\|_{\lv}\\
&\ls 2^{-k_0q_1(1-\dz)}\lf\{
\sum_{k=-\fz}^{k_0-1}2^{(1-\dz)k{q_1}t}
\lf\|\sum_{i\in\nn}\lf[\HL\lf(\chi_{\Bik}\r)\r]^
{\bz tq_1}\r\|_{L^{\frac{p(\cdot)}
{t}}(\rn)}\r\}^{\frac{1}{t}}\\
&\ls 2^{-k_0q_1(1-\dz)}\lf
[\sum_{k=-\fz}^{k_0-1}2^{(1-\dz)k{q_1}t}
\lf\|\sum_{i\in\nn}\chi_{\Bik}\r\|_
{L^{\frac{p(\cdot)}{t}}(\rn)}\r]^{\frac{1}{t}}\\
&\ls 2^{-k_0q_1(1-\dz)}\lf\{\sum_{k=-\fz}
^{k_0-1}2^{[(1-\dz){q_1}-\varepsilon]kt}
2^{\varepsilon kt}\lf\|\sum_{i\in\nn}
\chi_{A_{j_0}\Bik}\r\|_
{\lv}^{t}\r\}^{\frac{1}{t}}\\
&\ls 2^{-k_0q_1(1-\dz)}\lf\{\sum_{k=-\fz}^{k_0-1}
2^{[(1-\dz){q_1}-\varepsilon]kt\frac q{q-t}}\r\}^{\frac{q-t}{qt}}
\lf[\sum_{k=-\fz}^{k_0-1}2^{\varepsilon kq}\lf\|\sum_{i\in\nn}
\chi_{\Bik}\r\|^{q}_{\lv}\r]^{\frac{1}{q}}\\
&\sim 2^{-\varepsilon k_0}\lf[\sum_{k=-\fz}^{k_0-1}
2^{\varepsilon kq}\lf\|\sum_{i\in\nn}
\chi_{\Bik}\r\|^{q}_{\lv}\r]^{\frac{1}{q}},
\end{align*}
where $\varepsilon\in(1,(1-\dz)q_1)$. This, together with
a calculation similar to that of \eqref{fe8}, further implies that
\begin{align}\label{fe10}
\lf[\sum_{k_0\in\zz}2^{k_0q}({\rm J}_{1,2})^q\r]^{\frac1q}
\ls\lf[\sum_{k\in\zz}2^{kq}
\lf\|\sum_{i\in\nn}\chi_{\Bik}\r\|^{q}_{\lv}\r]^{\frac1q}
\sim\|f\|_{\vahlpq}.
\end{align}
By this, \eqref{fe7} and \eqref{fe8}, we have
\begin{align}\label{fe12}
\lf[\sum_{k_0\in\zz}2^{k_0q}({\rm J}_1)^q\r]^{\frac1q}
\ls\lf[\sum_{k\in\zz}2^{kq}
\lf\|\sum_{i\in\nn}\chi_{\Bik}\r\|^{q}_{\lv}\r]^{\frac1q}
\sim\|f\|_{\vahlpq}.
\end{align}

For ${\rm J}_2$, by Remark \ref{fr1}(i), the H\"older inequality,
we know that,
for any $t\in(0,\min\{\underline{p},q\})$ and $\dz\in(0,1)$,
\begin{align*}
{\rm J}_2&\ls\|\chi_{E_{k_0}}\|_{L^{p(\cdot)}(\rn)}
\ls\lf\|\sum_{k={k_0}}^\fz\sum_{i\in\nn}
\chi_{A^\tau\Bik}\r\|_{L^{p(\cdot)}(\rn)}\\
&\ls\lf\|\sum_{k={k_0}}^\fz\sum_{i\in\nn}
\chi_{\Bik}\r\|_{L^{p(\cdot)}(\rn)}
\ls\lf[\sum_{k={k_0}}^\fz\lf\|\sum_{i\in\nn}\chi_{\Bik}
\r\|_{L^{p(\cdot)}(\rn)}^t\r]^{1/t}\\
&\ls\lf(\sum_{k={k_0}}^\fz2^{-k\dz t\frac q{q-t}}\r)^{\frac{q-t}{qt}}
\lf[\sum_{k={k_0}}^\fz2^{k\dz q}\lf\|\sum_{i\in\nn}\chi_{\Bik}
\r\|_{L^{p(\cdot)}(\rn)}^q\r]^{1/q}\sim2^{-k_0\dz}
\lf[\sum_{k={k_0}}^\fz2^{k\dz q}\lf\|\sum_{i\in\nn}\chi_{\Bik}
\r\|_{L^{p(\cdot)}(\rn)}^q\r]^{1/q}.
\end{align*}
By this, similarly to \eqref{fe8}, we obtain
\begin{align}\label{fe11}
\lf[\sum_{k_0\in\zz}2^{k_0q}({\rm J}_2)^q\r]^{\frac1q}
\ls\lf[\sum_{k\in\zz}2^{kq}
\lf\|\sum_{i\in\nn}\chi_{\Bik}\r\|^{q}_{\lv}\r]^{\frac1q}
\sim\|f\|_{\vahlpq}.
\end{align}

For ${\rm J}_3$, since $\bz>\frac1{\underline{p}}$, it follows that
there exist $0<t<\dz<1$ and $L\in(1,\fz)$ such that $\bz t>\frac1{\underline{p}}$
and $q\bz tL>1$. By this, the value of $\lik$, \eqref{fe9}, Lemma \ref{fl1}
and the H\"older inequality, we find that
\begin{align*}
{\rm J}_3&\ls\lf\|\chi_{\{x\in (E_{k_0})
^\complement:\ \sum_{k=k_0}^\fz\sum_{i\in\nn}
\lik M(\aik)(x)>2^{k_0-1}\}}\r\|_{\lv}\\
&\ls2^{-tk_0}\lf\|\sum_{k=k_0}^\fz
\sum_{i\in\nn}\lf[\lik M(\aik)\r]^t\chi_{(E_{k_0})^\com}\r\|_{\lv}
\ls2^{-tk_0}\lf[\sum_{k=k_0}^\fz
\lf\|\lf\{\sum_{i\in\nn}\lf[\lik M(\aik)\r]^t\chi_{(E_{k_0})^\com}\r\}^
{\frac1{\bz t}}\r\|_{L^{\bz tp(\cdot)}(\rn)}\r]^{\bz t}\\
&\ls2^{-tk_0}\lf[\sum_{k=k_0}^\fz
2^{k/\bz}\lf\|\lf\{\sum_{i\in\nn}\lf[\HL\lf(\chi_{\Bik}\r)\r]^{\bz t}
\r\}^{^{\frac1{\bz t}}}\r\|_{L^{\bz tp(\cdot)}(\rn)}\r]^{\bz t}\\
&\ls2^{-tk_0}\lf[\sum_{k=k_0}^\fz2^{k/\bz}\lf\|\sum_{i\in\nn}
\chi_{\Bik}\r\|_{\lv}^\frac1{\bz t}\r]^{\bz t}
\ls2^{-tk_0}\lf[\sum_{k=k_0}^\fz2^{\frac k{\bz L}(1-\frac \dz t)}
2^{\frac{k\dz}{\bz tL}}\lf\|\sum_{i\in\nn}
\chi_{\Bik}\r\|_{\lv}^\frac1{\bz tL}\r]^{\bz tL}\\
&\ls2^{-tk_0}\lf[\sum_{k=k_0}^\fz2^{\frac k{\bz L}(1-\frac \dz t)
\frac{q\bz tL}{q\bz tL-1}}\r]^{\frac{q\bz tL}q}
\lf[\sum_{k={k_0}}^\fz2^{k\dz q}\lf\|\sum_{i\in\nn}\chi_{\Bik}
\r\|_{L^{p(\cdot)}(\rn)}^q\r]^{1/q}\\
&\sim2^{-\dz k_0}
\lf[\sum_{k={k_0}}^\fz2^{k\dz q}\lf\|\sum_{i\in\nn}\chi_{\Bik}
\r\|_{L^{p(\cdot)}(\rn)}^q\r]^{1/q}.
\end{align*}
From this and an argument similar to that used in the proof of
\eqref{fe8}, we deduce that
 \begin{align}\label{fe28}
\lf[\sum_{k_0\in\zz}2^{k_0q}({\rm J}_3)^q\r]^{\frac1q}
\ls\lf[\sum_{k\in\zz}2^{kq}
\lf\|\sum_{i\in\nn}\chi_{\Bik}\r\|^{q}_{\lv}\r]^{\frac1q}
\sim\|f\|_{\vahlpq},
\end{align}
which, combined with \eqref{fe6}, \eqref{fe12} and \eqref{fe11},
implies that
\begin{align*}
\|f\|_{\vhlpq}&\sim\lf[\sum_{k_0\in\zz}2^{k_0q}
\lf\|\chi_{\{x\in\rn:\ M(f)(x)>2^{k_0}\}}\r\|_{\lv}^q\r]^{\frac1q}\\
&\ls\lf[\sum_{k_0\in\zz}2^{k_0q}({\rm J}_1)^q\r]^{\frac1q}+
\lf[\sum_{k_0\in\zz}2^{k_0q}({\rm J}_2)^q\r]^{\frac1q}+
\lf[\sum_{k_0\in\zz}2^{k_0q}({\rm J}_3)^q\r]^{\frac1q}\ls\|f\|_{\vahlpq}.
\end{align*}
This further implies that $f\in\vhlpq$ and hence finishes the proof of
\eqref{fe4}.

We now prove that $\vhlpq\st\vahlpq$. To this end, it suffices
to show that
\begin{align}\label{fe13}
\vhlpq\st H_A^{p(\cdot),\fz,s,q}(\rn),
\end{align}
due to the fact that each $(p(\cdot),\infty,s)$-atom is also a $(p(\cdot),r,s)$-atom
and hence $$H_A^{p(\cdot),\fz,s,q}(\rn)\st\vahlpq.$$

Now we prove \eqref{fe13} by three steps.

\emph{Step 1.}
To show \eqref{fe13}, for any
$f\in\vhlpq,$ $\phi\in\cs(\rn)$ with $\int_\rn\phi(x)\,dx=1$
and $\nu\in\mathbb{N}$,
let $f^{(\nu)}:=f\ast\phi_{-\nu}$. By this and \cite[p.\,15, Lemma 3.8]{mb03},
we know that $f^{(\nu)}\to f$ \ in $\cs'(\rn)$ as $\nu\to\infty$.
Moreover, by \cite[p.\,39, Lemma 6.6]{mb03}, we conclude that there exists
a positive constant $ C_{(N,\phi)}$, depending
on $N$ and $\phi$, but independent of $f$, such that,
for any $\nu\in\mathbb{N}$ and $x\in\rn$,
\begin{align*}
M_{N+2}\lf(f^{(\nu)}\r)(x)\le C_{(N,\phi)} M_N(f)(x).
\end{align*}
Thus, by Definition \ref{sd3} and Proposition \ref{sp1},
we find that $f^{(\nu)}\in\vhlpq$
and $$\lf\|f^{(\nu)}\r\|_{\vhlpq}\ls\|f\|_{\vhlpq}.$$

The aim of this step is to prove that, for any $\nu\in\nn$,
\begin{align}\label{fe15}
f^{(\nu)}=\sum_{k\in\zz}\sum_{i\in\nn}h_i^{\nu,k}
\hspace{0.6cm} {\rm in}\hspace{0.2cm}\cs'(\rn),
\end{align}
where, for each $\nu,\,i\in\nn$ and $k\in\zz$, $h_i^{\nu,k}$
is a $(p(\cdot),\fz,s)$-atom multiplied by a constant
depending on $k$ and $i$, but independent of $f$ and $\nu$.

To show \eqref{fe15}, we borrow some ideas from the
proofs of \cite[p.\,38, Theorem 6.4]{mb03} and \cite[Theorem 3.6]{lyy16}.
For any $k\in\mathbb{Z}$ and
$N\in\mathbb{N}\cap[\lfloor(\frac1{\underline{p}}-1)\frac{\ln b}{\ln
\lambda_-}\rfloor+2,\fz)$,
let
$$\Omega_k:=\lf\{x\in\rn:\ M_N(f)(x)>2^k\r\}.$$
Clearly, $\Omega_k$ is open. From this and Lemma \ref{fl3} with $m=6\tau$,
we deduce that there exist a sequence
$\{x_i^k\}_{i\in\mathbb{N}}\subset\Omega_k$ and
a sequence $\{\ell_i^k\}_{i\in\mathbb{N}}\st\zz$
such that
\begin{align}\label{fe16}
\Omega_k=\bigcup_{i\in\mathbb{N}}(x_i^k+B_{\ell_i^k});
\end{align}
\begin{align}\label{fe17}
(x_i^k+B_{\ell_i^k-\tau})\cap(x_j^k+B_{\ell_j^k-\tau})
=\emptyset\hspace{0.2cm} {\rm for\ any}\  i,\ j\in\nn\ {\rm with}\ i\neq j;
\end{align}
\begin{align*}
(x_i^k+B_{\ell_i^k+6\tau})\cap\Omega_k^\complement
=\emptyset,\hspace{0.2cm} (x_i^k+B_{\ell_i^k+6\tau+1})\cap
\Omega_k^\complement\neq\emptyset\ \ \ {\rm for\ any}\
i\in\mathbb{N};
\end{align*}
\begin{align*}
{\rm if}\ (x_i^k+B_{\ell_i^k+4\tau})\cap(x_j^k+B_{\ell_j^k+
4\tau})\neq\emptyset,\hspace{0.1cm} {\rm then}\ |\ell_i^k-\ell_j^
k|\le\tau;
\end{align*}
\begin{align}\label{fe18}
\sharp\lf\{j\in\mathbb{N}:\
(x_i^k+B_{\ell_i^k+4\tau})\cap(x_j^k+B_{\ell_j^k+
4\tau})\neq\emptyset\r\}\le R\hspace{0.2cm} {\rm for\ any}\  i\in\mathbb{N},
\end{align}
where $\tau$ and $R$ are same as in Lemma \ref{fl3}.

Fix $\eta\in\cs(\rn)$ satisfying $\supp\eta\subset B_\tau$,
$0\le\eta\le1$ and $\eta\equiv1$ on $B_0$. For
any $i\in\mathbb{N}$, $k\in\mathbb{Z}$ and $x\in\rn$,
let
$\eta_i^k(x):=\eta(A^{-\ell_i^k}(x-x_i^k))$
and
$$\theta_i^k(x):=
\frac {\eta_i^k(x)}{\Sigma_{j\in\nn}\eta_j^k(x)}.$$
Then $\theta_i^k\in\cs(\rn)$,
$\supp \theta_i^k\subset x_i^k+B_{\ell_i^k+\tau}$,
$0\le\theta_i^k\le1$, $\theta_i^k\equiv1$
on $x_i^k+B_{\ell_i^k-\tau}$ by \eqref{fe17}, and
$\sum_{i\in\mathbb{N}}\theta_i^k=\chi_{\Omega_k}$.
From this, it follows that $\{\theta_i^k\}_{i\in\mathbb{N}}$
forms a smooth partition of unity of $\Omega_k$.

For any $m\in\mathbb{Z}_+$, define $\mathcal{P}_m(\rn)$ to be
the linear space of all polynomials on $\rn$ with
degree not greater than $m$. Moreover, for each $i$ and
$P\in\mathcal{P}_m(\rn)$, let
\begin{align}\label{fe20}
\|P\|_{i,k}:=
\lf[\frac1{\int_\rn\theta_i^k(x)\,dx}\int_\rn
|P(x)|^2\theta_i^k(x)\,dx\r]^{1/2}
\end{align}
Then $(\mathcal{P}_m(\rn), \|\cdot\|_{i,k})$
is a finite dimensional Hilbert space. For each $i$, since
$f^{(\nu)}$ induces a linear functional on
$\mathcal{P}_{m}(\rn)$ via
$$Q\mapsto\frac1{\int_\rn\theta_i^k(x)\,dx}\lf\langle
f^{(\nu)},Q\theta_i^k\r\rangle,\hspace{0.2cm} Q\in\mathcal{P}_m(\rn),$$
by the Riesz lemma, it follows that there exists
a unique polynomial  $P_i^{\nu,k}\in\mathcal{P}_m(\rn)$
such that, for any $Q\in\mathcal{P}_m(\rn)$,
\begin{align*}
\frac1{\int_\rn\theta_i^k(x)\,dx}
\lf\langle f^{(\nu)},Q\theta_i^k\r\rangle
&=\frac1{\int_\rn\theta_i^k(x)\,dx}
\lf\langle P_i^{\nu,k},Q\theta_i^k\r\rangle\\
&=\frac 1{\int_\rn\theta_i^k(x)\,dx}
\int_\rn P_i^{\nu,k}(x)Q(x)\theta_i^k(x)\,dx.
\end{align*}
For each $i\in\mathbb{N}$ , $k\in\mathbb{Z}$ and
$\nu\in\mathbb{N}$, let
$b_i^{\nu,k}:=[f^{(\nu)}-P_i^{\nu,k}]\theta_i^k$
and
$$g^{\nu,k}:=f^{(\nu)}-\sum_{i\in\mathbb{N}}b_i^{\nu,k}
=f^{(\nu)}-\sum_{i\in\mathbb{N}}\lf[f^{(\nu)}-P_i^{\nu,k}\r]
\theta_i^k =f^{(\nu)}\chi_{\Omega_k^\complement}+
\sum_{i\in\mathbb{N}}P_i^{\nu,k}\theta_i^k.$$
Then, by an argument similar to that used in \cite[p.\,1679]{lyy16},
we conclude that, for any $\nu\in\nn$,
$\|g^{\nu,k}\|_{L^{\infty}(\rn)}\ls2^k$ and
$\|g^{\nu,k}\|_{L^{\infty}(\rn)}\to0$
as $k\to-\infty$.

For any $k\in\mathbb{Z}$, let
$$\widetilde{\Omega}_k:=\lf\{x\in\rn:\ M_N(f^{(\nu)})(x)>2^k\r\}.$$
Then, for any $\epsilon\in(0,1)$, by $f^{(\nu)}\in\vhlpq$,
we know that there exists an integer $k_{(\epsilon)}$ such that,
for any $k\in[k_{(\epsilon)},\fz)\cap\zz$, $|\wz{\Omega}_k|<\epsilon$.
Since, for any $k\in[k_{(\epsilon)},\fz)\cap\zz$ and $\alpha\in(0,\fz)$,
\begin{align*}
\lf|\widetilde{\Omega}_k\cap\lf\{x\in\rn:\ M_N(f^{(\nu)})(x)>\alpha\r\}\r|
&\le\min\lf\{\lf|\widetilde{\Omega}_k\r|,\ \alpha^{-p_-}
\lf\|M_N(f^{(\nu)})\r\|_{L^{p(\cdot),\infty}(\widetilde{\Omega}_k)}^{p_-}\r\}\\
&\le\min\lf\{\lf|\widetilde{\Omega}_k\r|,\ \alpha^{-p_-}
\lf\|M_N(f^{(\nu)})\r\|_{L^{p(\cdot),\infty}(\rn)}^{p_-}\r\},
\end{align*}
it follows that, for any $p_0\in(0,p_-)$ satisfying
$\lfloor(1/p_0-1)\ln b/\ln\lambda_-\rfloor\le s$,
\begin{align}\label{fe19}
&\int_{\widetilde{\Omega}_k}\lf[M_N(f^{(\nu)})(x)\r]^{p_0}\,dx\\
&\hs=\int_0^\infty p_0\alpha^{p_0-1}\lf|\lf\{x\in \widetilde{\Omega}_k:\
M_N(f^{(\nu)})(x)>\alpha\r\}\r|\,d\alpha\noz\\
&\hs\le\int_0^\gamma p_0\lf|\widetilde{\Omega}_k\r|
\alpha^{p_0-1}\,d\alpha+\int_\gamma^\infty p_0\alpha^
{p_0-1-p_-}\lf\|M_N(f^{(\nu)})\r\|_{L^{p(\cdot),\infty}(\rn)}^{p_-}\,d\alpha\noz\\
&\hs=\frac {p_-}{p_--p_0}\lf|\widetilde{\Omega}_k\r|^{1-\frac{p_0}{p_-}}
\lf\|M_N(f^{(\nu)})\r\|_{L^{p(\cdot),\infty}(\rn)}^{p_0}
\ls\lf|\widetilde{\Omega}_k\r|^{1-\frac{p_0}{p_-}}
\lf\|M_N(f^{(\nu)})\r\|_{L^{p(\cdot),q}(\rn)}^{p_0},\noz
\end{align}
where
$\gamma:={\|M_N(f^{(\nu)})\|_{L^{p(\cdot),\infty}
(\rn)}}{|\widetilde{\Omega}_k|^{-\frac1{p_-}}}$.
On the other hand, by \cite[(3.9)]{lyy16}, we know that, for any
$k\in\mathbb{Z}$ and $p_0$ as in \eqref{fe19},
\begin{align*}
\int_\rn \lf[M_N\lf(\sum_{i\in\mathbb{N}}
b_i^{\nu,k}\r)(x)\r]^{p_0}\,dx
\ls\int_{\widetilde{\Omega}_k}
\lf[M_N(f^{(\nu)})(x)\r]^{p_0}\,dx,
\end{align*}
which, together with \eqref{fe19}, further implies that
\begin{align*}
\lf\|\sum_{i\in\mathbb{N}}b_i^{\nu,k}\r\|_{H^{p_0}_A(\rn)}
:=&\lf\|M_N\lf(\sum_{i\in\mathbb{N}}b_i^{\nu,k}\r)\r\|_
{L^{p_0}(\rn)}\ls\lf|\widetilde{\Omega}_k\r|^{\frac1{p_0}-\frac1{p_-}}
\lf\|M_N(f^{(\nu)})\r\|_{L^{p(\cdot),q}(\rn)}\to0
\end{align*}
as $k\to\infty$.
Here and hereafter, for any $p\in(0,\fz)$,
$H^p_A(\rn)$ denotes the anisotropic Hardy space
introduced by Bownik in \cite{mb03}. By this and the fact that,
for any $\nu\in\nn$,
$\|g^{\nu,k}\|_{L^{\infty}(\rn)}\to0$
as $k\to-\infty$, we have
\begin{align*}
&\lf\|f^{(\nu)}-\sum_{k=-N}^N\lf(g^{\nu,k+1}-g^{\nu,k}\r)\r\|_
{H^{p_0}_A(\rn)+L^\infty(\rn)}\\
&\hs\ls\lf\|\sum_{i\in\nn}
b_i^{\nu,N+1}\r\|_{H^{p_0}_A(\rn)}+\lf\|g^{\nu,-N}\r\|_
{L^\infty(\rn)}\to0\hspace{0.2cm} {\rm as}
\hspace{0.2cm}N\to\infty,
\end{align*}
where, for any $f\in H^{p_0}_A(\rn)+L^\infty(\rn)$,
\begin{align*}
\|f\|_{H^{p_0}_A(\rn)+L^\infty(\rn)}:=\inf
\Big\{\|&f_1\|_{H^{p_0}_A(\rn)}+\|f_2\|_{L^\infty(\rn)}:\ \\
&\lf.f=f_1+f_2,\ f_1\in H^{p_0}_A(\rn),\ f_2\in L^\infty(\rn)\r\}
\end{align*}
with the infimum being taken over all decompositions of $f$
as above. From this and a proof similar to the proofs of
\cite[(3.12), (3.13), (3.14) and (3.16)]{lyy16}, we deduce that,
for any $\nu\in\nn$,
\begin{align*}
f^{(\nu)}&=\sum_{k\in\zz}\lf(g^
{\nu,k+1}-g^{\nu,k}\r)\\
&=\sum_{k\in\zz}\sum_{i\in\nn}\lf[b_i^{\nu,k}-\sum_{j\in\nn}
\lf(b_j^{\nu,k+1}\theta_i^k-P_{i,j}^{\nu,k+1}\theta_j^
{k+1}\r)\r]=:\sum_{k\in\zz}\sum_{i\in\zz}h_i^{\nu,k}
\hspace{0.3cm} {\rm in}\hspace{0.2cm} \cs'(\rn),
\end{align*}
where, for any $\nu$, $i$, $j\in\nn$ and $k\in\zz$,
$P_{i,j}^{\nu,k+1}$ is the orthogonal projection of
$[f^{(\nu)}-P_j^{\nu,k+1}]\theta_i^k$ on $\mathcal{P}_{m}(\rn)$
with respect to the norm defined by \eqref{fe20} and $h_i^{\nu,k}$
is a multiple of a $(p(\cdot),\fz,s)$-atom satisfying
\begin{align}\label{fe21}
\int_\rn h_i^{\nu,k}(x)Q(x)\,dx=0\hspace{0.4cm}
{\rm for\ any}\ Q\in\mathcal{P}_{m}(\rn),
\end{align}
\begin{align}\label{fe22}
\supp h_i^{\nu,k}\subset (x_i^k+B_{\ell_i^k+4\tau})
\end{align}
and
\begin{align}\label{fe23}
\lf\|h_i^{\nu,k}\r\|_{L^{\infty}(\rn)}\ls2^k.
\end{align}
This finishes the proof of \eqref{fe15}.

\emph{Step 2.} From \eqref{fe23} and the Alaoglu theorem (see, for example,
\cite[Theorem 3.17]{wr91}), it follows that there exists a subsequence
$\{\nu_\iota\}_{\iota=1}^{\fz}\st\nn$ such that, for each $i\in\nn$ and $k\in\zz$,
$h_i^{\nu_\iota,k}\to h_i^k$ as $\iota\to\fz$ in the weak-$\ast$ topology of
$L^{\fz}(\rn)$. Moreover,
$\supp h_i^k\subset (x_i^k+B_{\ell_i^k+4\tau})$,
$\|h_i^k\|_{L^{\infty}(\rn)}\ls2^k$ and
$\int_\rn h_i^k(x)Q(x)dx=0$ for any $Q\in\mathcal{P}_{m}(\rn)$.
Therefore, $h_i^k$ is a multiple of a $(p(\cdot),\infty,s)$-atom $a_i^k$.
Let $h_i^k:=\lambda_i^ka_i^k$, where
$\lambda_i^k\sim 2^k\|\chi_{\xik+B_{\ell_i^k+4\tau}}\|_{\lv}$. Then,
by \eqref{fe16}, \eqref{fe18} and Lemma \ref{sl3*}, we have
\begin{align*}
&\lf[\sum_{k\in\zz}\lf\|\lf\{\sum_{i\in\nn}
\lf[\frac{\lz_i^k\chi_{\xik+B_{\ell_i^k+4\tau}}}
{\|\chi_{\xik+B_{\ell_i^k+4\tau}}\|_{\lv}}\r]^
{\underline{p}}\r\}^{1/\underline{p}}\r\|_{\lv}^q\r]^{\frac1q}\\
&\hs\sim\lf[\sum_{k\in\zz}2^{kq}\lf\|\lf(\sum_{i\in\nn}
\chi_{\xik+B_{\ell_i^k+4\tau}}\r)^{\frac 1{p_-}}\r\|_{\lv}^q\r]^{\frac1q}\\
&\hs\sim\lf(\sum_{k\in\zz}2^{kq}\lf\|\chi_{\Omega_k}\r\|_{\lv}^q\r)^{\frac1q}
\sim\|M_N(f)\|_{L^{p(\cdot),q}(\rn)}\sim\|f\|_{\vhlpq}
\end{align*}
with the usual modification made when $q=\fz$.

\emph{Step 3.} By Step 2, we conclude that, to
prove \eqref{fe13}, it suffices to show that
$f=\sum_{k\in\zz}\sum_{i\in\nn}h_i^k$ in $\cs'(\rn)$.

To this end, for any $k\in\zz$, let
$$f_k:=\sum_{i\in\nn}h_i^k$$
and, for any $\nu\in\nn$ and $k\in\zz$,
$f^{(\nu)}_k:=g^{\nu,k+1}-g^{\nu,k}$. Then
$f_k^{(\nu_\iota)}\to f_k$ in $\cs'(\rn)$ as
$\iota\to\fz$. Indeed, by \eqref{fe18} and the support
conditions of $h_i^k$ and $h_i^{\nu,k}$, we find that,
for any $\varphi\in\cs(\rn)$,
\begin{align*}
\lf\langle f_k^{(\nu_\iota)},\varphi\r\rangle
&=\lf\langle \sum_{i\in\mathbb{N}}h_i^
{\nu_\iota,k},\varphi\r\rangle=\sum_{i\in\nn}
\lf\langle h_i^{\nu_\iota,k},\varphi\r\rangle\\
&\to\sum_{i\in\mathbb{N}}
\lf\langle h_i^k,\varphi\r\rangle=\lf\langle \sum_
{i\in\nn}h_i^k,\varphi\r\rangle=\langle f_k,
\varphi\rangle\hspace{0.2cm}{\rm as}\ \iota\to\fz.
\end{align*}

Next we aim to prove that, for any $\nu\in\nn$,
\begin{align}\label{fe26}
\sum_{|k|\geq K_1}f^{(\nu)}_k\to0\hspace{0.25cm}
{\rm in}\hspace{0.2cm} \cs'(\rn)\hspace{0.2cm} {\rm as}\ K_1\to\fz.
\end{align}
Indeed, for any $p_0\in(0,p_-)$ with
$\lfloor(1/p_0-1)\ln b/\ln\lambda_-\rfloor\le s$, by \eqref{fe21}, \eqref{fe22}
and \eqref{fe23}, we know that
$(2^k|B_{\ell_i^k+4\tau}|^{1/p_0})^{-1}h_i^{\nu,k}$
is a $(p_0,\fz,s)$-atom multiplied by a constant.
From this, \cite[p.\,19, Theorem 4.2]{mb03}, \eqref{fe18},
\eqref{fe16} and Lemma \ref{sl3*}, we deduce that,
as $K_2\to\fz$,
\begin{align}\label{fe24}
\lf\|\sum_{k\geq K_2}f^{(\nu)}_k\r\|_{H^{p_0}_A(\rn)}^{p_0}&\le\sum_{k\geq K_2}\sum_{i\in\nn}
\lf\|h_i^{\nu,k}\r\|_{H^{p_0}_A(\rn)}^{p_0}
\ls\sum_{k\geq K_2}\sum_{i\in\nn}2^{kp_0}
\lf|B_{\ell_i^k+4\tau}\r|\\
&\ls\sum_{k\geq K_2}2^{kp_0}
\lf[\lf\|\chi_{\Omega_k}\r\|_{\lv}^{p_-}
+\lf\|\chi_{\Omega_k}\r\|_{\lv}^{p_+}\r]\noz\\
&\ls\sum_{k\geq K_2}\lf[2^{k(p_0-p_-)}
\|f\|_{H^{p(\cdot),\fz}_A(\rn)}^{p_-}+2^{k(p_0-p_+)}
\|f\|_{H^{p(\cdot),\fz}_A(\rn)}^{p_+}\r]\noz\\
&\ls\sum_{k\geq K_2}\lf[2^{k(p_0-p_-)}
\|f\|_{\vhlpq}^{p_-}+2^{k(p_0-p_+)}\|f\|_{\vhlpq}^{p_+}\r]
\to0.\noz
\end{align}
Let
$E_1:=\{x\in\rn:\ p(x)\in(0,1)\}$, $E_2:=\{x\in\rn:\ p(x)\in[1,\fz)\}$
and, for any $j\in\{1,2\}$,
\begin{align*}
p_-(E_j):=\mathop\mathrm{ess\,inf}_{x\in E_j}p(x)\hspace{0.35cm}
{\rm and}\hspace{0.35cm}
p_+(E_j):=\mathop\mathrm{ess\,sup}_{x\in E_j}p(x).
\end{align*}
Then, by \eqref{fe21}, \eqref{fe22}
and \eqref{fe23} again, we find that
$(2^k|B_{\ell_i^k+4\tau}|)^{-1}h_i^{\nu,k}$ is a
$(1,\fz,s)$-atom multiplied by a constant. Therefore,
by \cite[p.\,19, Theorem 4.2]{mb03}, \eqref{fe18},
\eqref{fe16} and Lemma \ref{sl3*} again,
we conclude that
\begin{align}\label{fe25}
\lf\|\sum_{k\le K_3}f^{(\nu)}_k\r\|_{H^1_A(E_1)}&\le\sum_{k\le K_3}
\lf\|\sum_{i\in\nn}h_i^{\nu,k}\chi_{E_1}\r\|_{H^1_A(\rn)}
\ls\sum_{k\le K_3}2^k\lf|\Omega_k\cap E_1\r|\\
&\ls\sum_{k\le K_3}2^k\lf[\lf\|\chi_{\Omega_k}\r\|_{\lv}^{p_-(E_1)}
+\lf\|\chi_{\Omega_k}\r\|_{\lv}^{p_+(E_1)}\r]\noz\\
&\ls\sum_{k\le K_3}\lf[2^{k(1-p_-(E_1))}
\|f\|_{H^{p(\cdot),\fz}_A(\rn)}^{p_-(E_1)}+2^{k(1-p_+(E_1))}
\|f\|_{H^{p(\cdot),\fz}_A(\rn)}^{p_+(E_1)}\r]\noz\\
&\ls\sum_{k\le K_3}\lf[2^{k(1-p_-(E_1))}
\|f\|_{\vhlpq}^{p_-(E_1)}+2^{k(1-p_+(E_1))}\|f\|_{\vhlpq}^{p_+(E_1)}\r]
\to0\noz
\end{align}
as $K_3\to-\fz$. Similarly, for any $\wz{p}_+\in(p_+(E_2),\fz)$,
we have
\begin{align*}
\lf\|\sum_{k\le K_4}f^{(\nu)}_k\r\|_{L^{\wz{p}_+}(E_2)}
&\le\sum_{k\le K_4}
\lf\|\sum_{i\in\nn}h_i^{\nu,k}\chi_{E_2}\r\|_{L^{\wz{p}_+}(\rn)}
\ls\sum_{k\le K_4}
2^k\lf|\Omega_k\cap E_2\r|^{\frac1{\wz{p}_+}}\\
&\ls\sum_{k\le K_4}2^k\lf[\lf\|\chi_{\Omega_k}\r\|_{\lv}
^{\frac{p_-(E_2)}{\wz{p}_+}}
+\lf\|\chi_{\Omega_k}\r\|_{\lv}
^{\frac{p_+(E_2)}{\wz{p}_+}}\r]\\
&\ls\sum_{k\le K_4}\lf[2^{k(1-\frac{p_-(E_2)}{\wz{p}_+})}
\|f\|_{H^{p(\cdot),\fz}_A(\rn)}^{\frac{p_-(E_2)}{\wz{p}_+}}
+2^{k(1-\frac{p_+(E_2)}{\wz{p}_+})}
\|f\|_{H^{p(\cdot),\fz}_A(\rn)}^{\frac{p_+(E_2)}{\wz{p}_+}}\r]\\
&\ls\sum_{k\le K_4}\lf[2^{k(1-\frac{p_-(E_2)}{\wz{p}_+})}
\|f\|_{\vhlpq}^{\frac{p_-(E_2)}{\wz{p}_+}}
+2^{k(1-\frac{p_+(E_2)}{\wz{p}_+})}
\|f\|_{\vhlpq}^{\frac{p_+(E_2)}{\wz{p}_+}}\r]
\to0\noz
\end{align*}
as $K_4\to-\fz$. This, combined with \eqref{fe24} and
\eqref{fe25}, implies that \eqref{fe26} holds true.

By an argument
similar to that used in the proof of \eqref{fe26}, we know that
$\sum_{|k|\geq K_1}f_k\to0$ in $\cs'(\rn)$
as $K_1\to\fz$. From this, \eqref{fe26} and
a proof similar to that used in \cite[pp.\,1682-1683]{lyy16},
we further deduce that
$$f=\sum_{k\in\mathbb{Z}}f_k=\sum_{k\in\zz}\sum_{i\in\nn}h_i^k\quad
\mathrm{in}\quad \cs'(\rn),$$
which completes the proof of \eqref{fe13}. This shows $\vhlpq\subset \vahlpq$
and hence finishes the proof of Theorem \ref{ft1}.
\end{proof}

\section{Lusin area function characterizations of $\vhlpq$\label{s5}}
\hskip\parindent
In this section, using the atomic
characterization of $\vhlpq$ obtained in Theorem \ref{ft1},
we establish the Lusin area
function characterization of $\vhlpq$ in Theorem
\ref{fivet1}.
To this end, we first recall the notion of the anisotropic
Lusin area function (see \cite{lfy14,lyy16}).

\begin{definition}\label{fived1}
Let $\psi\in\cs(\rn)$ satisfy
$\int_{\rn}\psi(x)x^\gamma\,dx=0$
for any multi-index $\gamma\in(\zz_+)^n$ with
$|\gamma|\leq s$, where
$s\in\nn\cap[\lfloor(\frac1{p_-}-1)\ln b/\ln\lambda_-\rfloor,\fz)$ and
$p_-$ is as in \eqref{se3}. Then, for any $f\in\cs'(\rn)$,
the \emph{anisotropic Lusin area function} $S(f)$
is defined by setting, for any $x\in\rn$,

\begin{align*}
S(f)(x):=\lf[\sum_{k\in\mathbb{Z}}b^{-k}\int_{x+B_k}
\lf|f\ast\psi_{k}(y)\r|^2\,dy\r]^{1/2},
\end{align*}
\end{definition}

Recall also that a distribution $f\in\cs'(\rn)$ is said to
\emph{vanish weakly at infinity} if, for each $\psi\in\cs(\rn)$,
$f\ast\psi_{k}\to0$ in $\cs'(\rn)$ as $k\to\fz$.
Denote by $\cs'_0(\rn)$ the \emph{set of all $f\in\cs'(\rn)$
vanishing weakly at infinity}.

The main result of this section is the following Theorem \ref{fivet1}.
\begin{theorem}\label{fivet1}
Let $p(\cdot)\in C^{\log}(\rn)$ and $q\in(0,\fz]$.
Then $f\in\vhlpq$ if and only if
$f\in\cs'_0(\rn)$ and $S(f)\in\vlpq$.
Moreover, there exists a positive constant $C$ such that,
for any $f\in\vhlpq$,
$$C^{-1}\|S(f)\|_{\vlpq}\le\|f\|_{\vhlpq}\le C\|S(f)\|_{\vlpq}.$$
\end{theorem}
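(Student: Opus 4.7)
The plan is to prove the two implications separately, relying heavily on the atomic characterization established in Theorem \ref{ft1}. For the necessity direction, suppose $f\in\vhlpq$. That $f\in\cs'_0(\rn)$ follows from a standard argument: apply the atomic decomposition of $f$ from Theorem \ref{ft1}, observe that each individual atomic piece tends to zero in $\cs'(\rn)$ after convolution with $\psi_k$ as $k\to\fz$, and exchange the limit with the sum via a uniform bound (similar to \eqref{fe24}).

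To bound $\|S(f)\|_{\vlpq}$, I would mimic the strategy used in the proof of $\vahlpq\st\vhlpq$ in Theorem \ref{ft1}. Writing $f=\sum_{k,i}\lz_i^k\aik$ with $\{\aik\}$ a family of $(p(\cdot),r,s)$-atoms, fix a threshold level $k_0$, split $f=f_1+f_2$ at level $k_0$, and decompose $\{x\in\rn:\ S(f)(x)>2^{k_0}\}$ into three pieces parallel to ${\rm J}_1,{\rm J}_2,{\rm J}_3$: the part in $E_{k_0}^\complement$ where $S(f_1)$ dominates, the part $E_{k_0}$ itself, and the part in $E_{k_0}^\complement$ where $S(f_2)$ dominates. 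The key pointwise estimate, replacing \eqref{fe9}, is that for a $(p(\cdot),r,s)$-atom $a$ supported on a dilated ball $B$ one has $S(a)(x)\ls \|\chi_{B}\|_{\vh}^{-1}[\HL(\chi_B)(x)]^\bz$ for $x\in(A^\tau B)^\complement$ with $\beta$ as in \eqref{fe27}, which follows from the Taylor expansion of $\psi_k$ about the centre of $B$ together with the vanishing moment condition on $a$ up to order $s$. With this in hand, the same combination of H\"older's inequality, the Fefferman-Stein vector-valued inequality (Lemma \ref{fl1}), Lemma \ref{fl2}, and the doubling-type Remark \ref{fr1} produces the bound $\|S(f)\|_{\vlpq}\ls\|f\|_{\vahlpq}\sim\|f\|_{\vhlpq}$.

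For the sufficiency direction, assume $f\in\cs'_0(\rn)$ with $S(f)\in\vlpq$. The plan is to construct an atomic decomposition directly: apply the anisotropic Calder\'on reproducing formula to express $f$ as a superposition of building blocks localized in both space and scale, then perform a Whitney decomposition via Lemma \ref{fl3} on the level sets $\Omega_k:=\{x\in\rn:\ S(f)(x)>2^k\}$ (which have finite measure because $S(f)\in\vlpq$), obtaining dilated balls $x_i^k+B_{\ell_i^k}$ with the usual bounded-overlap properties \eqref{fe17}--\eqref{fe18}. Grouping together those pieces of the Calder\'on representation whose space-scale index falls inside the Whitney region $\wz\Omega_k\setminus\wz\Omega_{k+1}$ attached to $(x_i^k,\ell_i^k)$ yields candidate atoms $\aik$ supported on an appropriate dilate of the Whitney ball, together with coefficients $\lik\sim 2^k\|\chi_{\xik+B_{\ell_i^k+\mathrm{const}\cdot\tau}}\|_{\vh}$. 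From the level-set identity for $\|\cdot\|_{\vlpq}$ in Lemma \ref{sl3*} and Remark \ref{sr1}, the coefficient quasi-norm is comparable to $\|S(f)\|_{\vlpq}$, so applying Theorem \ref{ft1} in the direction $\vahlpq\st\vhlpq$ will close the argument.

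The main obstacle is verifying the atomic size estimate $\|\aik\|_{L^r(\rn)}\le|B_i^k|^{1/r}/\|\chi_{B_i^k}\|_{\vh}$. In the classical constant-exponent setting one dualizes against $L^{r'}$ and reduces the estimate to a tent-space norm, but this dual characterization is not available for $L^{p(\cdot)}$. Instead, following the strategy used in \cite{lyy16LP} that goes back to Fefferman \cite{f88}, I would derive a subtle pointwise/integral hybrid estimate (this is the estimate \eqref{five11}) that bounds the $L^r$ mass of $\aik$ directly by a localized portion of $S(f)$ restricted to $\wz\Omega_k\setminus\wz\Omega_{k+1}$, without passing through duality. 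Carrying out this estimate requires a careful interplay between the decay properties of $\psi$, the moment cancellation of the Calder\'on formula, the geometry of the Whitney decomposition, and the exponent $\bz$ in \eqref{fe27}; once \eqref{five11} is in place, the remainder of the argument reduces to bookkeeping via Lemmas \ref{sl3*}, \ref{fl1}, and \ref{fl2} exactly as in the proof of Theorem \ref{ft1}.
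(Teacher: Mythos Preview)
Your necessity direction matches the paper: vanishing at infinity is Lemma \ref{fivel1}, and the bound $\|S(f)\|_{\vlpq}\ls\|f\|_{\vhlpq}$ is obtained by rerunning the ${\rm J}_1,{\rm J}_2,{\rm J}_3$ scheme of Theorem \ref{ft1}, with the off-support estimate $S(a)(x)\ls\|\chi_Q\|_{\lv}^{-1}[\HL(\chi_Q)(x)]^\bz$ replacing \eqref{fe9}. (The paper works with dyadic cubes from Lemma \ref{fivel2} via Remark \ref{fiver1} rather than dilated balls, but this is cosmetic.)

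For the sufficiency you take a genuinely different construction. The paper does \emph{not} Whitney-decompose $\Omega_k$ via Lemma \ref{fl3}; instead it uses the dyadic grid of Lemma \ref{fivel2} together with a stopping-time density selection: $Q\in\mathcal{Q}_k$ iff $|Q\cap\Omega_k|>|Q|/2$ and $|Q\cap\Omega_{k+1}|\le|Q|/2$. Every $(y,t)$ in the Calder\'on formula sits in a unique tile $\widehat{Q}$, and $Q$ belongs to a unique $\mathcal{Q}_k$; grouping by maximal cubes $Q_i^k$ produces the $h_i^k$ directly. The size estimate \eqref{five11} then comes for free because on at least half of each contributing cube one has $S(f)\le 2^{k+1}$. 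Your tent-space/Whitney scheme can be made to work, but you still owe an explanation of how every space-scale pair $(y,t)$ is assigned to a Whitney ball of some $\Omega_k$ (Whitney balls have radius comparable to the distance to $\partial\Omega_k$, while the reproducing formula runs over \emph{all} scales $t$), and the half-measure condition that drives \eqref{five11} has to be reinserted by hand. What the paper's dyadic stopping-time construction buys is that both issues are handled at once; the price is that the maximal cubes $Q_i^k$ are not contained in $\Omega_k$, which is why the paper invokes the auxiliary ratio estimate Lemma \ref{fivel4} in the final step --- a lemma you do not mention but would not need in a pure Whitney approach, since there the cubes already sit inside $\Omega_k$.
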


To prove Theorem \ref{fivet1},
we need several technical lemmas. First, by an argument similar
to that used in the proof of \cite[Lemma 6.5]{yyyz16}, it is easy
to see that the following lemma holds true, the details being omitted.

\begin{lemma}\label{fivel1}
Let $p(\cdot)\in C^{\log}(\rn)$ and $q\in(0,\fz]$.
Then
$\vhlpq\subset\cs'_0(\rn)$.
\end{lemma}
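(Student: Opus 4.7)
The plan is to show $\vhlpq \subset \cs'_0(\rn)$ by adapting the argument of \cite[Lemma 6.5]{yyyz16} to the anisotropic Lorentz setting. Fix $f \in \vhlpq$ and $\psi, \varphi \in \cs(\rn)$; the goal is $\langle f \ast \psi_k, \varphi\rangle \to 0$ as $k \to \infty$. Choosing $c > 0$ with $\psi/c \in \cs_N(\rn)$ gives the $k$-uniform pointwise envelope $|(f \ast \psi_k)(x)| \le c\, M_N^0(f)(x)$ for every $k \in \zz$ and $x \in \rn$, with $M_N^0(f) \in \vlpq$ by hypothesis.

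Next I would verify that this envelope is $L^1$-integrable against $\varphi$. Using the Schwartz decay $|\varphi(x)| \le C_M(1+\rho(x))^{-M}$ (any $M > 0$) and the dyadic-shell decomposition $\rn = \bigcup_{\ell \in \zz}(B_{\ell+1}\setminus B_\ell)$, I would estimate each shell-integral of $M_N^0(f)$ via the layer-cake formula together with the level-set bound $\|\chi_{\{M_N^0(f) > 2^j\}}\|_{\lv} \ls 2^{-j}\|f\|_{\vhlpq}$ extracted from $\|M_N^0(f)\|_{\vlpq} < \infty$ through Lemma \ref{sl3*}. Choosing $M$ large relative to $p_\pm$ makes the resulting double sum in $(\ell, j)$ converge, producing $M_N^0(f)\,|\varphi| \in L^1(\rn)$.

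The core step is pointwise a.e. convergence $(f \ast \psi_k)(x) \to 0$ as $k \to \infty$. I would invoke the atomic decomposition from Theorem \ref{ft1}, $f = \sum_{m, i} \lambda_i^m a_i^m$ in $\cs'(\rn)$. For a single $(p(\cdot), r, s)$-atom $a$ supported on a dilated ball centered at $x_0$, the vanishing moments combined with a Taylor expansion of $\psi_k(x - \cdot)$ around $x_0$---exploiting the anisotropic derivative bound $|\partial^\beta \psi_k| \ls b^{-k}\lambda_-^{-k|\beta|}$---yield $|(a \ast \psi_k)(x)| \to 0$ as $k \to \infty$ for each fixed atom and $x$. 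Commuting this limit with the atomic sum at every $x$ where $M_N^0(f)(x) < \infty$ (i.e.\ a.e.) gives $(f \ast \psi_k)(x) \to 0$ pointwise a.e. Then Lebesgue dominated convergence with dominating function $c\,M_N^0(f)\,|\varphi| \in L^1(\rn)$ completes the proof.

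The main obstacle will be the commutation of the limit $k \to \infty$ with the doubly-indexed atomic sum, since the natural per-atom decay estimates need not be summable in $(m, i)$ after pairing with $\lambda_i^m$. A workaround is a truncation argument: write $f = f_{|m| \le k_0} + f_{|m| > k_0}$, where the first piece is a finite-level atomic sum (so its convolution with $\psi_k$ vanishes term by term as $k \to \infty$), while the tail has small envelope $\|M_N^0(f_{|m| > k_0})\,|\varphi|\|_{L^1(\rn)}$ controllable via the monotone convergence property of $\vlpq$ in Proposition \ref{sl6}; sending $k \to \infty$ before $k_0 \to \infty$ then concludes.
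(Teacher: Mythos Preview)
Your dominated-convergence strategy has a genuine gap when $p_- \le 1$, which is precisely the regime where Hardy spaces are most interesting. The claim that $M_N^0(f)\,|\varphi| \in L^1(\rn)$ need not hold: from the weak-type bound $\|\chi_{\{M_N^0(f)>\lz\}}\|_{\lv}\ls \lz^{-1}$ one only extracts $|\{M_N^0(f)>\lz\}|\ls \lz^{-p_-}$ for large $\lz$, and the layer-cake integral $\int_{\lz_0}^\infty \lz^{-p_-}\,d\lz$ diverges whenever $p_-\le 1$. Concretely, functions in $L^{p(\cdot),q}(\rn)$ with $p_-<1$ are not locally integrable in general (think of $\rho(\cdot)^{-1/p}$ near the origin in the constant-exponent case), so there is no reason for $M_N^0(f)$ to furnish an $L^1$ majorant against $\varphi$. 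Your own shell computation, done carefully, produces $\int_{B_{\ell+1}}M_N^0(f)\,dx=\infty$ in this range. The subsequent atomic argument for pointwise convergence, together with the proposed truncation workaround, is then moot, since dominated convergence is unavailable.

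The standard route---and almost certainly what the argument of \cite[Lemma 6.5]{yyyz16} amounts to in this setting---bypasses integrable majorants entirely by proving $\|f\ast\psi_k\|_{L^\infty(\rn)}\to 0$. Use the \emph{non-tangential} bound: for every $y\in x+B_k$ one has $|f\ast\psi_k(x)|\le c\,M_N(f)(y)$, hence for any $r\in(0,p_-)$ and any $\epsilon>0$,
\[
|f\ast\psi_k(x)|^r \le \frac{c^r}{|B_k|}\int_{x+B_k}[M_N(f)(y)]^r\,dy
\le c^r\epsilon^r + \frac{c^r}{b^k}\int_{\{M_N(f)>\epsilon\}}[M_N(f)]^r\,dy.
\]
The last integral is finite because $|\{M_N(f)>\lz\}|\ls \lz^{-p_-}+\lz^{-p_+}$ and $r<p_-\le p_+$; it is independent of $x$ and $k$. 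Letting $k\to\infty$ and then $\epsilon\downarrow 0$ gives $\|f\ast\psi_k\|_{L^\infty(\rn)}\to 0$, and therefore $\langle f\ast\psi_k,\varphi\rangle\to 0$ for every $\varphi\in\cs(\rn)$. No atomic decomposition, no commutation of limits, and no $L^1$ majorant are needed.
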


Via borrowing some ideas from \cite[Lemma 2.6]{zyl16}
and \cite[Lemma 2.2]{ns12},
we obtain the following result,
which is an anisotropic version of
\cite[Lemma 2.6]{zyl16}.

\begin{lemma}\label{fivel4}
Let $p(\cdot)\in C^{\log}(\rn)$ and $p_-\in(1,\fz)$.
Then there exists a positive constant $C$ such that,
for all subsets $E_1$, $E_2$ of $\rn$ with $E_1\st E_2$,
\begin{align*}
C^{-1}\lf(\frac{|E_1|}{|E_2|}\r)^{\frac 1{p_-}}
\le\frac{\|\chi_{E_1}\|_{L^{p(\cdot)}(\rn)}}
{\|\chi_{E_2}\|_{L^{p(\cdot)}(\rn)}}
\le C\lf(\frac{|E_1|}{|E_2|}\r)^{\frac 1{p_+}}.
\end{align*}
\end{lemma}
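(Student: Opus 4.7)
Denote $\lambda_j:=\|\chi_{E_j}\|_{\lv}$ for $j\in\{1,2\}$; without loss of generality assume $0<|E_1|\le|E_2|<\infty$, since the degenerate cases are immediate. By Remark~\ref{sr1}(ii), each $\lambda_j$ satisfies the modular identity
\begin{align*}
\int_{E_j}\lambda_j^{-p(x)}\,dx=1.
\end{align*}
Using that $\lambda_j^{-p(x)}$ sits between $\lambda_j^{-p_-}$ and $\lambda_j^{-p_+}$ (in one order or the other, depending on whether $\lambda_j\le 1$ or $\lambda_j>1$), I first extract the basic size-based bounds
$$\min\{|E_j|^{1/p_-},|E_j|^{1/p_+}\}\le \lambda_j\le \max\{|E_j|^{1/p_-},|E_j|^{1/p_+}\}.$$

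In the \emph{mixed-size regime} $|E_1|\le 1\le|E_2|$, these bounds already suffice to prove the lemma with $C=1$: indeed $\lambda_1\le|E_1|^{1/p_+}$ combined with $\lambda_2\ge|E_2|^{1/p_+}$ yields $\lambda_1/\lambda_2\le(|E_1|/|E_2|)^{1/p_+}$, and dually $\lambda_1\ge|E_1|^{1/p_-}$ together with $\lambda_2\le|E_2|^{1/p_-}$ gives $\lambda_1/\lambda_2\ge(|E_1|/|E_2|)^{1/p_-}$. No use of log-H\"older continuity is required in this subcase.

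In the two remaining regimes---$|E_2|\le 1$ (both sets small) and $|E_1|\ge 1$ (both sets large)---the basic bounds leave a gap of magnitude $|E_j|^{1/p_--1/p_+}$, which is unbounded as $|E_j|\to 0$ or $|E_j|\to\infty$. To close this gap I invoke the log-H\"older continuity of $p(\cdot)$ via \eqref{se6}--\eqref{se7}, following the strategy of \cite[Lemma~2.6]{zyl16} and \cite[Lemma~2.2]{ns12}. The pointwise estimates are used to show that $p(\cdot)$ is essentially constant at the relevant scale: in the large-set regime, \eqref{se7} forces $p(x)\approx p_\infty$ in the tail region of $E_j$, so that the modular identity refines to $\lambda_j\sim|E_j|^{1/p_\infty}$ up to a harmless bounded-region correction; in the small-set regime, \eqref{se6} forces $p(x)\approx p(x_{E_j})$ on each piece of $E_j$ at the relevant dilated-ball scale, upgrading the basic bound to $\lambda_j\sim|E_j|^{1/p(x_{E_j})}$ for a suitable representative point $x_{E_j}$. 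Substituting these refinements on both $\lambda_1$ and $\lambda_2$, and using that the representative exponents all lie in $[p_-,p_+]$, produces the desired inequalities with a universal constant $C$ depending only on the log-H\"older data of $p(\cdot)$.

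\textbf{Main obstacle.} The delicate point is the refined estimate for arbitrary measurable $E$ whose geometry is not a priori well-behaved (e.g., spread-out or disconnected pieces at vastly different distances from the origin), since the log-H\"older condition does not furnish a single point at which to evaluate $p(\cdot)$ on $E$. The resolution is to apply \eqref{se6}--\eqref{se7} pointwise, after a dyadic-type covering of $E$ by dilated balls from $\mathfrak{B}$, so as to control the ratios $|E|^{p(x)-p_\infty}$ in the tail (for large $|E|$) and $|E|^{p(x)-p(y)}$ for $x,y$ sharing a small dilated ball (for small $|E|$) uniformly in the geometry of $E$. Once this uniform control is in place, the passage from the basic bound to the refined one proceeds exactly as in \cite[Lemma~2.6]{zyl16}.
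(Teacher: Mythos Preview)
Your plan is correct and follows essentially the same three-case split as the paper (both sets small, both large, mixed), invoking the same sources \cite{zyl16,ns12} for the log-H\"older refinement; the paper additionally cites H\"ast\"o's local-to-global theorem \cite[Theorem~2.4]{h09} to obtain $\|\chi_{E_i}\|_{\lv}\sim|E_i|^{1/p_\infty}$ in the large-set case, which is the precise mechanism behind the ``tail'' control you describe. One minor difference: you dispatch the mixed-size case first and with the sharp constant $C=1$ via the bare modular bounds, whereas the paper derives it last by combining the refined estimates \eqref{five14} and \eqref{five16} from the other two cases---your route is slightly cleaner there.
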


\begin{proof}
In view of similarity, we only show that
\begin{align}\label{five13}
\frac{\|\chi_{E_1}\|_{L^{p(\cdot)}(\rn)}}
{\|\chi_{E_2}\|_{L^{p(\cdot)}(\rn)}}
\ls\lf(\frac{|E_1|}{|E_2|}\r)^{\frac 1{p_+}}.
\end{align}
To this end, for any $i\in\{1,2\}$, let
\begin{align*}
p_-(E_i):=\mathop\mathrm{ess\,inf}_{x\in E_i}p(x)\hspace{0.35cm}
{\rm and}\hspace{0.35cm}
p_+(E_i):=\mathop\mathrm{ess\,sup}_{x\in E_i}p(x).
\end{align*}

If $|E_2|\le1$, then, by \eqref{se6} and $p_-\in(1,\fz)$, we know that,
for any $i\in\{1,2\}$ and $x\in E_i$,
\begin{align*}
|E_i|^{\frac1{p(x)}}
\sim|E_i|^{\frac1{p_-(E_i)}}\sim|E_i|^{\frac1{p_+(E_i)}},
\end{align*}
which, combined with
\begin{align*}
|E_i|^{\frac1{p_-(E_i)}}
\le\lf\|\chi_{E_i}\r\|_\lv\le|E_i|^{\frac1{p_+(E_i)}},
\end{align*}
implies that
\begin{align}\label{five14}
\lf\|\chi_{E_i}\r\|_\lv\sim|E_i|^{\frac1{p(x)}}
\sim|E_i|^{\frac1{p_-(E_i)}}\sim|E_i|^{\frac1{p_+(E_i)}}.
\end{align}
By this, we conclude that, for any $x\in E_1$,
\begin{align}\label{five15}
\frac{\|\chi_{E_1}\|_{L^{p(\cdot)}(\rn)}}
{\|\chi_{E_2}\|_{L^{p(\cdot)}(\rn)}}
\sim\lf(\frac{|E_1|}{|E_2|}\r)^{\frac1{p(x)}}
\ls\lf(\frac{|E_1|}{|E_2|}\r)^{\frac 1{p_+}}.
\end{align}

If $|E_1|\ge1$, let $\{Q_j\}_{j\in\nn}$ be a partition of
$\rn$ such that, for any $i$, $j\in\nn$, $|Q_i|=|Q_j|=1$ and,
when ${\rm dist}(\vec{0}_n,Q_i)>{\rm dist}(\vec{0}_n,Q_j)$, $i>j$,
where, for any $i\in\nn$,
${\rm dist}(\vec{0}_n,Q_i):=\inf\{|x|:\ x\in Q_i\}$.
Then, by \cite[Theorem 2.4]{h09} and \eqref{five14}, we find that,
for any $i\in\{1,2\}$,
\begin{align}\label{five16}
\lf\|\chi_{E_i}\r\|_\lv\sim\lf\|\lf\{\lf\|
\chi_{E_i}\r\|_{L^{p(\cdot)}(Q_j)}\r\}_{j\in\nn}\r\|_{\ell^{p_\fz}}
\sim|E_i|^{\frac1{p_\fz}},
\end{align}
where $p_\fz$ is as in \eqref{se7}. From this, it follows that
\begin{align}\label{five17}
\frac{\|\chi_{E_1}\|_{L^{p(\cdot)}(\rn)}}
{\|\chi_{E_2}\|_{L^{p(\cdot)}(\rn)}}
\sim\lf(\frac{|E_1|}{|E_2|}\r)^{\frac1{p_\fz}}
\ls\lf(\frac{|E_1|}{|E_2|}\r)^{\frac 1{p_+}}.
\end{align}

If $|E_1|<1<|E_2|$, then, from \eqref{five14} and \eqref{five16},
we deduce that
\begin{align}\label{five18}
\frac{\|\chi_{E_1}\|_{L^{p(\cdot)}(\rn)}}
{\|\chi_{E_2}\|_{L^{p(\cdot)}(\rn)}}
\sim\frac{|E_1|^{\frac1{p_+(E_1)}}}{|E_2|^{\frac1{p_\fz}}}
\ls\lf(\frac{|E_1|}{|E_2|}\r)^{\frac 1{p_+}}.
\end{align}
Combining \eqref{five15}, \eqref{five17} and \eqref{five18},
we obtain \eqref{five13}. This finishes the proof of Lemma \ref{fivel4}.
\end{proof}

The following lemma is just \cite[Lemma 2.3]{blyz10},
which is a slight modification of \cite[Theorem 11]{cm90}.

\begin{lemma}\label{fivel2}
Let $A$ be a dilation. Then there exists a collection
$$\mathcal{Q}:=\lf\{Q_\alpha^k\subset\rn:\ k\in\mathbb{Z},
\,\alpha\in I_k\r\}$$
of open subsets, where $I_k$ is certain index set, such that
\begin{enumerate}
\item[{\rm (i)}] for any $k\in\zz$,
$\lf|\rn\setminus\bigcup_{\alpha}Q_\alpha^k\r|=0$
and, when $\alpha\neq\beta$,
$Q_\alpha^k\cap Q_\beta^k=\emptyset$;
\item[{\rm(ii)}] for any $\alpha,\,\beta,\,k,\,\ell$ with $\ell\geq k$,
either $Q_\alpha^k\cap Q_\beta^\ell=\emptyset$ or
$Q_\alpha^\ell\subset Q_\beta^k$;
\item[{\rm(iii)}] for each $(\ell,\beta)$ and each $k<\ell$,
there exists a unique $\alpha$ such that
$Q_\beta^\ell\subset Q_\alpha^k$;
\item[{\rm(iv)}] there exist some negative integer $v$
and positive integer $u$ such that, for any $Q_\alpha^k$
with $k\in\mathbb{Z}$ and $\alpha\in I_k$,
there exists $x_{Q_\alpha^k}\in Q_\alpha^k$
satisfying that, for any $x\in Q_\alpha^k$,
$$x_{Q_\alpha^k}+B_{vk-u}
\subset Q_\alpha^k\subset x+B_{vk+u}.$$
\end{enumerate}
\end{lemma}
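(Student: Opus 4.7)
My plan is to invoke Christ's general construction of dyadic cubes on a space of homogeneous type and then rescale/reparametrize to match the integer-indexed anisotropic dilation structure of $(\rn,\rho,dx)$.

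First, I would verify that $(\rn,\rho,dx)$ is a space of homogeneous type in the sense of Coifman and Weiss: the quasi-triangle inequality for $\rho$ is part of Definition \ref{sd2}, and the doubling property for Lebesgue measure with respect to $\rho$-balls follows from $|x+B_k|=b^k$ together with $B_k+B_k\subset B_{k+\tau}$ from \eqref{se1}. This places $(\rn,\rho,dx)$ in the setting of \cite[Theorem 11]{cm90}.

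Second, I apply Christ's theorem to this space to produce, for some $\delta_0\in(0,1)$ and positive constants $c_0,c_1$, a collection $\{\widetilde Q_\alpha^k\}_{k\in\zz,\alpha\in I_k}$ of open subsets of $\rn$ that directly satisfies (i), (ii) and (iii), together with a ball sandwich: for each pair $(k,\alpha)$ there is a center $z_\alpha^k\in\widetilde Q_\alpha^k$ with
$$\lf\{y\in\rn:\ \rho(y-z_\alpha^k)<c_0\delta_0^k\r\}\subset\widetilde Q_\alpha^k\subset\lf\{y\in\rn:\ \rho(y-z_\alpha^k)<c_1\delta_0^k\r\}.$$
Here I exploit the special feature that $\rho$-balls around any point are precisely translates of dilated balls, namely $\{y:\rho(y-z)<b^\ell\}=z+B_\ell$ for any $\ell\in\zz$.

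Third, I pass from the fractional $\delta_0$-scale to the integer $b$-scale. Fix a negative integer $v$ with $b^v<\delta_0$ and a positive integer $u_0$ large enough that $c_0 b^{-u_0}\le1\le c_1 b^{u_0}$. For every $k\in\zz$ there is an integer $m(k)\in[vk-u_0,vk+u_0]$ such that $z_\alpha^k+B_{m(k)-u_0}\subset\widetilde Q_\alpha^k\subset z_\alpha^k+B_{m(k)+u_0}$. Enlarging $u_0$ by a bounded amount absorbs the offset $m(k)-vk$, so that with one fixed $u\in\nn$ we obtain $z_\alpha^k+B_{vk-u}\subset\widetilde Q_\alpha^k\subset z_\alpha^k+B_{vk+u}$. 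Finally I upgrade the outer containment from the center $z_\alpha^k$ to an arbitrary $x\in\widetilde Q_\alpha^k$: for such $x$ and any $y\in\widetilde Q_\alpha^k$, Definition \ref{sd2}(iii) gives $\rho(y-x)\le H[\rho(y-z_\alpha^k)+\rho(z_\alpha^k-x)]<2Hc_1\delta_0^k$, and iterating \eqref{se1} a bounded number of times depending only on $H$, $c_1$ and $\tau$ absorbs the factor $2Hc_1$ into a further enlargement of $u$. Setting $Q_\alpha^k:=\widetilde Q_\alpha^k$ and $x_{Q_\alpha^k}:=z_\alpha^k$ yields (iv).

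The main obstacle is the matching in step three: if $\delta_0$ is not itself a power of $b$, the naive re-indexing slightly distorts the nesting layers of Christ's hierarchy. The cleanest remedy, and the one adopted in \cite[Lemma 2.3]{blyz10}, is to re-execute Christ's inductive selection procedure from scratch using partitioning scale $b^v$ fixed at the outset for a sufficiently negative integer $v$; once the quasi-triangle constant $H$, the integer $\tau$ from \eqref{se1} and the doubling constant $b$ of $(\rn,\rho,dx)$ have been recorded, all the remaining work is routine bookkeeping.
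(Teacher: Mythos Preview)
Your proposal is correct and is exactly the approach behind the result the paper cites: the paper does not prove this lemma but simply quotes \cite[Lemma 2.3]{blyz10}, which is precisely the adaptation of Christ's construction \cite[Theorem~11]{cm90} to the anisotropic setting that you outline. Your identification of the scale-matching issue and the fix (run Christ's selection with partitioning scale $b^v$ from the start) is on target and matches what \cite{blyz10} does.
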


In what follows, we call
$\mathcal{Q}:=
\{Q_\alpha^k\}_{k\in\mathbb{Z},\,\alpha\in I_k}$
from Lemma \ref{fivel2} \emph{dyadic cubes} and
$k$ the \emph{level}, denoted by $\ell(Q_\alpha^k)$,
of the dyadic cube $Q_\alpha^k$
with $k\in\mathbb{Z}$ and $\alpha\in I_k$.

\begin{remark}\label{fiver1}
In the definition of $(p(\cdot),r,s)$-atoms (see Definition \ref{fd1}),
if we replace dilated balls $\mathfrak{B}$ by
dyadic cubes, then, from Lemma \ref{fivel2}, we deduce that
the corresponding anisotropic variable atomic Hardy-Lorentz space
coincides with the original one (see Definition \ref{fd2})
in the sense of equivalent quasi-norms.
\end{remark}

The following  Calder\'{o}n reproducing formula is just
\cite[Proposition 2.14]{blyz10}.

\begin{lemma}\label{fivel3}
Let $s\in\mathbb{Z_+}$ and $A$ be a dilation.
Then there exist $\varphi,\,\psi\in\cs(\rn)$ such that
\begin{enumerate}
\item[{\rm(i)}] $\supp\varphi\subset B_0,
\,\int_{\rn}x^\gamma\varphi(x)\,dx=0$ for any
$\gamma\in(\zz_+)^n$ with $|\gamma|\leq s,
\,\widehat{\varphi}(\xi)\geq C$
for any $\xi\in\{x\in\rn:\ a\leq\rho(x)\leq b\}$,
where $0<a<b<1$ and $C$ are positive constants;
\item[{\rm(ii)}] $\supp \widehat{\psi}$
is compact and bounded away from the origin;
\item[{\rm(iii)}] $\sum_{j\in\mathbb{Z}}
\widehat{\psi}((A^\ast)^j\xi)\widehat{\varphi}((A^\ast)^j\xi)=1$
for any $\xi\in\rn\setminus\{\vec{0}_n\}$,
where $A^\ast$ denotes the adjoint matrix of $A$.
\end{enumerate}

Moreover, for any $f\in\cs'_0(\rn),\,f=
\sum_{j\in\mathbb{Z}}f\ast\psi_j\ast\varphi_j$ in $\cs'(\rn)$.
\end{lemma}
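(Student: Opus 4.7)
The plan is to construct $\varphi$ and $\psi$ via explicit Fourier-side formulas and then verify the reproducing formula in $\cs'(\rn)$ by a truncation argument. \emph{Step one: construction of $\varphi$.} I would pick $\eta\in C_c^\infty(\rn)$ with $\supp\eta\subset B_0$, $\eta\ge 0$ and $\int_\rn\eta(x)\,dx>0$, and define $\varphi$ on the Fourier side by
\begin{align*}
\widehat\varphi(\xi):=|\xi|^{2(s+1)}\,\widehat\eta(\xi),
\end{align*}
where $|\xi|^2=\xi_1^2+\cdots+\xi_n^2$. Since $|\xi|^{2(s+1)}$ is a polynomial in $\xi$, this corresponds on the space side to a constant-coefficient differential operator applied to $\eta$, hence $\varphi\in C_c^\infty(\rn)$ with $\supp\varphi\subset B_0$. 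The factor $|\xi|^{2(s+1)}$ vanishes at $\xi=\vec0_n$ to order $2s+2>s$, yielding $\int_\rn x^\gamma\varphi(x)\,dx=0$ for $|\gamma|\le s$. Since $\widehat\eta$ is continuous with $\widehat\eta(\vec0_n)>0$, there is a Euclidean neighborhood $U$ of $\vec0_n$ on which $\widehat\eta$ is bounded below by a positive constant; one can then choose $0<a<b<1$ so that $\{a\le\rho(\xi)\le b\}\subset U\setminus\{\vec0_n\}$ (possible because both $\rho$ and the Euclidean norm vanish exactly at $\vec0_n$), giving $\widehat\varphi(\xi)\ge C>0$ on this $\rho$-annulus.

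\emph{Step two: construction of $\psi$.} Since $A^\ast$ shares the spectrum of $A$, it is also a dilation, so for every $\xi\neq\vec0_n$ the orbit $\{(A^\ast)^j\xi\}_{j\in\zz}$ escapes to infinity as $j\to+\infty$ and collapses to $\vec0_n$ as $j\to-\infty$. Using this exhaustion, I would select $\Phi_0\in C_c^\infty(\rn)$ with $\supp\Phi_0\subset\{a\le\rho\le b\}$ and with $\{\Phi_0>0\}$ containing a sufficiently thick annular sub-region whose $(A^\ast)^j$-iterates cover $\rn\setminus\{\vec0_n\}$, then normalise by setting
\begin{align*}
\Phi(\xi):=\frac{\Phi_0(\xi)}{\sum_{j\in\zz}\Phi_0((A^\ast)^j\xi)},
\end{align*}
which yields $\Phi\in C_c^\infty(\rn)$ with $\supp\Phi\subset\{a\le\rho\le b\}$ and $\sum_{j\in\zz}\Phi((A^\ast)^j\xi)=1$ for $\xi\neq\vec0_n$. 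I would then define $\widehat\psi(\xi):=\Phi(\xi)/\widehat\varphi(\xi)$ on $\supp\Phi$ and $\widehat\psi(\xi):=0$ elsewhere; the lower bound $\widehat\varphi\ge C$ on $\supp\Phi$ makes $\widehat\psi$ a $C_c^\infty(\rn)$ function supported compactly and bounded away from $\vec0_n$, which is (ii), with $\psi\in\cs(\rn)$. Property (iii) is then immediate from $\widehat\psi((A^\ast)^j\xi)\widehat\varphi((A^\ast)^j\xi)=\Phi((A^\ast)^j\xi)$.

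\emph{Step three: the reproducing formula.} Writing $K_j:=\psi_j\ast\varphi_j$, we have $\widehat{K_j}(\xi)=\Phi((A^\ast)^j\xi)$. For $f\in\cs'_0(\rn)$, I would analyse the partial sums $S_Nf:=\sum_{|j|\le N}f\ast K_j$ via their spectral multiplier $F_N(\xi):=\sum_{|j|\le N}\Phi((A^\ast)^j\xi)$. By the partition-of-unity identity and the support structure of $\Phi$, $F_N\in C^\infty(\rn)$ equals $1$ on a region exhausting $\rn\setminus\{\vec0_n\}$ as $N\to\infty$, and the residue $1-F_N$ splits into a high-frequency tail $V_N^{{\rm hi}}$ (from $j<-N$, whose Fourier support escapes to infinity) and a low-frequency tail $V_N^{{\rm lo}}$ (from $j>N$, whose Fourier support shrinks to $\vec0_n$). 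Pairing with $g\in\cs(\rn)$ gives
\begin{align*}
\langle f-S_Nf,g\rangle=\langle f,\wz{V_N^{{\rm hi}}}\ast g\rangle+\langle f,\wz{V_N^{{\rm lo}}}\ast g\rangle,
\end{align*}
with $\wz h(x):=h(-x)$. The high-frequency contribution tends to $0$ because $\wz{V_N^{{\rm hi}}}\ast g\to 0$ in $\cs(\rn)$ (the rapid decay of $\widehat g$ overcomes the diverging support of $\widehat{V_N^{{\rm hi}}}$), while the low-frequency contribution is to be handled by identifying $V_N^{{\rm lo}}$ with a dilate, at scale $N+1$, of a fixed auxiliary function and invoking $f\in\cs'_0(\rn)$.

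The main obstacle is precisely this low-frequency step. The natural candidate $\widehat{V_N^{{\rm lo}}}(\xi)=\sum_{j>N}\Phi((A^\ast)^j\xi)$ is not continuous at $\xi=\vec0_n$ (it tends to $1$ as $\xi\to\vec0_n$ but vanishes at $\vec0_n$), so $V_N^{{\rm lo}}$ is not itself a Schwartz function and $f\ast V_N^{{\rm lo}}$ is not a priori meaningful in $\cs'(\rn)$. Circumventing this requires either a smooth cut-off approximation combined with a telescoping identity, or equivalently rewriting the low-frequency piece as a limit of convolutions $f\ast\phi_k$ with a fixed $\phi\in\cs(\rn)$ as $k\to+\infty$, at which point the defining property of $\cs'_0(\rn)$ directly supplies the vanishing needed to close the argument.
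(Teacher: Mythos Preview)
The paper does not prove this lemma; it simply quotes it as \cite[Proposition~2.14]{blyz10}. Your proposal is therefore not competing against any argument in the paper, and what you have written is essentially the standard construction that underlies that reference. Two comments are in order.

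First, a small gap in Step~1: as written, $\widehat\eta$ is complex-valued, so the inequality $\widehat\varphi(\xi)\ge C$ is not meaningful. You should take $\eta$ real and \emph{even} (e.g.\ radial), which forces $\widehat\eta$ to be real-valued; then $\widehat\eta(\vec0_n)>0$ and continuity give $\widehat\eta>0$ on a neighborhood of $\vec0_n$, and your $\widehat\varphi$ is genuinely positive on the chosen $\rho$-annulus. You should also note that the $(A^\ast)^j$-iterates in Step~2 are governed by the quasi-norm associated with $A^\ast$, not with $A$; the covering statement needs a word about why a $\rho$-annulus contains a fundamental region for the $A^\ast$-action (both quasi-norms are comparable on compact sets away from $\vec0_n$, so this is routine).

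Second, the ``obstacle'' you flag in Step~3 is not genuine. Set
\[
\widehat\Theta(\xi):=\sum_{j\ge0}\Phi((A^\ast)^j\xi)\quad(\xi\neq\vec0_n),\qquad \widehat\Theta(\vec0_n):=1.
\]
Since $\supp\Phi$ is bounded away from $\vec0_n$, for $\xi$ in a neighborhood of $\vec0_n$ every term $\Phi((A^\ast)^j\xi)$ with $j\le0$ vanishes, so the full partition of unity gives $\widehat\Theta\equiv1$ there; for large $\xi$ every term with $j\ge0$ vanishes, so $\widehat\Theta$ is compactly supported. Thus $\widehat\Theta\in C_c^\infty(\rn)$ and $\Theta\in\cs(\rn)$, with $\widehat\Theta(\vec0_n)=1$. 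The identity $\widehat\Theta-\widehat\Theta((A^\ast)\cdot)=\Phi$ telescopes to
\[
\sum_{|j|\le N}f\ast K_j=f\ast\Theta_{-N}-f\ast\Theta_{N+1}.
\]
Now $f\ast\Theta_{N+1}\to0$ in $\cs'(\rn)$ precisely because $f\in\cs'_0(\rn)$, and $f\ast\Theta_{-N}\to f$ in $\cs'(\rn)$ for every $f\in\cs'(\rn)$ since $\int_\rn\Theta=\widehat\Theta(\vec0_n)=1$ (this is \cite[p.\,15, Lemma~3.8]{mb03}, already used in the paper). Your low-frequency tail $V_N^{\rm lo}$, once given the value~$1$ at $\vec0_n$ on the Fourier side, \emph{is} the Schwartz function $\Theta_{N+1}$; there is no discontinuity to repair and no further cut-off needed.
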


Now we prove Theorem \ref{fivet1}.

\begin{proof}[Proof of Theorem \ref{fivet1}]
We first show the necessity of this theorem.
Let $f\in \vhlpq$. It follows from Lemma \ref{fivel1} that
$f\in\cs'_0(\rn)$. On the other hand, for any $k_0\in\zz$,
due to Theorem \ref{ft1} and Remark \ref{fiver1},
we can decompose $f$ as follows
$$f=\sum_{k=-\fz}^{k_0-1}\sum_{i\in\nn}\lik\aik
+\sum_{k=k_0}^{\fz}\sum_{i\in\nn}\cdots=:f_1+f_2,$$
where $\{\lik\}_{i\in\nn,k\in\zz}$ and $\{\aik\}_{i\in\nn,k\in\zz}$
are as in Theorem \ref{ft1}
satisfying \eqref{fe2}. Let $v,u$ be as in Lemma
\ref{fivel2} and $w:=u-v+2\tau$. Then we have
\begin{align}\label{five2}
&\lf\|\chi_{\{x\in\rn:\ S(f)(x)>2^{k_0}\}}\r\|_{\lv}\\
&\hs\ls\lf\|\chi_{\{x\in\rn:\ S(f_1)(x)>2^{k_0-1}\}}\r\|_{\lv}
+\lf\|\chi_{\{x\in E_{k_0}:\ S(f_2)(x)>2^{k_0-1}\}}\r\|_{\lv}\noz\\
&\hs\hs+\lf\|\chi_{\{x\in (E_{k_0})^\com:\ S(f_2)(x)>2^{k_0-1}\}}\r\|_{\lv}\noz\\
&\hs=:{\rm I}_1+{\rm I}_2+{\rm I}_3,\noz
\end{align}
where $E_{k_0}:=\bigcup_{k=k_0}^\fz\bigcup_{i\in\nn}A^{w}\Qik$
and $\{\Qik\}_{i\in\nn,k\in\zz}\st\mathcal{Q}$ are as in Lemma \ref{fivel2}.

Obviously,
\begin{align}\label{five3}
{\rm I}_1&\ls\lf\|\chi_{\{x\in\rn:\ \sum_{k=-\fz}^{k_0-1}\sum_{i\in\mathbb{N}}
\lambda_i^kS(a_i^k)(x)\chi_{A^{w}Q_i^k}(x)>2^{k_0-2}\}}\r\|_{\lv}\\
&\hs+\lf\|\chi_{\{x\in\rn:\ \sum_{k=-\fz}^{k_0-1}\sum_{i\in\mathbb{N}}
\lambda_i^kS(a_i^k)(x)\chi_{(A^{w}Q_i^k)^\com}(x)>2^{k_0-2}\}}\r\|_{\lv}\noz\\
&=:{\rm I}_{1,1}+{\rm I}_{1,2}.\noz
\end{align}
For ${\rm I}_{1,1}$, by \cite[Theorem 3.2]{blyz10}, Lemma \ref{fl2},
Remark \ref{fr1}(i) and a proof similar to that of \eqref{fe8}, we conclude
that
\begin{align}\label{five4}
\lf[\sum_{k_0\in\zz}2^{k_0q}({\rm I}_{1,1})^q\r]^{\frac1q}
\ls\lf[\sum_{k\in\zz}2^{kq}
\lf\|\sum_{i\in\nn}\chi_{\Qik}\r\|^{q}_{\lv}\r]^{\frac1q}
\sim\|f\|_{\vahlpq}.
\end{align}
To deal with ${\rm I}_{1,2}$,
assume that $a$ is a $(p(\cdot),r,s)$-atom
supported on a dyadic cube $Q$. For any $j\in\nn$,
let
\begin{align*}
U_j:=x_{Q}+\lf(B_{v[\ell(Q)-j-1]+u+2\tau}
\setminus B_{v[\ell(Q)-j]+u+2\tau}\r).
\end{align*}
Then, by Lemma \ref{fivel2}(iv),
we know that, for any $x\in(A^{w}Q)^\com$, there exists some $j_0\in\nn$
such that $x\in U_{j_0}$. For this $j_0$,
choose $N\in\nn$ lager enough such that
$$(N-\bz)vj_0+(1-v)\lf(\frac1r-\bz\r)\ell(Q)<0$$
with $\bz$ as in \eqref{fe27}.
By this and an argument similar to that
used in the proof \cite[(3.3)]{lyy16LP}, we find that
\begin{align*}
S(a)(x)\ls b^{Nvj_0}
b^{-\frac{v\ell(Q)}r}\|a\|_{L^r(Q)}.
\end{align*}
From this and
the size condition of $a$, we deduce that
\begin{align}\label{five6}
S(a)(x)&\ls b^{(N-\bz)vj_0+(1-v)(\frac1r-\bz)\ell(Q)}
\lf\|\chi_Q\r\|_{\lv}^{-1}\frac{|Q|^\bz}{b^{[\ell(Q)-j_0]v\bz}}\\
&\ls\lf\|\chi_Q\r\|_{\lv}^{-1}\lf[\frac{|Q|}{\rho(x-x_Q)}\r]^\bz
\ls\lf\|\chi_Q\r\|_{\lv}^{-1}\lf[\HL(\chi_Q)(x)\r]^\bz.\noz
\end{align}
By \eqref{five6}, similarly to \eqref{fe10}, we conclude that
\begin{align*}
\lf[\sum_{k_0\in\zz}2^{k_0q}({\rm I}_{1,2})^q\r]^{\frac1q}
\ls\lf[\sum_{k\in\zz}2^{kq}
\lf\|\sum_{i\in\nn}\chi_{\Qik}\r\|^{q}_{\lv}\r]^{\frac1q}
\sim\|f\|_{\vahlpq},
\end{align*}
which, together with \eqref{five3} and \eqref{five4},
further implies that
\begin{align}\label{five7}
\lf[\sum_{k_0\in\zz}2^{k_0q}({\rm I}_1)^q\r]^{\frac1q}
\ls\|f\|_{\vahlpq}.
\end{align}

For ${\rm I}_2$ and ${\rm I}_3$, from \eqref{five6}
and a proof similar to those of \eqref{fe11} and \eqref{fe28},
it follows that
\begin{align}\label{five8}
\lf[\sum_{k_0\in\zz}2^{k_0q}({\rm I}_2)^q\r]^{\frac1q}
\ls\|f\|_{\vahlpq}\hspace{0.3cm}{\rm and}
\hspace{0.3cm}\lf[\sum_{k_0\in\zz}2^{k_0q}({\rm I}_3)^q\r]^{\frac1q}
\ls\|f\|_{\vahlpq}.
\end{align}

Combining \eqref{five2}, \eqref{five7} and \eqref{five8},
we obtain
\begin{align*}
\|S(f)\|_{\vlpq}&\sim\lf[\sum_{k_0\in\zz}2^{k_0q}
\lf\|\chi_{\{x\in\rn:\ |S(f)(x)|>2^{k_0}\}}\r\|_{\lv}^q\r]^{\frac1q}\\
&\ls\lf[\sum_{k_0\in\zz}2^{k_0q}({\rm I}_1)^q\r]^{\frac1q}+
\lf[\sum_{k_0\in\zz}2^{k_0q}({\rm I}_2)^q\r]^{\frac1q}+
\lf[\sum_{k_0\in\zz}2^{k_0q}({\rm I}_3)^q\r]^{\frac1q}\\
&\ls\|f\|_{\vahlpq}\sim\|f\|_{\vhlpq}
\end{align*}
with the usual modification made when $q=\fz$,
which implies that $S(f)\in\vlpq$ and
$$\|S(f)\|_{\vlpq}\ls\|f\|_{\vhlpq}.$$
This shows the necessity of Theorem \ref{fivet1}.

Next we prove the sufficiency of Theorem \ref{fivet1}.
Let $f\in\cs'_0(\rn)$ and $S(f)\in\vlpq$. Then we need
to prove that $f\in\vhlpq$ and
\begin{align}\label{five19}
\|f\|_{\vhlpq}\ls\|S(f)\|_{\vlpq}.
\end{align}
To this end, for any $k\in\mathbb{Z}$, let
$\Omega_k:=\{x\in\rn:\ S(f)(x)>2^k\}$ and
$$\mathcal{Q}_k:=\lf\{Q\in\mathcal{Q}:
\ |Q\cap\Omega_k|>\frac{|Q|}2\ \ {\rm and}\
\ |Q\cap\Omega_{k+1}|\leq\frac{|Q|}2\r\}.$$
Clearly, for each $Q\in\mathcal{Q}$,
there exists a unique $k\in\mathbb{Z}$
such that $Q\in\mathcal{Q}_k$.
Let $\{Q_i^k\}_i$ be the set of all \emph{maximal dyadic cubes}
in $\mathcal{Q}_k$,
namely, there exists no $Q\in\mathcal{Q}_k$
such that $Q_i^k\subsetneqq Q$ for any $i$.

Let $u$, $v$ be as in Lemma \ref{fivel2} and,
for any $Q\in\mathcal{Q}$,
$$\widehat{Q}:=\lf\{(y,t)\in\rn\times\mathbb{R}:\
y\in Q,\ v\ell(Q)+u+\tau\leq t<v[\ell(Q)-1]+u+\tau\r\}.$$
Then $\{\widehat{Q}\}_{Q\in\mathcal{Q}}$ are mutually disjoint and
\begin{align}\label{five9}
\rn\times\mathbb{R}=\bigcup_{k\in\zz}\bigcup_i\
\bigcup_{Q\subset Q_i^k,\,Q\in\mathcal{Q}_k}\widehat{Q}
=:\bigcup_{k\in\zz}\bigcup_i B_{k,\,i},
\end{align}
Obviously, $\{B_{k,i}\}_{k\in\zz,\,i}$
are mutually disjoint by Lemma \ref{fivel2}(ii).

Let $\psi$ and $\varphi$ be as in Lemma \ref{fivel3}. Then, by
Lemma \ref{fivel3}, the properties of the tempered distributions
(see \cite[Theorem 2.3.20]{lg08} or \cite[Theorem 3.13]{sw71})
and \eqref{five9}, we find that, for any $f\in\cs'_0(\rn)$ with
$S(f)\in\vlpq$ and $x\in\rn$,
\begin{align*}
f(x)
&=\sum_{k\in\mathbb{Z}}f\ast\psi_k\ast\varphi_k(x)
=\int_{\rn\times\mathbb{R}}
f\ast\psi_t(y)\ast\varphi_t(x-y)\,dydm(t)\\
&=\sum_{k\in\mathbb{Z}}\sum_i\int_{B_{k,\,i}}
f\ast\psi_t(y)\ast\varphi_t(x-y)\,dydm(t)
=:\sum_{k\in\mathbb{Z}}\sum_i h_i^k(x)
\end{align*}
in $\cs'(\rn)$, where $m(t)$ is
the \emph{counting measure} on $\mathbb{R}$, namely,
for any set $E\st\mathbb{R}$, $m(E):=\sharp E$ if $E$ has only finite elements,
or else $m(E):=\fz$. Moreover, by
an argument similar to that used in the proofs of
\cite[(3.24), (3.29) and (3.30)]{lyy16LP}, it is easy to see that
there exists some $C_0\in(0,\fz)$ such that,
for any $r\in (\max\{p_+,1\},\fz)$, $k\in\zz$, $i$ and
$\az\in(\zz_+)^n$ as in Definition \ref{fd1},
\begin{align}\label{five10}
\supp h_i^k
\subset x_{Q_i^k}+B_{v[\ell(Q_i^k)-1]+
u+3\tau}=:B_i^k,
\end{align}
\begin{align}\label{five11}
\lf\|h_i^k\r\|_{L^r(\rn)}
\le C_02^k\lf|B_i^k\r|^{1/r}
\end{align}
and
\begin{align}\label{five12}
\int_{\rn}h_i^k(x)x^\az\,dx=0,
\end{align}
namely, $h_i^k$ is a $(p(\cdot),r,s)$-atom multiplied by a constant.

For any $k\in\zz$ and $i$,
let $\lik:=C_02^k\|\chi_{B_i^k}\|_\lv$ and
$a_i^k:=(\lambda_i^k)^{-1}h_i^k$, where
$C_0$ is a positive constant as in \eqref{five11}.
Then we obtain
$$f=\sum_{k\in\mathbb{Z}}\sum_i h_i^k
=\sum_{k\in\zz}\sum_i
\lambda_i^k a_i^k\qquad {\rm in}\quad \cs'(\rn).$$
By \eqref{five10} and \eqref{five12},
we know that $\supp a_i^k\subset B_i^k$ and
$a_i^k$ also has the vanishing moments up to $s$.
From \eqref{five11} and Lemma \ref{fivel2}(iv),
it follows that
$\|a_i^k\|_{L^r(\rn)}\leq\|\chi_{B_i^k}\|_\lv^{-1}|B_i^k|^{1/r}$.
Therefore, $a_i^k$ is a $(p(\cdot),r,s)$-atom for any $k\in\zz$ and $i$.
Moreover, by Theorem \ref{ft1},
the mutual disjointness of $\{Q_i^k\}_{k\in\mathbb{Z},\,i}$,
Lemma \ref{fivel2}(iv) again, $|\Qik\cap\Omega_k|\ge\frac{|\Qik|}{2}$,
Lemmas \ref{fivel4} and \ref{sl3*},
we conclude that
\begin{align*}
\|f\|_{\vhlpq}
&\sim\lf[\sum_{k\in\zz}\lf\|\lf\{\sum_{i\in\nn}
\lf[\frac{\lz_i^k\chi_{\Bik}}{\|\chi_{\Bik}\|_{\lv}}\r]^
{\underline{p}}\r\}^{1/\underline{p}}\r\|_{\lv}^q\r]^{\frac1q}\\
&\sim\lf[\sum_{k\in\zz}2^{kq}\lf\|\lf(\sum_{i\in\nn}
\chi_{\Bik}\r)^{\frac 1{p_-}}\r\|_{\lv}^q\r]^{\frac1q}
\sim\lf[\sum_{k\in\zz}2^{kq}\lf\|\lf(\sum_{i\in\nn}
\chi_{\Qik}\r)^{\frac 1{p_-}}\r\|_{\lv}^q\r]^{\frac1q}\\
&\sim\lf[\sum_{k\in\zz}2^{kq}\lf\|\lf(\sum_{i\in\nn}
\chi_{\Qik}\r)^{1/2}\r\|
_{L^{\frac{2p(\cdot)}{p_-}}(\rn)}^{\frac {2q}{p_-}}\r]^{\frac1q}
\ls\lf[\sum_{k\in\zz}2^{kq}\lf\|\lf(\sum_{i\in\nn}
\chi_{\Qik\cap\Omega_k}\r)^{1/2}\r\|
_{L^{\frac{2p(\cdot)}{p_-}}(\rn)}^{\frac {2q}{p_-}}\r]^{\frac1q}\\
&\sim\lf(\sum_{k\in\zz}2^{kq}\lf\|\chi_{\Omega_k}\r\|_{\lv}^q\r)^{\frac1q}
\sim\|S(f)\|_{L^{p(\cdot),q}(\rn)}
\end{align*}
with the usual modification made when $q=\fz$,
which implies that $f\in\vhlpq$ and
$$\|f\|_{\vhlpq}\ls\|S(f)\|_{\vlpq}.$$
This finishes the proof of \eqref{five19} and hence
Theorem \ref{fivet1}.
\end{proof}

\section{Real interpolation \label{s6}}
\hskip\parindent
As another application of the atomic characterization of $\vhlpq$,
in this section, we obtain a real interpolation result between
$\vh$ and $\lfz$. Moreover, using this result, together with
\cite[Corollary 4.20]{zsy16} and \cite[Remark 4.2(ii)]{kv14},
we then show
that the anisotropic variable Hardy-Lorentz space
$\vhlpq$ with $p_-\in(1,\infty)$
coincides with the variable Lorentz space $\vlpq$.

To state the main result of this section,
we first recall some basic notions about the real interpolation
(see \cite{bl76}). Assume that $(X_1,\,X_2)$ is a
compatible couple of quasi-normed spaces, namely,
$X_1$ and $X_2$ are two quasi-normed linear spaces
which are continuously embedded in some larger
topological vector space. Let
$$X_1+X_2:=\{f_1+f_2:\ f_1\in X_1,\,f_2\in X_2\}.$$
For any $t\in(0,\fz]$, the \emph{Peetre $K$-functional} on $X_1+X_2$ is defined
by setting, for any $f\in X_0+X_1$,
$$K(t,f;X_1,X_2):=\inf\lf\{\|f_1\|_{X_1}+t\|f_2\|_{X_2}:\
f=f_1+f_2,\,f_1\in X_1\ {\rm and}\ f_2\in X_2\r\}.$$
Moreover, for any $\theta\in(0,1)$ and $q\in(0,\fz]$,
the \emph{real interpolation space} $(X_1,\,X_2)_{\theta,q}$
is defined as
\begin{align*}
(X_1,\,X_2)_{\theta,q}:=\lf\{f\in X_1+X_2:\
\|f\|_{\theta,q}:=\lf[\int_0^\fz\lf\{t^{-\theta}
K(t,f;X_1,X_2)\r\}^q\,\frac{dt}t\r]^{1/q}<\fz\r\}.
\end{align*}

\begin{definition}\label{sixd1}
Let $p(\cdot)\in C^{\log}(\rn)$ and
$N\in\mathbb{N}\cap[\lfloor(\frac1{\underline{p}}-1)\frac{\ln b}{\ln
\lambda_-}\rfloor+2,\fz)$,
where $\underline{p}$ is as in \eqref{se4}.
The \emph{anisotropic variable Hardy space},
denoted by $\vh$, is defined by setting
\begin{equation*}
\vh
:=\lf\{f\in\cs'(\rn):\ M_N^0(f)\in\lv\r\}
\end{equation*}
and, for any $f\in\vh$, let
$\|f\|_{\vh}:=\| M_N^0(f)\|_{\lv}$.
\end{definition}

The main result of this section is stated as follows.

\begin{theorem}\label{sixt1}
Let $p(\cdot)\in C^{\log}(\rn)$, $q\in(0,\fz]$ and $\theta\in(0,1)$.
Then it holds true that
\begin{equation*}
(\vh, L^\fz(\rn))_{\theta,q}=H_A^{\wz p(\cdot),q}(\rn),
\end{equation*}
where $\frac1{\wz p(\cdot)}=\frac{1-\theta}{p(\cdot)}$.
\end{theorem}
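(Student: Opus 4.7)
The plan is to prove both embeddings of the interpolation identity using, as the central tool, the ``good--bad'' splitting of elements of the anisotropic variable Hardy--Lorentz space announced as Lemma \ref{sixl1}. The main algebraic identity that drives the whole argument is the scaling relation
\begin{equation*}
\lf\|\chi_B\r\|_{L^{p(\cdot)}(\rn)}=\lf\|\chi_B\r\|_{L^{\wz p(\cdot)}(\rn)}^{1-\theta}
\qquad\text{for any } B\in\mathfrak{B},
\end{equation*}
which follows at once from $\wz p(\cdot)=p(\cdot)/(1-\theta)$ and Remark \ref{sr1}(i); it is what allows us to recycle the same family of atoms for the $p(\cdot)$-Hardy space and the $\wz p(\cdot)$-Hardy--Lorentz space, merely by rescaling the coefficients.

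For the inclusion $H_A^{\wz p(\cdot),q}(\rn)\subset(\vh,L^\fz(\rn))_{\theta,q}$, I would start from the atomic decomposition of $f\in H_A^{\wz p(\cdot),q}(\rn)$ given by Theorem \ref{ft1} (applied to $\wz p$), namely $f=\sum_{k\in\zz}\sum_{i\in\nn}\lz_i^k a_i^k$ with $\lz_i^k\sim 2^k\|\chi_{B_i^k}\|_{L^{\wz p(\cdot)}(\rn)}$ and $(\wz p(\cdot),\fz,s)$-atoms $a_i^k$ of bounded overlap. For any $t\in(0,\fz)$ pick the integer $k_0=k_0(t)$ with $2^{k_0}\sim t^{-1/(1-\theta)\cdot(\text{scaling})}$ and split
\begin{equation*}
f=\underbrace{\sum_{k<k_0}\sum_{i}\lz_i^k a_i^k}_{=:g_t}
+\underbrace{\sum_{k\geq k_0}\sum_{i}\lz_i^k a_i^k}_{=:b_t}.
\end{equation*}
Using $\|a_i^k\|_{L^\fz(\rn)}\le\|\chi_{B_i^k}\|_{L^{\wz p(\cdot)}(\rn)}^{-1}$ together with the bounded overlap of $\{B_i^k\}_i$ shows $\|g_t\|_{L^\fz(\rn)}\ls 2^{k_0}$, whereas rescaling the atoms via the identity above turns $\{a_i^k\}$ into $(p(\cdot),\fz,s)$-atoms with coefficients $\wz\lz_i^k\sim 2^k\|\chi_{B_i^k}\|_{L^{p(\cdot)}(\rn)}^{1/(1-\theta)}$; the atomic characterization of $\vh$ from \cite[Theorem~4.3(i)]{zsy16} combined with Lemma \ref{fl1} then controls $\|b_t\|_{\vh}$. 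This yields $K(t,f;\vh,L^\fz(\rn))\ls \|b_t\|_{\vh}+t\|g_t\|_{L^\fz(\rn)}$, and the passage from the quasi-norm of $f$ in $H_A^{\wz p(\cdot),q}(\rn)$ (expressed via Lemma \ref{sl3*} as an $\ell^q$-sum over dyadic levels $2^{k_0}$) to the $(\theta,q)$ Peetre integral is purely arithmetic once one changes variable $t\leftrightarrow 2^{k_0}$.

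For the converse inclusion $(\vh,L^\fz(\rn))_{\theta,q}\subset H_A^{\wz p(\cdot),q}(\rn)$, I would estimate the Lorentz quasi-norm of the radial grand maximal function of $f$. For each $k\in\zz$, choose a near-optimal decomposition $f=f_1^{(k)}+f_2^{(k)}$ with $\|f_1^{(k)}\|_{\vh}+t_k\|f_2^{(k)}\|_{L^\fz(\rn)}\le 2K(t_k,f;\vh,L^\fz(\rn))$ for $t_k:=2^{-k\theta/(1-\theta)}$ (equivalently, a threshold that kills the $L^\fz$-piece at level $2^k$). Then use the sub-additivity of $M_N^0$ and the fact that $\{x:\ M_N^0(f_2^{(k)})(x)>2^{k-1}\}=\emptyset$ as soon as $\|f_2^{(k)}\|_{L^\fz(\rn)}<2^{k-1}$; the remaining set is contained in $\{M_N^0(f_1^{(k)})>2^{k-1}\}$, whose $\lv$-measure is controlled by Chebyshev on $\vh$ combined with Lemma \ref{fivel4} and the scaling identity above. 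Summing in $k$ via Lemma \ref{sl3*} and matching the resulting $\ell^q$-sum to the $(\theta,q)$ integral finishes this direction.

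The hardest step is Lemma \ref{sixl1}, i.e., establishing rigorously that an arbitrary $f\in H_A^{\wz p(\cdot),q}(\rn)$ admits, for every threshold $2^{k_0}$, a simultaneous ``good/bad'' decomposition whose bad part is quantitatively bounded in $\vh$ with the \emph{correct} dependence on the coefficient tail. Classical arguments based on density of $L^1_{\mathrm{loc}}(\rn)\cap H_A^{\wz p(\cdot),q}(\rn)$ collapse when $q=\fz$ (as already noted in Remark \ref{sr2}(ii)); one must instead run the Calder\'on--Zygmund-type construction used in the proof of Theorem \ref{ft1} directly at the level of $f\ast\phi_{-\nu}$, extract a weak-$\ast$ subsequential limit of the atoms, and control the bad part through the Fefferman--Stein inequality (Lemma \ref{fl1}) together with the atomic characterization of $\vh$ from \cite{zsy16}. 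Once Lemma \ref{sixl1} is in hand, Theorem \ref{sixt1} reduces to the bookkeeping sketched in the previous two paragraphs, and Corollary \ref{sixc1} follows by combining \eqref{e1.2} with \cite[Corollary~4.20]{zsy16} and \cite[Remark~4.2(ii)]{kv14}.
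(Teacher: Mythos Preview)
Your forward inclusion is the paper's route: both rest on Lemma \ref{sixl1}, and the paper carries out the ``bookkeeping'' you allude to by setting
\[
h(\lambda):=\lf\|\chi_{\{M_N(f)>\lambda\}}\r\|_{\lv},\qquad
k(t):=\inf\Bigl\{\ell\in\zz:\ \Bigl[\textstyle\sum_{j\in\zz_+}\{2^j h(2^{j+\ell})\}^{\underline{p}}\Bigr]^{1/\underline{p}}\le t\Bigr\},
\]
obtaining $K(t,f;\vh,L^\fz)\ls 2^{k(t)}t$ and then summing in $\ell^q$ with a case split on whether $(1-\theta)q/\underline{p}\le 1$. Your identification of Lemma \ref{sixl1} as the hard step, and of the mechanism behind it (the atomic construction of Theorem \ref{ft1} run at the level of $f\ast\phi_{-\nu}$, weak-$\ast$ limits, and the atomic characterization of $\vh$ from \cite{zsy16}), matches the paper exactly.

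For the converse inclusion your approach differs from the paper's and, as written, has a gap. The paper does \emph{not} run a level-by-level Chebyshev argument. Instead it observes that $M_N$ is sublinear and bounded from $\vh$ to $\lv$ and from $L^\fz(\rn)$ to $L^\fz(\rn)$; a short pointwise splitting (set $E_i:=\{x:\tfrac12 M_N(f)(x)\le M_N(f_i)(x)\}$) then shows $M_N$ maps $(\vh,L^\fz(\rn))_{\theta,q}$ into $(\lv,L^\fz(\rn))_{\theta,q}$, and the latter space is identified with $L^{\wz p(\cdot),q}(\rn)$ by \cite[Remark~4.2(ii)]{kv14}. This is essentially a one-line reduction to Kempka--Vyb\'iral's result. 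Your direct argument needs, for each $k$, a decomposition $f=f_1^{(k)}+f_2^{(k)}$ with \emph{both} $\|f_2^{(k)}\|_{L^\fz(\rn)}<c\,2^k$ (to empty the level set $\{M_N^0(f_2^{(k)})>2^{k-1}\}$) \emph{and} $\|f_1^{(k)}\|_{\vh}$ controlled by $K(t_k,f)$. But a near-optimal $K$-decomposition at $t_k$ only yields $\|f_2^{(k)}\|_{L^\fz(\rn)}\le 2K(t_k,f)/t_k$, which need not be $\ls 2^k$ for your stated $t_k$; if it fails, the level set is all of $\rn$ and its $L^{\wz p(\cdot)}$ norm is infinite. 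This can be repaired by passing to the $E$-functional $E(\lambda,f):=\inf\{\|f-g\|_{\vh}:\|g\|_{L^\fz(\rn)}\le\lambda\}$ and invoking the equivalence of $K$- and $E$-method norms, but that is extra machinery the paper's soft argument avoids entirely.
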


As a consequence of Theorem \ref{sixt1}, \cite[Corollary 4.20]{zsy16}
and \cite[Remark 4.2(ii)]{kv14},
we immediately obtain the following conclusion, the details being omitted.

\begin{corollary}\label{sixc1}
Let $p(\cdot)\in C^{\log}(\rn)$. If $p_-\in(1,\fz)$ and $q\in(0,\fz]$, then
$\vhlpq=\vlpq$ with equivalent quasi-norm.
\end{corollary}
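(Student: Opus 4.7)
The plan is to derive the corollary by interposing an auxiliary exponent that places us in the range where Theorem \ref{sixt1} can be applied starting from a space already known to coincide with a variable Lebesgue space. Specifically, I would chain together three ingredients: Theorem \ref{sixt1}, the identification $\vh=\lv$ from \cite[Corollary 4.20]{zsy16} valid whenever $p_-\in(1,\fz)$, and the real interpolation formula of Kempka--Vyb\'iral \cite[Remark 4.2(ii)]{kv14} for variable Lorentz spaces.

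Fix $p(\cdot)\in C^{\log}(\rn)$ with $p_-\in(1,\fz)$ and pick any $\theta\in(0,1-1/p_-)$. Define the auxiliary exponent
\begin{equation*}
p_0(\cdot):=(1-\theta)p(\cdot).
\end{equation*}
Being a constant multiple of $p(\cdot)$, the function $p_0(\cdot)$ belongs to $C^{\log}(\rn)$, and the choice of $\theta$ ensures $(p_0)_-=(1-\theta)p_->1$. Moreover, by construction, $\frac{1}{p(\cdot)}=\frac{1-\theta}{p_0(\cdot)}$, so Theorem \ref{sixt1} (with input exponent $p_0(\cdot)$ and output exponent $p(\cdot)$) gives
\begin{equation*}
\lf(H_A^{p_0(\cdot)}(\rn),\lfz\r)_{\theta,q}=\vhlpq
\end{equation*}
with equivalent quasi-norms.

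Since $(p_0)_-\in(1,\fz)$, \cite[Corollary 4.20]{zsy16} yields $H_A^{p_0(\cdot)}(\rn)=L^{p_0(\cdot)}(\rn)$ with equivalent norms, and this identification transfers through the real interpolation functor $(\cdot,\cdot)_{\theta,q}$, which depends only on the equivalence class of the quasi-norm on each endpoint space. Therefore
\begin{equation*}
\lf(L^{p_0(\cdot)}(\rn),\lfz\r)_{\theta,q}=\vhlpq.
\end{equation*}
On the other hand, \cite[Remark 4.2(ii)]{kv14}, applied with the same $\theta$ and the same relation $\frac{1}{p(\cdot)}=\frac{1-\theta}{p_0(\cdot)}$, identifies the left-hand side with $L^{p(\cdot),q}(\rn)=\vlpq$. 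Combining the two displays concludes the proof.

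No single step here is a serious obstacle; this is a genuine chaining argument. The only points warranting care are verifying that the parameter conventions of Theorem \ref{sixt1} and those of \cite[Remark 4.2(ii)]{kv14} match up along the rule $\frac{1}{p(\cdot)}=\frac{1-\theta}{p_0(\cdot)}$, and confirming that the endpoint coincidence $H_A^{p_0(\cdot)}(\rn)=L^{p_0(\cdot)}(\rn)$ is an equivalence of quasi-norms (which is automatic here since $(p_0)_-\in(1,\fz)$ makes both spaces Banach, and the standard interpolation theory then preserves the equivalence).
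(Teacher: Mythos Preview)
Your proposal is correct and matches the paper's approach exactly: the paper states (without giving details) that Corollary \ref{sixc1} follows from Theorem \ref{sixt1}, \cite[Corollary 4.20]{zsy16}, and \cite[Remark 4.2(ii)]{kv14}, and your introduction of the auxiliary exponent $p_0(\cdot)=(1-\theta)p(\cdot)$ with $\theta\in(0,1-1/p_-)$ is precisely the way to chain these three ingredients so that the input of Theorem \ref{sixt1} still satisfies $(p_0)_->1$.
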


\begin{remark}\label{sixr1}
\begin{enumerate}
\item[(i)] When $p(\cdot)\equiv p\in(0,1]$, Theorem \ref{sixt1} goes back
to \cite[Lemma 6.3]{lyy16}, which states that, for any $\theta\in(0,1)$
and $q\in(0,\fz]$,
$$(H_A^{p}(\rn),L^\fz(\rn))_{\theta,q}=H_A^{p/(1-\theta),q}(\rn).$$

\item[(ii)] When $p(\cdot)\equiv p\in(1,\fz)$, Theorem \ref{sixt1} coincides
with \cite[Remark 6.7]{lyy16} (see also \cite[Theorem 7]{rs73}), namely,
for any $\theta\in(0,1)$ and $q\in(0,\fz]$,
$$(L^p(\rn),L^\fz(\rn))_{\theta,q}=L^{p/(1-\theta),q}(\rn).$$

\item[(iii)] Let $A:=d\,{\rm I}_{n\times n}$ for some $d\in\rr$
with $|d|\in(1,\fz)$. Then $H^{p}_A(\rn)$ and $H_A^{p/(1-\theta),q}(\rn)$
in (i) of this remark become the classical isotropic Hardy and
Hardy-Lorentz spaces, respectively. In this case, the result in (i) of this remark is just
\cite[Theorem 1]{frs74}. In addition, $\vh$ and $H_A^{\wz p(\cdot),q}(\rn)$
in Theorem \ref{sixt1} become  the classical isotropic variable Hardy
and Hardy-Lorentz spaces, respectively. In this case,
Theorem \ref{sixt1} includes the result in \cite[Theorem 1.5]{zyy17}
as a special case and Theorem \ref{sixt1} with $p_-\in(1,\fz)$
coincides with \cite[Remark 4.2(ii)]{kv14}.
\end{enumerate}
\end{remark}

To prove Theorem \ref{sixt1}, we need the following technical lemma,
which decomposes any distribution $f$ from the anisotropic variable
Hardy-Lorentz space $f\in H_A^{\wz p(\cdot),q}(\rn)$ into ``good" and ``bad" parts
and plays a key role in the proof of Theorem \ref{sixt1}.

\begin{lemma}\label{sixl1}
Let $\theta\in(0,1)$, $q\in(0,\fz]$, $p(\cdot)$ and $\wz p(\cdot)$
be as in Theorem \ref{sixt1}.
Then, for any $f\in H_A^{\wz p(\cdot),q}(\rn)$ and $k\in\zz$,
there exist $g_k\in L^\fz(\rn)$
and $b_k\in\cs'(\rn)$ such that $f=g_k+b_k$ in $\cs'(\rn)$,
$\|g_k\|_{L^\fz(\rn)}\le C2^k$
and
\begin{equation}\label{sixe2}
\lf\|b_k\r\|_{H_A^{p(\cdot)}(\rn)}
\le \widetilde{C}\lf\|M_N(f)\chi_{\{x\in\rn:\ M_N(f)(x)>2^k\}}\r\|_{\lv}<\fz,
\end{equation}
where, for any
$N\in\mathbb{N}\cap[\lfloor(\frac1{\underline{p}}-1)
\frac{\ln b}{\ln\lambda_-}\rfloor+2,\fz)$
with $\underline{p}$ as in \eqref{se4},
$M_N(f)$ is as in \eqref{se8},
$C$ and $\widetilde{C}$ are two positive constants independent of $f$ and $k$.
\end{lemma}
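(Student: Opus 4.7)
The plan is to build the decomposition directly from the atomic decomposition of $H_A^{\wz p(\cdot),q}(\rn)$ given by Theorem \ref{ft1}, splitting the atomic sum at the level $k$. First I would apply Theorem \ref{ft1} with $r=\fz$ (admissible by Definition \ref{fd1}) to write
$$f = \sum_{j\in\zz}\sum_{i\in\nn}\lz_i^j a_i^j\quad\text{in }\cs'(\rn),$$
where each $a_i^j$ is a $(\wz p(\cdot),\fz,s)$-atom supported on a dilated ball $B_i^j$ and $\lz_i^j\sim 2^j\|\chi_{B_i^j}\|_{L^{\wz p(\cdot)}(\rn)}$. Tracing Step 1 of the proof of Theorem \ref{ft1}, the balls $B_i^j$ arise from a Whitney-type decomposition of $\Omega_j:=\{x\in\rn:\,M_N(f)(x)>2^j\}$, so that $B_i^j\st\Omega_j$ and $\sum_{i\in\nn}\chi_{B_i^j}\ls 1$ uniformly in $j$. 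Then set
$$g_k:=\sum_{j<k}\sum_{i\in\nn}\lz_i^j a_i^j,\qquad b_k:=\sum_{j\ge k}\sum_{i\in\nn}\lz_i^j a_i^j.$$

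The $L^\fz$ bound on $g_k$ is essentially immediate: the size condition $\|a_i^j\|_{L^\fz(\rn)}\le\|\chi_{B_i^j}\|_{L^{\wz p(\cdot)}(\rn)}^{-1}$ combined with the value of $\lz_i^j$ gives $|\lz_i^ja_i^j|\ls 2^j\chi_{B_i^j}$, whence the bounded-overlap property plus a geometric sum yield $|g_k|\ls\sum_{j<k}2^j\ls 2^k$; this simultaneously ensures pointwise a.e.\ and $\cs'(\rn)$ convergence of the series defining $g_k$.

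For the $\vh$-bound on $b_k$, I would first pass from $\wz p(\cdot)$-atoms to $p(\cdot)$-atoms by the renormalisation $a_i^j=\frac{\|\chi_{B_i^j}\|_\lv}{\|\chi_{B_i^j}\|_{L^{\wz p(\cdot)}(\rn)}}\wz a_i^j$, where $\wz a_i^j$ is a $(p(\cdot),\fz,s)$-atom by Definition \ref{fd1}; thus $b_k=\sum_{j\ge k}\sum_i\mu_i^j\wz a_i^j$ with $\mu_i^j\sim 2^j\|\chi_{B_i^j}\|_\lv$. Invoking the atomic characterization of $\vh$ from \cite[Theorem 4.3(i)]{zsy16} and applying Lemma \ref{fl1} as in Remark \ref{fr1}(i), I would obtain
\begin{equation*}
\|b_k\|_\vh^{\underline p}\ls\lf\|\sum_{j\ge k}2^{j\underline p}\sum_{i\in\nn}\chi_{B_i^j}\r\|_{L^{p(\cdot)/\underline p}(\rn)}.
\end{equation*}
Since $B_i^j\st\Omega_j\st\Omega_k$ and, for each $x\in\Omega_k$, the set $\{j\ge k:\,x\in\Omega_j\}$ is a consecutive interval $[k,J(x)]$ with $2^{J(x)}\sim M_N(f)(x)$, the geometric sum in $j$ is dominated pointwise by a constant multiple of $[M_N(f)(x)]^{\underline p}\chi_{\Omega_k}(x)$, which yields \eqref{sixe2}. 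Finiteness of $\|M_N(f)\chi_{\Omega_k}\|_\lv$ then follows from $f\in H_A^{\wz p(\cdot),q}(\rn)$ via $\wz p(\cdot)=p(\cdot)/(1-\theta)$ and Lemma \ref{sl3*}.

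The main obstacle will be packaging the atomic-norm estimate of $b_k$ into $\|M_N(f)\chi_{\{M_N(f)>2^k\}}\|_\lv$ with the correct constants. This step relies on carrying the Whitney-type geometric information $B_i^j\st\Omega_j$ explicitly through the construction in Theorem \ref{ft1} and exploiting the nested structure $\cdots\supset\Omega_j\supset\Omega_{j+1}\supset\cdots$ to perform a clean geometric-series summation; the $\wz p(\cdot)\to p(\cdot)$ atomic transfer is routine once Definition \ref{fd1} is read with the right normalisation, and the identity $f=g_k+b_k$ in $\cs'(\rn)$ follows from the continuous inclusion $\vh\hookrightarrow\cs'(\rn)$.
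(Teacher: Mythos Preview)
Your proposal is correct and follows essentially the same route as the paper's proof: split the atomic decomposition from (the construction inside) Theorem \ref{ft1} at level $k$, bound $g_k$ in $L^\infty$ by geometric summation over the Whitney layers, and bound $b_k$ in $\vh$ via \cite[Theorem 4.3(i)]{zsy16} together with the pointwise estimate $\sum_{j\ge k}2^{j\underline p}\chi_{\Omega_j}\ls [M_N(f)]^{\underline p}\chi_{\Omega_k}$ and the finiteness argument based on $p(\cdot)=(1-\theta)\wz p(\cdot)$.

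Two minor remarks. First, your passage through $(\wz p(\cdot),\fz,s)$-atoms and subsequent renormalisation to $(p(\cdot),\fz,s)$-atoms is unnecessary: the functions $h_i^j$ produced in Steps 1--2 of the proof of Theorem \ref{ft1} satisfy support, $L^\infty$ and moment conditions that are \emph{exponent-free}, so the paper simply sets $\lambda_i^j\sim 2^j\|\chi_{B_i^j}\|_{\lv}$ directly and avoids the detour. Second, when you do carry out the renormalisation, make sure the moment order $s$ (and the $N$ already fixed by the statement) is chosen to satisfy the $p(\cdot)$-constraint \eqref{fe1}, not merely the weaker $\wz p(\cdot)$-constraint, since $\wz p_->p_-$; Theorem \ref{ft1} allows this choice, but it should be made explicit.
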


\begin{proof}
Let all the notation be the same as those used in the proof of Theorem \ref{ft1}.
For any $f\in H_A^{\wz p(\cdot),q}(\rn)$, $k\in\mathbb{Z}$ and
$N\in\mathbb{N}\cap[\lfloor(\frac1{\underline{p}}-1)\frac{\ln b}{\ln
\lambda_-}\rfloor+2,\fz)$,
let
$$\Omega_k:=\lf\{x\in\rn:\ M_N(f)(x)>2^k\r\}.$$
Then, for any $k_0\in\zz$, by an argument similar
to that used in the proof of \eqref{fe13}, we have
\begin{align*}
f=\sum_{k\in\zz}\sum_{i\in\nn}h_i^k
=\sum_{k=-\fz}^{k_0}\sum_{i\in\nn}h_i^k
+\sum_{k=k_0+1}^\fz\sum_{i\in\nn}\cdots=:g_{k_0}+b_{k_0}
\qquad{\rm in}\ \cs'(\rn),
\end{align*}
where, for any $k\in\zz$ and $i\in\nn$, $h_i^k$ is a
$(p(\cdot),\fz,s)$-atom multiplied by a constant  and satisfies that
\begin{align}\label{sixe3}
\supp h_i^k\subset (x_i^k+B_{\ell_i^k+4\tau}),
\end{align}
\begin{align}\label{sixe4}
\lf\|h_i^k\r\|_{L^{\infty}(\rn)}\ls2^k
\end{align}
and
\begin{align}\label{sixe5}
\int_\rn h_i^k(x)Q(x)dx=0
\qquad {\rm for\ any}\ Q\in\mathcal{P}_{m}(\rn).
\end{align}

By the finite intersection property of
$\{x_i^k+B_{\ell_i^k+4\tau}\}_{i\in\nn}$
for each $k\in\zz$ (see \eqref{fe18}), \eqref{sixe3}
and \eqref{sixe4}, we conclude that, for any $k_0\in\zz$,
\begin{align*}
\lf\|g_{k_0}\r\|_{\lfz}
\ls\sum_{k=-\fz}^{k_0}2^k\sim2^{k_0}.
\end{align*}

On the other hand, for any $k\in\zz$ and $i\in\nn$, let
$a_i^k:=(\lambda_i^k)^{-1}h_i^k$, where
$$\lambda_i^k\sim 2^k\lf\|\chi_{\xik+B_{\ell_i^k+4\tau}}\r\|_{\lv}.$$
Then, by this, \eqref{sixe3}, \eqref{sixe4} and \eqref{sixe5},
we know that, for any $k\in\zz$ and $i\in\nn$, $a_i^k$ is a
$(p(\cdot),\fz,s)$-atom. Therefore, by the finite intersection property of
$\{x_i^k+B_{\ell_i^k+4\tau}\}_{i\in\nn}$
for each $k\in\zz$ again and the fact that
$\Omega_k=\bigcup_{i\in\mathbb{N}}(x_i^k+B_{\ell_i^k})$
(see \eqref{fe16}), we find that
\begin{align}\label{sixe6}
&\lf\|\lf\{\sum_{k=k_0+1}^\fz\sum_{i\in\nn}
\lf[\frac{\lik\chi_{x_i^k+B_{\ell_i^k+4\tau}}}
{\|\chi_{x_i^k+B_{\ell_i^k+4\tau}}\|_\lv}\r]^{\underline{p}}
\r\}^{\frac1{\underline{p}}}\r\|_{\lv}\\
&\hs\ls \lf\|\lf(\sum_{k=k_0+1}^\fz\sum_{i\in\nn}2^{k\underline{p}}
\chi_{x_i^k+B_{\ell_i^k+4\tau}}\r)^{\frac1{\underline{p}}}\r\|_{\lv}\noz\\
&\hs\ls \lf\|\lf(\sum_{k=k_0+1}^\fz2^{k\underline{p}}
\chi_{\Omega_k}\r)^{\frac1{\underline{p}}}\r\|_{\lv}
\sim \lf\|\lf[\sum_{k=k_0+1}^\fz\lf(2^{k}
\chi_{\Omega_k\backslash \Omega_{k+1}}\r)^{\underline{p}}
\r]^{\frac1{\underline{p}}}\r\|_{\lv}\noz\\
&\hs\ls \lf\|M_{N}(f)\lf(\sum_{k=k_0+1}^\fz
\chi_{\Omega_k\backslash \Omega_{k+1}}
\r)^{\frac1{\underline{p}}}\r\|_{\lv}
\sim \lf\|M_{N}(f)\chi_{\Omega_{k_0+1}}\r\|_{\lv}\noz\\
&\hs\ls \lf\|M_{N}(f)\chi_{\{x\in\rn:\ M_{N}(f)(x)>2^{k_0}\}}\r\|_{\lv}.\noz
\end{align}
Moreover, from Remark \ref{sr1}(i) and Lemma \ref{sl3*},
it follows that
\begin{align}\label{sixe7}
&\lf\|M_{N}(f)\chi_{\{x\in\rn:\ M_{N}(f)(x)>2^{k_0}\}}\r\|_{\lv}\\
&\hs\le\lf\{\sum_{j\in\zz_+}\lf\|\lf[M_N(f)\r]^{\underline{p}}
\chi_{\{x\in\rn:\ 2^{j+k_0}<M_N(f)(x)\le 2^{j+k_0+1}\}}\r\|
_{L^{\frac{p(\cdot)}{\underline{p}}}(\rn)}\r\}
^{\frac1{\underline{p}}}\noz\\
&\hs\ls \lf[\sum_{j\in\zz_+}(2^{j+k_0})^{\underline{p}}
\lf\|\chi_{\{x\in\rn:\ M_N(f)(x)>2^{j+k_0}\}}\r\|_{\lv}^{\underline{p}}
\r]^{\frac1{\underline{p}}}\noz\\
&\hs\sim \lf\{\sum_{j\in\zz_+}(2^{j+k_0})^{-\theta\underline{p}/(1-\theta)}
\lf[2^{j+k_0}\lf\|\chi_{\{x\in\rn:\ M_N(f)(x)>2^{j+k_0}\}}
\r\|_{L^{\wz p(\cdot)}(\rn)}\r]^{\underline{p}/(1-\theta)}
\r\}^{\frac1{\underline{p}}}\noz\\
&\hs\ls \lf\|M_N(f)\r\|_{L^{\wz p(\cdot),\fz}(\rn)}^{\frac1{1-\theta}}
\lf[\sum_{j\in\zz_+}\lf(2^{j+k_0}\r)^{-\theta\underline{p}/(1-\theta)}\r]
^{\frac1{\underline{p}}}\noz\\
&\hs\sim \lf(2^{k_0}\r)^{-\theta/(1-\theta)}
\|f\|_{H_A^{\wz p(\cdot),\fz}(\rn)}^{\frac1{1-\theta}}
\ls\lf(2^{k_0}\r)^{-\theta/(1-\theta)}
\|f\|_{H_A^{\wz p(\cdot),q}(\rn)}^{\frac1{1-\theta}}<\fz.\noz
\end{align}
Observe that $(\rn,\,\rho,\,dx)$ is an RD-space (see \cite{hmy08,zy11}).
From this,
\cite[Theorem 4.3(i)]{zsy16}, \eqref{sixe6} and \eqref{sixe7},
we further deduce that
\begin{align*}
\lf\|b_{k_0}\r\|_{\vh}
&\ls\lf\|\lf\{\sum_{k=k_0+1}^\fz\sum_{i\in\nn}
\lf[\frac{\lik\chi_{x_i^k+B_{\ell_i^k+4\tau}}}
{\|\chi_{x_i^k+B_{\ell_i^k+4\tau}}\|_\lv}\r]^{\underline{p}}
\r\}^{\frac1{\underline{p}}}\r\|_{\lv}\\
&\ls\lf\|M_{N}(f)\chi_{\{x\in\rn:\ M_{N}(f)(x)>2^{k_0}\}}\r\|_{\lv}.
\end{align*}
This finishes the proof of Lemma \ref{sixl1}.
\end{proof}

Now we prove Theorem \ref{sixt1}.

\begin{proof}[Proof of Theorem \ref{sixt1}]
We first prove that
\begin{equation}\label{sixe8}
H_A^{\wz p(\cdot),q}(\rn)\st (\vh,L^\fz(\rn))_{\theta,q}.
\end{equation}

To this end, let $f\in H_A^{\wz p(\cdot)q}(\rn)$. Then,
by Lemma \ref{sixl1}, we know that, for any $k\in\zz$,
there exist $g_k\in L^\fz(\rn)$ and $b_k\in \vh$
such that $f= g_k+b_k$ in $\cs'(\rn)$, $\|g_k\|_{L^\fz(\rn)}\ls2^k$ and
$b_k$ satisfies \eqref{sixe2}. By this and a proof similar to that of
\cite[(3.3)]{zyy17}, we find that, for any $t\in(0,\fz)$,
\begin{align}\label{sixe9}
K(t,f;\vh,L^\fz(\rn))\ls 2^{k(t)}t,
\end{align}
where, for any $t\in(0,\fz)$,
$$k(t):=\inf\lf\{\ell\in\zz:\ \lf[\sum_{j\in\zz_+}
\lf\{2^j h(2^{j+\ell})\r\}^{\underline{p}}
\r]^{\frac1{\underline{p}}}\le t\r\}$$
and, for any $\lz\in(0,\fz)$,
$$h(\lz):=\lf\|\chi_{\{x\in\rn:\ M_N(f)(x)>\lz\}}\r\|_{\lv}.$$

On the other hand, by \eqref{sixe9},
it is easy to see that, for any $\theta\in(0,1)$ and $q\in(0,\fz]$,
\begin{align}\label{sixe10}
&\lf\{\int_0^\fz t^{-\theta q}
\lf[K(t,f;\vh,L^\fz(\rn))\r]^q\,\frac{dt}t\r\}^{1/q}\\
&\hs\ls\lf[\sum_{\ell\in\zz}2^{\ell q}\int_{\{t\in(0,\fz):\
2^\ell<2^{k(t)}\le2^{\ell+1}\}}t^{(1-\theta)q}\,\frac{dt}t\r]^{1/q}
\ls\lf[\sum_{\ell\in\zz}2^{\ell q}\int_0^{\{\sum_{j\in\zz_+}
[2^j h(2^{j+\ell})]^{\underline{p}}\}^{\frac1{\underline{p}}}}
t^{(1-\theta)q}\,\frac{dt}t\r]^{1/q}\noz\\
&\hs\sim\lf[\sum_{\ell\in\zz}2^{\ell q}\lf\{\sum_{j\in\zz_+}
\lf[2^j h(2^{j+\ell})\r]^{\underline{p}}\r\}
^{\frac{(1-\theta)q}{\underline{p}}}\r]^{1/q}=:C{\rm J},\noz
\end{align}
where $C$ is a positive constant independent of $f$. Next we estimate
${\rm I}$ by considering two cases.

\emph{Case 1.} $\frac{(1-\theta)q}{\underline{p}}\in(0,1]$. For this case,
by the well-known inequality that,
for any $d\in(0,1]$ and $\{a_i\}_{i\in\nn}\st\cc$,
$$\lf(\sum_{i\in\nn}|a_i|\r)^d\le \sum_{i\in\nn}|a_i|^d,$$
we obtain
\begin{align}\label{sixe11}
{\rm J}^q&\le\sum_{\ell\in\zz}2^{\ell q}\sum_{j\in\zz_+}2^{j(1-\theta)q}
\lf[h(2^{j+\ell})\r]^{(1-\theta)q}
=\sum_{\ell\in\zz}\sum_{j\in\zz_+}2^{(\ell-j)q}2^{j(1-\theta)q}
\lf[h(2^{\ell})\r]^{(1-\theta)q}\\
&=\sum_{\ell\in\zz}2^{\ell q}\lf[h(2^{\ell})\r]^{(1-\theta)q}
\sum_{j\in\zz_+}2^{-j\theta q}
\sim\sum_{\ell\in\zz}2^{\ell q}\lf[h(2^{\ell})\r]^{(1-\theta)q}.\noz
\end{align}

\emph{Case 2.} $\frac{(1-\theta)q}{\underline{p}}\in(1,\fz]$. For this case,
let $\eta:=\frac{(1-\theta)q}{\underline{p}}$. Then, from the H\"older inequality,
it follows that, for any $\dz\in(0,\frac{\theta}{1-\theta})$,
\begin{align}\label{sixe12}
{\rm J}&\le\lf\{\sum_{\ell\in\zz}2^{\ell q}\lf(\sum_{j\in\zz_+}
2^{-\dz\underline{p}j\eta'}\r)^{\frac{\eta}{\eta'}}
\sum_{j\in\zz_+}2^{(1+\dz)\underline{p}j\eta}\lf[
h(2^{j+\ell})\r]^{\underline{p}\eta}\r\}^{1/q}\\
&\ls\lf\{\sum_{\ell\in\zz}2^{\ell q}
\sum_{j\in\zz_+}2^{(1+\dz)(1-\theta)jq}\lf[
h(2^{j+\ell})\r]^{(1-\theta)q}\r\}^{1/q}\noz\\
&\sim\lf\{\sum_{\ell\in\zz}2^{\ell q}\lf[
h(2^{j+\ell})\r]^{(1-\theta)q}
\sum_{j\in\zz_+}2^{[\dz(1-\theta)-\theta]jq}\r\}^{1/q}\sim\lf\{\sum_{\ell\in\zz}2^{\ell q}
\lf[h(2^{\ell})\r]^{(1-\theta)q}\r\}^{1/q}.\noz
\end{align}

Combining \eqref{sixe10}, \eqref{sixe11} and \eqref{sixe12},
we conclude that
\begin{align*}
&\lf\{\int_0^\fz t^{-\theta q}
\lf[K(t,f;\vh,L^\fz(\rn))\r]^q\,\frac{dt}t\r\}^{1/q}\\
&\hs\ls\lf[\sum_{\ell\in\zz}2^{\ell q}
\lf\|\chi_{\{x\in\rn:\ M_N(f)(x)>2^\ell\}}\r\|
_{\lv}^{(1-\theta)q}\r]^{1/q}\sim\lf[\sum_{\ell\in\zz}2^{\ell q}
\lf\|\chi_{\{x\in\rn:\ M_N(f)(x)>2^\ell\}}\r\|
_{L^{\wz p(\cdot)}(\rn)}^{q}\r]^{1/q}\\
&\hs\sim\lf\|M_N(f)\r\|_{L^{\wz p(\cdot),q}(\rn)}
\sim\|f\|_{H_A^{\wz p(\cdot),q}(\rn)}
\end{align*}
with the usual modification made when $q=\fz$,
which implies that $$f\in(\vh,L^\fz(\rn))_{\theta,q}$$ and
hence completes the proof of \eqref{sixe8}.

Conversely, we need to show that
\begin{equation}\label{sixe13}
(\vh,L^\fz(\rn))_{\theta,q}\st H_A^{\wz p(\cdot),q}(\rn).
\end{equation}
To this end, we claim that $M_N$ is bounded from the space
$(\vh,L^\fz(\rn))_{\theta,q}$ to
the space $(\lv,L^\fz(\rn))_{\theta,q}$.
Indeed, let $f\in (\vh,L^\fz(\rn))_{\theta,q}$. Then, by definition,
we know that there exist
$f_1\in \vh$ and $f_2\in L^\fz(\rn)$ such that
\begin{equation}\label{sixe14}
\lf\{\int_0^\fz t^{-\theta q}
\lf[\|f_1\|_{\vh}+t\|f_2\|_{L^\fz(\rn)}\r]^q\,\frac{dt}t\r\}^{1/q}
\ls \|f\|_{(\vh,L^\fz(\rn))_{\theta,q}}.
\end{equation}
On the other hand, since $M_N$ is bounded from
$\vh$ to $\lv$ and also from $L^\fz(\rn)$ to $L^\fz(\rn)$,
it follows that $M_N(f_1)\in \lv$ and $M_N(f_2)\in L^\fz(\rn)$.
For any $i\in\{1,2\}$, let
$$E_i:=\lf\{x\in\rn:\ \frac 12 M_N(f)(x)\le M_N(f_i)(x)\r\}.$$
Then $\rn=E_1\cup E_2$. Thus, we have
$$M_N(f)=M_N(f)\chi_{E_1}+M_N(f)
\chi_{E_2\backslash E_1}\in \lv+L^\fz(\rn),$$
which, combined with \eqref{sixe14}, further implies that
\begin{align*}
&\lf\|M_N(f)\r\|_{(\lv,L^\fz(\rn))_{\theta,q}}\\
&\hs\le\lf\{\int_0^\fz t^{-\theta q}\lf[\lf\|M_N(f)
\chi_{E_1}\r\|_{\lv}+t\lf\|M_N(f)\chi_{E_2\backslash E_1}
\r\|_{\lfz}\r]^q\,\frac{dt}t\r\}^{1/q}\\
&\hs\ls\lf\{\int_0^\fz t^{-\theta q}\lf[\lf\|M_N(f_1)
\r\|_{\lv}+t\lf\|M_N(f_2)\r\|_{\lfz}\r]^q\,\frac{dt}t\r\}^{1/q}\\
&\hs\ls\lf\{\int_0^\fz t^{-\theta q}\lf[\|f_1\|_{\vh}
+t\|f_2\|_{\lfz}\r]^q\,\frac{dt}t\r\}^{1/q}
\ls\|f\|_{(\vh,L^\fz(\rn))_{\theta,q}}
\end{align*}
with the usual modification made when $q=\fz$.
Therefore, the above claim holds true.

By this claim and \cite[Remark 4.2(ii)]{kv14}, we find that,
if $f\in (\vh,L^\fz(\rn))_{\theta,q}$, then
$M_N(f)$ belongs to $L^{\wz p(\cdot),q}(\rn)$, namely,
$f\in H_A^{\wz p(\cdot),q}(\rn)$. Thus, \eqref{sixe13} holds true.
This finishes the proof of Theorem \ref{sixt1}.
\end{proof}

\bigskip

\noindent  Jun Liu, Dachun Yang (Corresponding author) and Wen Yuan

\medskip

\noindent  School of Mathematical Sciences, Beijing Normal University,
Laboratory of Mathematics and Complex Systems, Ministry of
Education, Beijing 100875, People's Republic of China

\smallskip

\noindent {\it E-mails}: \texttt{junliu@mail.bnu.edu.cn} (J. Liu)

\hspace{1cm}\texttt{dcyang@bnu.edu.cn} (D. Yang)

\hspace{1cm}\texttt{wenyuan@bnu.edu.cn} (W. Yuan)

\end{document}